\documentclass{siamltex}
\usepackage{amsfonts}
\usepackage{amssymb}
\usepackage{amsfonts}
\usepackage{mathtools}
\usepackage{color}

\newcommand{\R}{{\mathbb R}}

\newcommand{\N}{{\mathbb N}}
\newcommand{\K}{{\mathbb K}}
\newcommand{\C}{{\mathbb C}}
\renewcommand{\H}{{\mathbb H}}
\newcommand{\G}{{\mathbb G}}

\newcommand{\A}{{\mathbb A}}

\newcommand{\bx}{\mathbf{x}}
\newcommand{\bb}{\mathbf{b}}

\newcommand{\br}{\mathbf{r}}

\newcommand{\bv}{\mathbf{v}}
\newcommand{\bw}{\mathbf{w}}
\newcommand{\be}{\mathbf{e}}

\newcommand{\bs}{\mathbf{s}}

\newcommand{\by}{\mathbf{y}}
\newcommand{\bz}{\mathbf{z}}

\newcommand{\bzero}{\mathbf{0}}
\newcommand{\kP}{k_{\rm P}}
\newcommand{\kprec}{k_{\rm P}}

\newcommand{\spn}{\textrm{span}}

\newcommand{\el}{\end{list}}
\newtheorem{algorithm}[theorem]{Algorithm}

\newcommand{\balg}{\begin{algorithm}}
\newcommand{\ealg}{\end{algorithm}}
\newcommand{\bl}{\begin{list}{\ }{
\leftmargin=.325in}}

\title{Arnoldi decomposition, GMRES, and preconditioning \\ for linear discrete ill-posed 
problems}\medskip
\author{
Silvia Gazzola\thanks{Department of Mathematical Sciences, University of Bath, Bath 
BA2 7AY, United Kingdom. E-mail: {\tt s.gazzola@bath.ac.uk}.}
\and
Silvia Noschese\thanks{Dipartimento di Matematica ``Guido Castelnuovo'', SAPIENZA 
Universit\`a di Roma, P.le A. Moro, 2, I-00185 Roma, Italy. E-mail: 
{\tt noschese@mat.uniroma1.it}. Research partially supported by a grant from SAPIENZA 
Universit\`a di Roma and by INdAM-GNCS.}
\and
Paolo Novati\thanks{Dipartimento di Matematica e Geoscienze, Universit\`a di Trieste, via 
Valerio, 12/1, I-34127 Trieste, Italy. E-mail: {\tt novati@units.it}. Research partially 
supported by INdAM-GNCS and FRA-University of Trieste.}
\and
Lothar Reichel\thanks{Department of Mathematical Sciences, Kent State University, Kent,
OH 44242, USA. E-mail: {\tt reichel@math.kent.edu}. Research partially supported by 
NSF grants DMS-1720259 and DMS-1729509.}
}
\begin{document}

\maketitle

\begin{abstract}
GMRES is one of the most popular iterative methods for the solution of large linear 
systems of equations that arise from the discretization of linear well-posed problems,
such as Dirichlet boundary value problems for elliptic partial differential equations. The method
is also applied to iteratively solve linear systems of equations that are obtained by 
discretizing linear ill-posed problems, such as many inverse problems. However, GMRES does not always perform well when 
applied to the latter kind of problems. This paper seeks to shed some light on reasons for
the poor performance of GMRES in certain situations, and discusses some remedies based on 
specific kinds of preconditioning. The standard implementation of GMRES is based on the 
Arnoldi process, which also can be used to define a solution subspace for Tikhonov or TSVD 
regularization, giving rise to the Arnoldi--Tikhonov and Arnoldi-TSVD methods, 
respectively. The performance of the GMRES, the Arnoldi--Tikhonov, and the 
Arnoldi-TSVD methods is discussed. Numerical examples illustrate properties of these
methods.
\end{abstract}

\keywords
linear discrete ill-posed problem, Arnoldi process, GMRES, truncated iteration, 
Tikhonov regularization, truncated singular value decomposition
\endkeywords


\section{Introduction}\label{sec1}
This paper considers the solution of linear systems of equations
\begin{equation}\label{linsys}
A \bx = \bb,\qquad A\in{\C}^{m\times m},\qquad \bx,\bb\in{\C}^m, 
\end{equation}
with a large matrix $A$ with many ``tiny'' singular values of different orders of 
magnitude. In particular, $A$ is severely ill-conditioned and may be rank-deficient. 
Linear systems of equations (\ref{linsys}) with a matrix of this kind are commonly 
referred to as linear discrete ill-posed problems. They arise, for instance, from the 
discretization of linear ill-posed problems, such as Fredholm integral equations of the 
first kind with a smooth kernel. 

In many linear discrete ill-posed problems that arise in science and engineering, the 
right-hand side vector $\bb$ is determined through measurements and is contaminated by a 
measurement error $\be\in\C^m$. Thus,
\begin{equation}\label{btilde}
\bb=\bb_{\rm exact}+\be,
\end{equation}
where $\bb_{\rm exact}\in{\C}^m$ denotes the unknown error-free right-hand side associated 
with $\bb$. We will assume that $\bb_{\rm exact}$ is in the range of $A$, denoted by
${\mathcal R}(A)$, because this facilitates the use of the discrepancy principle to 
determine a suitable value of a regularization parameter; see below for details. The 
error-contaminated right-hand side $\bb$ is not required to be in ${\mathcal R}(A)$. 

We would like to compute the solution of minimal Euclidean norm, $\bx_{\rm exact}$, of the 
consistent linear discrete ill-posed problem 
\begin{equation}\label{nflinsys}
A\bx=\bb_{\rm exact}.
\end{equation}
Since the right-hand side $\bb_{\rm exact}$ is not known, we seek to determine an 
approximation of $\bx_{\rm exact}$ by computing an approximate solution of the available 
linear system of equations (\ref{linsys}). We note that due to the severe ill-conditioning 
of the matrix $A$ and the error $\be$ in $\bb$, the least-squares solution of minimal 
Euclidean norm of (\ref{linsys}) generally is not a useful approximation of 
$\bx_{\rm exact}$. 

A popular approach to determine a meaningful approximation of $\bx_{\rm exact}$ is to 
apply an iterative method to the solution of (\ref{linsys}) and terminate the iterations 
early enough so that the error in $\bb$ is not significantly propagated into the computed 
approximate solution. The most popular iterative methods for the solution of large linear 
discrete ill-posed problems are LSQR by Paige and Saunders 
\cite{EHN,Hankebook,Hansenbook,PS}, which is based on partial Golub--Kahan decomposition 
of $A$, and GMRES \cite{CLR1,CLR2,GNR2}, which is based on partial Arnoldi 
decomposition of $A$. Here ``GMRES'' refers to both the standard GMRES method proposed by
Saad and Schultz \cite{SS} as well as to variants that have been found to perform better 
when applied to the solution of linear discrete ill-posed problems; see, e.g., 
\cite{DR,JH,NRS} for examples. 

The LSQR method requires the evaluation of two matrix-vector products in each iteration,
one with $A$ and one with its conjugate transpose, which we denote by $A^*$. GMRES only 
demands the computation of one matrix-vector product with $A$ per iteration. This 
makes GMRES attractive to use when it is easy to evaluate matrix-vector products with $A$ 
but not with $A^*$. This is, for instance, the case when $A$ approximates a Fredholm 
integral operator of the first kind and matrix-vector products with $A$ are evaluated by a
multipole method. Then $A$ is not explicitly formed and matrix-vector products with $A^*$ 
are difficult to compute; see, e.g., \cite{GR} for a discussion on the multipole method. 
It may be difficult to evaluate matrix-vector products with $A^*$ also when solving 
nonlinear problems and $A$ represents a Jacobian matrix, whose entries are not explicitly 
computed; see \cite{CJ2} for a discussion on such a solution method. 

The fact that GMRES does not require the evaluation of matrix-vector products with $A^*$
leads to that for many linear discrete ill-posed problems (\ref{linsys}), this method
requires fewer matrix-vector product evaluations than LSQR to determine a desired 
approximate solution, see, e.g., \cite{CLRX,CLR4,CLR2} for illustrations, as well as 
\cite{CLR5} for related examples. However, there also are linear discrete ill-posed 
problems (\ref{linsys}), whose solution with LSQR requires fewer matrix-vector product 
evaluations than GMRES, or for which LSQR furnishes a more accurate approximation of 
$\bx_{\rm exact}$ than GMRES; see below for illustrations, as well as \cite{HJ}. Reasons 
for poor performance of GMRES include:
\vskip2mm
\begin{enumerate}
\item
The low-dimensional solution subspaces used by GMRES are poorly suited to represent 
$\bx_{\rm exact}$. It is often not possible to rectify this problem by carrying out many 
iterations, 
since this typically results in severe propagation of the error $\be$ in $\bb$ into the iterates 
determined by GMRES.
\item
The desired solution $\bx_{\rm exact}$ may be approximated accurately in solution 
subspaces generated by GMRES, but the method determines iterates that furnish poor 
approximations of $\bx_{\rm exact}$.
\item
The GMRES iterates suffer from contamination of propagated error due to the fact that the
initial vector in the Arnoldi decomposition used for the solution of (\ref{linsys}) is a 
normalization of the error-contaminated vector $\bb$. 
\end{enumerate}
\vskip2mm

It is the purpose of the present paper to discuss the above mentioned shortcomings of 
GMRES, illustrate situations when they occur, and provide some remedies. Section 
\ref{sec2} defines the Arnoldi process and GMRES, and shows that the solution subspaces 
used by GMRES may be inappropriate. Also LSQR is briefly discussed, and distances to relevant 
classes of matrices are introduced. In Section \ref{sec3}, we define the set of generalized 
Hermitian matrices and the set of generalized Hermitian positive semidefinite matrices. 
The distance of the matrix $A$ in (\ref{linsys}) to these sets sheds light on how quickly 
GMRES applied to the solution of the linear system of equations (\ref{linsys}) will 
converge. Section \ref{sec4} describes ``preconditioning techniques.'' The 
``preconditioners'' discussed do not necessarily reduce the condition number, and they are
not guaranteed to reduce the number of iterations. Instead, they are designed to make the 
matrix of the preconditioned linear system of equations closer the set of generalized 
Hermitian positive semidefinite matrices. This often results in that the computed solution is 
a more accurate approximation of $\bx_{\rm exact}$ than approximate solutions of the 
unpreconditioned linear system (\ref{linsys}).  In Section \ref{sec5} we consider the 
situation when GMRES applied to the solution of (\ref{linsys}) yields poor approximations 
of $\bx_{\rm exact}$, but the solution subspace generated by the Arnoldi process contains 
an accurate approximation of $\bx_{\rm exact}$. We propose to carry out sufficiently many 
steps of the Arnoldi process and compute an approximation of $\bx_{\rm exact}$ in the 
solution subspace generated by Tikhonov regularization or truncated singular value 
decomposition. Both regularization methods allow the use of a solution subspace of larger 
dimension than GMRES.
A few computed examples that illustrate the discussion of the previous sections 
are presented in Section \ref{sec7}, and Section \ref{sec8} contains concluding remarks.

\section{GMRES for linear discrete ill-posed problems}\label{sec2}
GMRES is a popular iterative method for the solution of large linear systems of equations
with a square nonsymmetric matrix (\ref{linsys}) that arise from the discretization of 
well-posed problems; see, e.g., Saad \cite{Saadbook}. The $k$th iterate, $\bx_k$, 
determined by GMRES, when applied to the solution of (\ref{linsys}) with initial iterate 
$\bx_0=\bzero$, satisfies
\begin{equation}\label{gmres1}
\|A\bx_k-\bb\|= \min_{\bx\in{\K}_k(A,\bb)}\|A\bx-\bb\|,\qquad \bx_k\in{\K}_k(A,\bb),
\end{equation}
where
\[
{\K}_k(A,\bb)=\mbox{span}\{\bb,A\bb,\ldots,A^{k-1}\bb\}
\]
is a Krylov subspace and $\|\cdot\|$ denotes the Euclidean vector norm. We tacitly assume 
that $k$ is sufficiently small so that $\mbox{dim}({\K}_k(A,\bb))=k$, which in turn guarantees that the iterate $\bx_k$ is uniquely defined. We will 
throughout this section assume that $1\leq k\ll m$. The standard implementation of GMRES 
\cite{Saadbook,SS} is based on the Arnoldi process, here given with the modified 
Gram--Schmidt implementation.

\vskip5pt
\begin{algorithm}\label{alg:arnoldi}
{\rm\small
{\sc The Arnoldi process}
\begin{tabbing}
mmmmmmm\=mm\=mm\=mm\=mm\=mm\=   \kill
\> 0.\> \bf Input $A\in\C^{m\times m}$, ${\bb}\in\C^m\backslash\{\bzero\}$\\
\> 1.\> $\bv_1:=\bb/\|\bb\|$;\\
\> 2.\> \bf for $j=1,2,\ldots, k$ do \\
\> 3.\> \> $\bw:=A{\bv}_j$; \\
\> 4.\> \> \bf for $i=1,2,\ldots j$ do \\
\> 5.\> \> \>  $h_{i,j}:={\bv}_i^* \bw$; $\bw:=\bw-{\bv}_ih_{i,j}$; \\
\> 6.\> \> \bf end for \\
\> 7.\> \> $h_{j+1,j}:=\|\bw\|$; $\bv_{j+1}:=\bw/h_{j+1,j}$;\\
\> 8.\> \bf end for
\end{tabbing}
}
\end{algorithm}
\vskip5pt

Algorithm \ref{alg:arnoldi} generates orthonormal vectors $\bv_1,\bv_2,\ldots,\bv_{k+1}$, 
the first $k$ of which form a basis for ${\K}_k(A,\bb)$. Define the matrices 
$V_k=[\bv_1,\bv_2,\ldots,\bv_k]$ and \linebreak[4]$V_{k+1}=[V_k,\bv_{k+1}]$. The scalars 
$h_{i,j}$ determined by the algorithm define an upper Hessenberg matrix 
$H_{k+1,k}\in{\C}^{(k+1)\times k}$, i.e., the $h_{i,j}$ are the nontrivial entries of 
$H_{k+1,k}$. Using these matrices, the recursion formulas for the Arnoldi process can be 
expressed as a \emph{partial Arnoldi decomposition},
\begin{equation}\label{arndec}
A V_k = V_{k+1} H_{k+1,k}.
\end{equation}
The above relation is applied to compute the GMRES iterate $\bx_k$ as follows: Express (\ref{gmres1}) as
\begin{equation}\label{gmres2}
\min_{\bx\in{\K}_k(A,\bb)}\|A\bx-\bb\|=
\min_{\by\in{\C}^k}\|AV_k\by-\bb\|=
\min_{\by\in{\C}^k}\|H_{k+1,k}\by-\be_1\|\bb\|\,\|,
\end{equation}
where the orthonormality of the columns of $V_k$ and the fact that $\bb=V_{k+1}\be_1\|\bb\|$ 
have been exploited. Throughout this paper $\be_j=[0,\ldots,0,1,0,\ldots,0]^*$ denotes the
$j$th axis vector. The small minimization problem on the right-hand side of (\ref{gmres2}) can 
be solved conveniently by QR factorization of $H_{k+1,k}$; see \cite{Saadbook}. Denote the
solution by $\by_k$. Then $\bx_k=V_k\by_k$ solves (\ref{gmres1}) and $\br_k=\bb-A\bx_k$ 
is the associated residual error. Since ${\K}_{k-1}(A,\bb)\subset{\K}_k(A,\bb)$, we have 
$\|\br_{k}\|\leq\|\br_{k-1}\|$; generally this inequality is strict. Note that 
$\|\br_k\|=\|\be_1\|\bb\|-H_{k+1,k}\by_k\|$, so that the norm of the residual 
vector can be monitored using projected quantities, which are inexpensive to compute. We remark that a reorthogonalization procedure can be considered with Algorithm \ref{alg:arnoldi}, by running an additional modified Gram--Schmidt step for the vector $\bw$ after step 6 has been performed: this has the effect of assuring the columns of $V_{k+1}$ a better numerical orthogonality.

Assume that a fairly accurate bound $\delta>0$ for the norm of the error $\be$ in $\bb$ is
available,
\begin{equation}\label{errbd}
\|\be\|\leq\delta,
\end{equation}
and let $\tau\geq 1$ be a user-chosen parameter that is independent of $\delta$. The 
\emph{discrepancy principle} prescribes the iterations of GMRES applied to the 
solution of (\ref{linsys}) to be terminated as soon as an iterate $\bx_k$ has been 
determined such that the associated residual error $\br_k$ satisfies
\begin{equation}\label{discr}
\|\br_k\|\leq\tau\delta.
\end{equation}
The purpose of this stopping criterion is to terminate the iterations before the iterates
$\bx_k$ are severely contaminated by propagated error that stems from the error $\be$ in 
$\bb$. Note that the residual $\br_{\rm exact}=\bb-A\bx_{\rm exact}$ satisfies the 
inequality (\ref{discr}). This follows from the consistency of (\ref{nflinsys}), and (\ref{errbd}). Also 
iterations with LSQR are commonly terminated with the discrepancy principle; see, e.g.,
\cite{CLR1,EHN,Hankebook} for discussions on the use of the discrepancy principle for 
terminating iterations with GMRES and LSQR. 

The LSQR method \cite{PS} is an implementation of the conjugate gradient 
method applied to the normal equations,
\begin{equation}\label{normeq}
A^*A\bx=A^*\bb,
\end{equation}
with a Hermitian positive semidefinite matrix. LSQR circumvents the explicit formation of
$A^*A$. When using the initial iterate $\bx_0=\bzero$, LSQR determines approximate 
solutions of (\ref{linsys}) in a sequence of nested Krylov subspaces ${\K}_k(A^*A,A^*\bb)$, 
$k=1,2,\ldots~$. The $k$th iterate, $\bx_k$, computed by LSQR satisfies
\[
\|A\bx_k-\bb\|= \min_{\bx\in{\K}_k(A^*A,A^*\bb)}\|A\bx-\bb\|,\qquad 
\bx_k\in{\K}_k(A^*A,A^*\bb);
\]
see \cite{Bj,PS} for further details on LSQR. 

When $\delta$ in (\ref{errbd}) is fairly large, only few iterations can be carried out by
GMRES or LSQR before (\ref{discr}) is satisfied. In particular, an accurate approximation
of $\bx_{\rm exact}$ can then be determined by GMRES only if $\bx_{\rm exact}$ can be 
approximated well in a low-dimensional Krylov subspace ${\K}_k(A,\bb)$.  Moreover, it has 
been observed that GMRES based on the Arnoldi process applied to $A$ with initial 
vector $\bb$ may determine iterates $\bx_k$ that are contaminated by more propagated error
than iterates generated by LSQR; see \cite{HJ}. A reason for this is that a 
normalization of the error-contaminated vector $\bb$ is the first column of the matrix 
$V_k$ in the Arnoldi decomposition (\ref{arndec}), and the error $\be$ in $\bb$ is 
propagated to all columns of $V_k$ by the Arnoldi process. A remedy for this difficulty is
to use a modification of the Arnoldi decomposition,
\begin{equation}\label{modarndec}
A\widehat{V}_k=V_{k+j}H_{k+j,k},
\end{equation}
with $j\geq 2$. The columns of $\widehat{V}_k\in\C^{m\times k}$ form an orthonormal basis 
for the Krylov subspace $\K_k(A,A^{(j-1)}\bb)$, in which we are looking for an 
approximate solution. Moreover, the columns of 
$V_{k+j}\in\C^{m\times(k+j)}$ form an orthonormal basis for $\K_{k+j}(A,\bb)$, and all 
entries of the matrix $H_{k+j,k}\in\C^{(k+j)\times k}$ below the $j$th subdiagonal 
vanish; see \cite{DR} for details. The special case when $j=2$ is discussed in \cite{NRS}. 
When $j=1$, the decomposition \eqref{modarndec} simplifies to \eqref{arndec}.

A reason why applying the decomposition (\ref{modarndec}) may be beneficial is that, in
our typical applications, the matrix $A$ is a low-pass filter. Therefore, the high-frequency error
in the vector $\widehat{V}_k\be_1=A^{(j-1)}\bb/\|A^{(j-1)}\bb\|$ is damped.

The following examples illustrate that GMRES may perform poorly also when there is no 
error in $\bb$, in the sense that GMRES may require many iterations to solve the system of 
equations or not be able to compute a solution at all. While the coefficient matrices of these examples 
are artificial, related ones (e.g., the test problem \texttt{heat}) can be found in Hansen's \emph{Regularization Tools} \cite{Ha}, and also arise in image restoration when the available image has been 
contaminated by motion blur; see \cite[Section 4]{DMR}.

\emph{Example 2.1.} Let $A$ in (\ref{linsys}) be the downshift matrix
\begin{equation}\label{shiftmat}
A=\left[\begin{array}{cccccc}
0 & 0 & 0 & \cdots & 0 & 0 \\
1 & 0 & 0 & \cdots  & 0 & 0 \\
0 & 1 & 0 & \cdots  & 0 & 0 \\
  & 0 & \ddots  & \vdots  & 0 & 0 \\
  &   & \ddots  &         & 0 & 0 \\
  &   &   &  0  & 1 & 0 
\end{array}\right]\in{\C}^{m\times m}
\end{equation}
and let $\bb=\be_2$. The minimal-norm solution of the linear system of equations 
(\ref{linsys}) is $\bx_{\rm exact}=\be_1$. Since 
${\K}_k(A,\bb)={\rm span}\{\be_2,\be_3,\ldots,\be_{k+1}\}$, it follows that the solution 
of (\ref{gmres1}) is $\bx_k=\bzero$ for $1\leq k<m$. These solutions are poor 
approximations of $\bx_{\rm exact}$. GMRES breaks down at step $m$ due to division by zero 
in Algorithm \ref{alg:arnoldi}. Thus, when $m$ is large GMRES produces poor approximations
of $\bx_{\rm exact}$ for many iterations before breakdown. While breakdown of GMRES can be 
handled, see \cite{RY}, the lack of convergence of the iterates towards $\bx_{\rm exact}$ 
for many steps remains. The poor performance of GMRES in this example stems 
from the facts that $A$ is a shift operator and the desired solution $\bx_{\rm exact}$ has 
few nonvanishing entries. We remark that the minimal-norm solution 
$\bx_{\rm exact}=\be_1$ of (\ref{linsys}) lives in ${\K}_1(A^*A,A^*\bb)$ and LSQR 
determines this solution in one step. 
~~~$\Box$


Example 2.1 illustrates that replacing a linear discrete ill-posed problem \eqref{linsys} 
with a non-Hermitian coefficient matrix $A$ by a linear discrete ill-posed problem \eqref{normeq} having
a Hermitian positive semidefinite matrix $A^*A$ may be beneficial. To shed some light on 
the possible benefit of this kind of replacement, with the aim of developing suitable 
preconditioners different from $A^*$, we will discuss the distance of a square matrix $A$
to the set of Hermitian matrices $\H$, the set of anti-Hermitian (skew-Hermitian) matrices 
$\A$, the set of normal matrices $\N$, the set of Hermitian positive semidefinite matrices 
$\H_+$, and the set of Hermitian negative semidefinite matrices $\H_-$. We are interested 
in the distance to the set of normal matrices, because it is known that GMRES may converge 
slowly when the matrix $A$ in (\ref{linsys}) is far from $\N$. Specifically, the rate of 
convergence of GMRES may be slow when $A$ has a spectral factorization with a very 
ill-conditioned eigenvector matrix; see \cite[Theorem 3]{NRT1} and \cite{NRT2} for 
discussions. We remark that when $A$ belongs to the classes $\N$, $\H$, $\A$, or $\H_+$, 
the Arnoldi process and GMRES can be simplified; see, e.g., Eisenstat \cite{Ei}, Huckle 
\cite{Hu}, Paige and Saunders \cite{PS0}, and Saad \cite[Section 6.8]{Saadbook}. 

We measure distances between a matrix $A$ and the sets $\H$, $\A$, $\N$, and $\H_\pm$ in 
the Frobenius norm, which for a matrix $M$ is defined as 
$\|M\|_F=({\rm trace}(M^*M))^{1/2}$. The following proposition considers the matrix of 
Example 2.1.

\begin{proposition}\label{prop1}
Let the matrix $A\in\C^{m\times m}$ be defined by (\ref{shiftmat}). The relative distances
in the Frobenius norm to the sets of the Hermitian and anti-Hermitian matrices are
\begin{equation}\label{distsym}
\frac{{\rm dist}_F(A,\H)}{\|A\|_F}=\frac{1}{\sqrt{2}}
\end{equation}
and
\begin{equation}\label{distantisym}
\frac{{\rm dist}_F(A,\A)}{\|A\|_F}=\frac{1}{\sqrt{2}},
\end{equation}
respectively. Moreover,
\begin{equation}\label{distnormal}
\frac{{\rm dist}_F(A,\N)}{\|A\|_F}\leq \frac{1}{\sqrt{m}},
\end{equation}
\begin{equation}
\label{distspsdspec}
\frac{{\rm dist}_F(A,{\H_+})}{\|A\|_F}=\frac{\sqrt{3}}{2}
\end{equation}
and
\begin{equation}\label{distsnsdspec}
\frac{{\rm dist}_F(A,{\H_-})}{\|A\|_F}=\frac{\sqrt{3}}{2}.
\end{equation}
\end{proposition}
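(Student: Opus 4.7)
The plan is to handle the five distances separately, exploiting the Frobenius-orthogonal decomposition $A=H+K$ with $H=(A+A^*)/2$ Hermitian and $K=(A-A^*)/2$ skew-Hermitian. Since $A$ has exactly $m-1$ nonzero entries equal to $1$, we have $\|A\|_F^2 = m-1$, and this will be the denominator throughout. A direct count gives $\|H\|_F^2=\|K\|_F^2=(m-1)/2$.

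\textbf{Distances to $\H$ and $\A$.} For any $B\in\H$, $A-B = K + (H-B)$ is a sum of a skew-Hermitian and a Hermitian matrix, which are orthogonal in the Frobenius inner product (the trace of their product vanishes). Hence $\|A-B\|_F^2 = \|K\|_F^2 + \|H-B\|_F^2$, minimized at $B=H$, giving ${\rm dist}_F(A,\H)=\|K\|_F=\sqrt{(m-1)/2}$. The same splitting with the roles of $H$ and $K$ exchanged yields ${\rm dist}_F(A,\A)=\|H\|_F=\sqrt{(m-1)/2}$. Dividing by $\|A\|_F=\sqrt{m-1}$ gives (\ref{distsym}) and (\ref{distantisym}).

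\textbf{Distance to $\N$.} Here we only need an upper bound, obtained by exhibiting a specific normal matrix. Let $C = A + \be_1 \be_m^*$ be the circulant downshift; $C$ is unitary, hence normal, and so is $tC$ for every real $t$. Since $A$ and $\be_1\be_m^*$ have disjoint supports,
\[
\|A - tC\|_F^2 = (1-t)^2\|A\|_F^2 + t^2 = (m-1)(1-t)^2 + t^2.
\]
Minimizing in $t$ yields $t=(m-1)/m$ and the minimum value $(m-1)/m$. Hence ${\rm dist}_F(A,\N)\le\sqrt{(m-1)/m}$, and dividing by $\sqrt{m-1}$ gives $1/\sqrt{m}$, establishing (\ref{distnormal}).

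\textbf{Distances to $\H_+$ and $\H_-$.} Using the orthogonal splitting again, for $B\in\H_+$,
\[
\|A-B\|_F^2 = \|K\|_F^2 + \|H-B\|_F^2,
\]
so ${\rm dist}_F(A,\H_+)^2 = \|K\|_F^2 + {\rm dist}_F(H,\H_+)^2$. Since $H$ is Hermitian, the closest positive semidefinite matrix to $H$ is obtained from its spectral decomposition by truncating the negative eigenvalues to zero, giving ${\rm dist}_F(H,\H_+)^2 = \sum_{\lambda<0}\lambda^2$. The matrix $H$ is the standard symmetric tridiagonal matrix with $1/2$ on the first sub- and super-diagonal, whose eigenvalues are $\lambda_k=\cos(k\pi/(m+1))$ for $k=1,\ldots,m$. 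These pair up as $\lambda_{m+1-k}=-\lambda_k$ (with a single zero when $m$ is odd), so exactly half of $\|H\|_F^2=(m-1)/2$ is contributed by the negative eigenvalues; thus $\sum_{\lambda<0}\lambda^2=(m-1)/4$ and ${\rm dist}_F(A,\H_+)^2 = (m-1)/2 + (m-1)/4 = 3(m-1)/4$, giving the ratio $\sqrt{3}/2$. The identity ${\rm dist}_F(A,\H_-)={\rm dist}_F(A,\H_+)$ follows from the same eigenvalue symmetry applied to $-H$, yielding (\ref{distspsdspec}) and (\ref{distsnsdspec}).

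\textbf{Main obstacle.} The only step requiring genuine cleverness is the normality bound. The naive choice $N=C$ gives $\|A-C\|_F=1$, which yields $1/\sqrt{m-1}>1/\sqrt{m}$ and is therefore insufficient. The refinement is to rescale $C$; this is legitimate because scalar multiples of unitary matrices are normal, and the one-dimensional optimization in $t$ produces exactly the claimed $1/\sqrt{m}$.
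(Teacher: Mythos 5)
Your proposal is correct and follows essentially the same route as the paper: the orthogonal splitting of $A$ into its Hermitian part $A_\H=(A+A^*)/2$ and skew-Hermitian part $A_\A=(A-A^*)/2$, Higham's characterization of the nearest Hermitian positive semidefinite matrix (which you re-derive rather than cite), the eigenvalues $\cos(j\pi/(m+1))$ of $A_\H$ together with their symmetry about the origin, and a circulant matrix for the normality bound. The only difference is one of self-containedness: where the paper delegates the $\H$/$\A$ distances and the circulant bound to references, you prove them in-line, including the explicit scaling $t=(m-1)/m$ of the circulant downshift that realizes exactly the $1/\sqrt{m}$ bound.
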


\begin{proof}
The distance (\ref{distsym}) is shown in \cite[Section 5]{NPR1}, and (\ref{distantisym})
can be shown similarly. Thus, the matrix $A$ is equidistant to the sets $\H$ and $\A$.
The upper bound (\ref{distnormal}) for the distance to the set of normal matrices is 
achieved for a circulant matrix; see \cite[Section 9]{NPR1}. The distance to $\H_+$ is
given by 
\begin{equation}\label{distspsd}
{\rm dist}_F(A,\H_+)=\left(\sum_{\lambda_i(A_{\H})<0}\lambda_i^2(A_{\H})+
\|A_{\A}\|_F^2\right)^{1/2},
\end{equation}
see Higham \cite[Theorem 2.1]{Hi}. Here $A_{\H}=(A+A^*)/2$ and $A_{\A}=(A-A^*)/2$ denote 
the Hermitian and skew-Hermitian parts of $A$, respectively, and
$\lambda_1(A_{\H}),\ldots,\lambda_m(A_{\H})$ are the eigenvalues of $A_{\H}$. We note 
that the distance in the Frobenius norm to the set $\H_+$ is the same as the distance
to the set of Hermitian positive definite matrices. The eigenvalues of $A_{\H}$ are known 
to be 
\begin{equation}\label{evalsym}
\lambda_j(A_{\H})=\cos\frac{\pi j}{m+1},\qquad j=1,2,\ldots,m;
\end{equation}
see, e.g., \cite[Section 2]{NPR2}. The expression (\ref{distspsdspec}) now follows from
$\|A\|_F^2=m-1$, $\|A_{\A}\|^2=(m-1)/2$, and the fact that the sum in (\ref{distspsd}) 
evaluates to $(m-1)/4$. Finally, (\ref{distsnsdspec}) follows from 
\[
{\rm dist}_F(A,\H_-)=\left(\sum_{\lambda_i(A_{\H})>0}\lambda_i^2(A_{\H})+
\|A_{\A}\|_F^2\right)^{1/2}
\]
and the fact that the eigenvalues (\ref{evalsym}) are allocated symmetrically with 
respect to the origin.
\end{proof}

Proposition \ref{prop1} shows the matrix (\ref{shiftmat}) to be close to a normal matrix
and Example 2.1 illustrates that closeness to normality is not sufficient for GMRES to
give an accurate approximation of the solution within a few iterations. Indeed, we can 
modify the matrix (\ref{shiftmat}) to obtain a normal matrix and, as the following
example shows, GMRES requires many iterations to solve the resulting linear system of
equations.

\emph{Example 2.2.} Let the matrix $A$ be a circulant obtained by setting the $(1,m)$-entry of 
the matrix (\ref{shiftmat}) to one, and let the right-hand side $\bb$ be the same as in
Example 2.1. Then the solution is $\bx_{\rm exact}=\be_1$. Similarly as in Example 2.1, 
GMRES yields the iterates $\bx_k=\bzero$ for $1\leq k<m$. The solution is not achieved 
until the iterate $\bx_m$ is computed. A related example is presented by Nachtigal et al.
\cite{NRT1}. We remark that the matrix $A^*A$ is the identity, so the first iterate determined 
by LSQR with initial iterate $\bx_0=\bzero$ is $\bx_{\rm exact}$. Thus, LSQR performs much 
better than GMRES also for this example.~~~$\Box$


The dependence of the convergence behavior of GMRES on the eigenvalues and eigenvectors of 
$A$ is complicated; see Du et al. \cite{DDTM} for a recent discussion and references. It 
is therefore not clear whether replacing the matrix $A$ in (\ref{linsys}) by a matrix that
is closer to the sets $\H$, $\H_+$, or $\H_-$ by choosing a suitable preconditioner, and 
then applying GMRES to the preconditioned linear system of equations so obtained, will 
yield an improved approximation of $\bx_{\rm exact}$. Moreover, we do not want the 
preconditioner to give severe propagation of the error $\be$ in $\bb$ into the computed 
iterates; see Hanke et al. \cite{HNP} for an insightful discussion on the construction of 
preconditioners for linear discrete ill-posed problems. Nonetheless, when $A$ is close to 
the set $\H_+$ and the eigenvalues of $A$ cluster in a small region in the complex plane,
convergence typically is fairly rapid. This suggests that we should determine a 
preconditioner such that the preconditioned matrix is close to the set $\H_+$. Since the 
convergence of GMRES applied to the solution of (\ref{linsys}) is invariant under 
multiplication of the matrix $A$ by a complex rotation $\mathrm{e}^{\mathrm{i}\varphi}$, 
where $\mathrm{i}=\sqrt{-1}$ and $-\pi<\varphi\leq\pi$, it suffices that the 
preconditioned matrix is close to a ``rotated'' Hermitian positive semidefinite matrix 
$\mathrm{e}^{\mathrm{i}\varphi}N$, where $N\in\H_+$. In the following we refer to the set
of ``rotated'' Hermitian matrices as the set of \emph{generalized Hermitian matrices},
and denote it by $\G$. It contains normal matrices $A\in\C^{m\times m}$, whose 
eigenvalues are collinear; see below. The set of ``rotated'' Hermitian positive 
semidefinite matrices is denoted by $\G_+$ and referred to as the set of \emph{generalized 
Hermitian positive semidefinite matrices}.

\section{Generalized Hermitian and Hermitian positive semidefinite matrices}\label{sec3}
In the following, we show some properties of generalized Hermitian and generalized 
Hermitian positive semidefinite matrices.

\begin{proposition}\label{S1}
The matrix $A\in\C^{m\times m}$ is generalized Hermitian if and only if there exist 
$\varphi\in (-\pi,\pi]$ and $\alpha\in \C$ such that
\begin{equation}\label{Bherm}
A=\mathrm{e}^{\mathrm{i}\varphi}B+\alpha\,I,
\end{equation}
where $B\in\C^{m\times m}$ is an Hermitian matrix and $I\in\C^{m\times m}$ denotes the 
identity.
\end{proposition}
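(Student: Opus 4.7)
The plan is to prove both directions by exploiting the characterization hinted at just above the proposition, namely that $\G$ consists of normal matrices whose eigenvalues lie on a line in $\C$ (equivalently, rotated and shifted Hermitian matrices). Both implications will come from direct spectral arguments, with no real machinery needed.

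For the easier direction ($\Leftarrow$), suppose $A=\mathrm{e}^{\mathrm{i}\varphi}B+\alpha\,I$ with $B^*=B$. First I would check normality by expanding
\[
AA^*=(\mathrm{e}^{\mathrm{i}\varphi}B+\alpha I)(\mathrm{e}^{-\mathrm{i}\varphi}B+\bar\alpha I)=B^2+(\mathrm{e}^{\mathrm{i}\varphi}\bar\alpha+\mathrm{e}^{-\mathrm{i}\varphi}\alpha)B+|\alpha|^2 I,
\]
and noting that $A^*A$ expands to the same expression because $B$ commutes with itself and with $I$. Then, writing the spectral decomposition $B=U\Lambda U^*$ with $U$ unitary and $\Lambda$ real diagonal, one obtains $A=U(\mathrm{e}^{\mathrm{i}\varphi}\Lambda+\alpha I)U^*$, so the eigenvalues of $A$ are the points $\mathrm{e}^{\mathrm{i}\varphi}\lambda_j(B)+\alpha$, which all lie on the line through $\alpha$ with direction $\mathrm{e}^{\mathrm{i}\varphi}$. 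Hence $A\in\G$.

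For the converse ($\Rightarrow$), start from a generalized Hermitian $A$, i.e.\ a normal matrix whose eigenvalues are collinear in $\C$. Pick any point $\alpha\in\C$ on that spectral line and a $\varphi\in(-\pi,\pi]$ such that $\mathrm{e}^{\mathrm{i}\varphi}$ is a unit vector parallel to the line. Defining
\[
B:=\mathrm{e}^{-\mathrm{i}\varphi}(A-\alpha I),
\]
the matrix $B$ is still normal, and its eigenvalues are $\mathrm{e}^{-\mathrm{i}\varphi}(\lambda_j(A)-\alpha)\in\R$ by the choice of $\alpha$ and $\varphi$. A normal matrix with real spectrum is Hermitian, so $B^*=B$, and rearranging gives $A=\mathrm{e}^{\mathrm{i}\varphi}B+\alpha I$.

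The only delicate point is pinning down how the paper actually defines membership in $\G$: if the definition is literally ``there exist $\varphi$, $\alpha$ and Hermitian $B$ with $A=\mathrm{e}^{\mathrm{i}\varphi}B+\alpha I$'', then the proposition is essentially a restatement and only the normality/collinearity consequences above are worth recording. If instead the operative definition is the intrinsic one (normal with collinear spectrum), then the converse argument I gave is where the content lies. I do not expect any real obstacle: degenerate cases such as $A$ a scalar multiple of $I$ leave $\varphi$ underdetermined, but the proposition only claims existence of a decomposition, so this causes no trouble.
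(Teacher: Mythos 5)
Your proposal is correct and follows essentially the same route as the paper: the paper's operative definition of $\G$ is indeed ``normal with collinear eigenvalues,'' and its proof likewise writes $\Lambda=\mathrm{e}^{\mathrm{i}\varphi}D+\alpha I$ to get $B=\mathrm{e}^{-\mathrm{i}\varphi}(A-\alpha I)=UDU^*$ Hermitian in one direction, and reads off collinearity and unitary diagonalizability from $A=U(\mathrm{e}^{\mathrm{i}\varphi}D+\alpha I)U^*$ in the other. Your only cosmetic deviations—verifying normality of $A$ by expanding $AA^*$ and $A^*A$, and invoking ``normal with real spectrum implies Hermitian'' instead of writing $B=UDU^*$ directly—are equivalent spectral-theorem arguments, and your remark about the degenerate scalar case is a harmless extra.
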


\begin{proof}
Let $A\in\C^{m\times m}$ be a generalized Hermitian matrix. Then there is a unitary matrix 
$U\in\C^{m\times m}$ such that $A=U\Lambda U^*$, where 
$\Lambda=\mathrm{diag}[\lambda_1,\ldots,\lambda_m]\in\C^{m\times m}$, and $A$ has collinear 
eigenvalues, i.e., there exist $\varphi\in (-\pi,\pi]$ and $\alpha \in \C$ such that 
\[
\Lambda=\mathrm{e}^{\mathrm{i}\varphi}D+\alpha I,
\]
where $D =\mathrm{diag}[d_1,\ldots,d_m]\in\R^{m\times m}$, so that 
$\lambda_i=\mathrm{e}^{\mathrm{i}\varphi}d_i+\alpha$ for $1\leq i\leq m$.
Thus, the matrix
\[
B:=\mathrm{e}^{-\mathrm{i}\varphi}(A-\alpha I)=\mathrm{e}^{-\mathrm{i}\varphi}(U\Lambda U^*-\alpha I)
=U(\mathrm{e}^{-\mathrm{i}\varphi}(\Lambda-\alpha I))U^*=UDU^*
\]
is Hermitian.

Conversely, if $B=\mathrm{e}^{-\mathrm{i}\varphi}(A-\alpha I)$ is Hermitian, then 
$B=UDU^*$, where $U\in\C^{m\times m}$ is unitary and $D\in\R^{m\times m}$ is diagonal.
Hence, $A=\mathrm{e}^{\mathrm{i}\varphi}B+\alpha\,I=
U(\mathrm{e}^{\mathrm{i}\varphi}D+\alpha I)U^*$ has collinear eigenvalues and is unitarily
diagonalizable.
\end{proof}

\begin{proposition}\label{S2}
If the matrix $Z=[z_{i,j}]\in\C^{m\times m}$ is generalized Hermitian, then there exist 
$\theta\in (-\pi,\pi]$ and $\gamma \in \R$ such that
\begin{equation}\label{gherm}
z_{i,j}= \left\{
\begin{array}{lr}
\overline{z}_{j,i}\,\mathrm{e}^{\mathrm{i}\theta},& \mathrm{if}\; i\neq j,\\
\overline{z}_{i,i}\,\mathrm{e}^{\mathrm{i}\theta}+
\gamma\,\mathrm{e}^{\mathrm{i}\frac{\theta+\pi}{2}},& \mathrm {if}\; i=j,
\end{array}
\right.
\end{equation}
where the bar denotes complex conjugation.
\end{proposition}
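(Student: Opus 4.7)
The plan is to derive \eqref{gherm} directly from Proposition \ref{S1} by matching entries. By Proposition \ref{S1}, since $Z$ is generalized Hermitian, there exist $\varphi\in(-\pi,\pi]$, $\alpha\in\C$, and an Hermitian matrix $B=[b_{i,j}]\in\C^{m\times m}$ such that $Z=\mathrm{e}^{\mathrm{i}\varphi}B+\alpha I$. I would then propose to set $\theta$ to be the representative of $2\varphi$ in $(-\pi,\pi]$, so that $\mathrm{e}^{\mathrm{i}\theta}=\mathrm{e}^{2\mathrm{i}\varphi}$, and choose $\gamma$ appropriately from $\alpha$ and $\varphi$; the two cases $i\neq j$ and $i=j$ are then handled separately.

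For the off-diagonal entries ($i\neq j$), the identity matrix contributes nothing, so $z_{i,j}=\mathrm{e}^{\mathrm{i}\varphi}b_{i,j}$ and $z_{j,i}=\mathrm{e}^{\mathrm{i}\varphi}b_{j,i}=\mathrm{e}^{\mathrm{i}\varphi}\overline{b}_{i,j}$. Taking the complex conjugate of the latter and multiplying by $\mathrm{e}^{\mathrm{i}\theta}=\mathrm{e}^{2\mathrm{i}\varphi}$ recovers $\mathrm{e}^{\mathrm{i}\varphi}b_{i,j}=z_{i,j}$. This verifies the first line of \eqref{gherm} with no further effort.

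For the diagonal entries, I would compute $z_{i,i}-\overline{z}_{i,i}\mathrm{e}^{\mathrm{i}\theta}$. Using $\overline{z}_{i,i}=\mathrm{e}^{-\mathrm{i}\varphi}b_{i,i}+\overline{\alpha}$ (since $b_{i,i}\in\R$), the contribution from $B$ cancels after multiplication by $\mathrm{e}^{2\mathrm{i}\varphi}$, leaving
\[
z_{i,i}-\overline{z}_{i,i}\,\mathrm{e}^{\mathrm{i}\theta}
=\alpha-\overline{\alpha}\,\mathrm{e}^{2\mathrm{i}\varphi}
=\mathrm{e}^{\mathrm{i}\varphi}\bigl(\alpha\,\mathrm{e}^{-\mathrm{i}\varphi}-\overline{\alpha\,\mathrm{e}^{-\mathrm{i}\varphi}}\bigr)
=2\mathrm{i}\,\mathrm{e}^{\mathrm{i}\varphi}\,\mathrm{Im}(\alpha\,\mathrm{e}^{-\mathrm{i}\varphi}).
\]
Since $\mathrm{i}\,\mathrm{e}^{\mathrm{i}\varphi}=\mathrm{e}^{\mathrm{i}(\varphi+\pi/2)}=\mathrm{e}^{\mathrm{i}(\theta+\pi)/2}$, setting $\gamma:=2\,\mathrm{Im}(\alpha\,\mathrm{e}^{-\mathrm{i}\varphi})\in\R$ yields precisely the second line of \eqref{gherm}.

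There is no real obstacle here; the argument is a direct computation once the correct pair $(\theta,\gamma)$ is read off from Proposition \ref{S1}. The only minor subtlety worth mentioning is that $\theta=2\varphi$ may need to be reduced modulo $2\pi$ to land in $(-\pi,\pi]$, and that the realness of $\gamma$ relies on $\alpha\mathrm{e}^{-\mathrm{i}\varphi}-\overline{\alpha\mathrm{e}^{-\mathrm{i}\varphi}}$ being purely imaginary, which is automatic.
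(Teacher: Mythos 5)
Your proof is correct and is essentially the paper's own argument: the paper solves the Hermitian condition $\mathrm{e}^{\mathrm{i}\phi}Z+\beta I=\mathrm{e}^{-\mathrm{i}\phi}Z^*+\overline{\beta}I$ at the matrix level and, after setting $\theta=-2\phi$ and $\gamma=-2\,\mathrm{Im}(\beta)$, arrives at exactly the same pair $\theta=2\varphi$, $\gamma=2\,\mathrm{Im}(\alpha\,\mathrm{e}^{-\mathrm{i}\varphi})$ that you obtain entrywise from $Z=\mathrm{e}^{\mathrm{i}\varphi}B+\alpha I$. The one detail you flag, reducing $\theta$ modulo $2\pi$ into $(-\pi,\pi]$, also forces a sign flip in $\gamma$ because $\mathrm{e}^{\mathrm{i}\frac{\theta+\pi}{2}}$ changes sign under $\theta\mapsto\theta-2\pi$; since $\gamma$ is an arbitrary real number this is harmless, and the paper's proof is silent on the same point.
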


\begin{proof}
It follow from Proposition \ref{S1} that there exist an angle $\phi\in (-\pi,\pi]$ and a 
scalar $\beta\in \C$ such that $\mathrm{e}^{\mathrm{i}\phi}Z+\beta\,I$ is Hermitian, i.e.,  
\[
\mathrm{e}^{\mathrm{i}\phi}Z+\beta I=\mathrm{e}^{-\mathrm{i}\phi}Z^*+\overline{\beta}I.
\]
Thus, 
\[
Z=\mathrm{e}^{-2\mathrm{i}\phi}Z^*-2\,\mathrm{e}^{\mathrm{i}\frac{\pi-2\phi}{2}}\mathrm{Im}(\beta)I.
\]
Setting $\theta=-2\phi$ and $\gamma=-2\,\mathrm{Im}(\beta)$ concludes the proof.
\end{proof}

\begin{proposition}\label{S3}
Let $A=[a_{i,j}]\in\C^{m\times m}$. If 
\begin{equation}\label{cond}
m\,\mathrm{Trace}(A^2)\neq\mathrm{Trace}(A),
\end{equation}
then the unique closest generalized Hermitian matrix 
$\widehat{A}=[\widehat{a}_{i,j}]\in\C^{m\times m}$ to $A$ in the Frobenius norm is given 
by
\begin{equation}\label{clgherm}
\widehat{a}_{i,j}= \left\{
\begin{array}{lr}
\frac{1}{2}(a_{i,j}+\overline{a}_{j,i}\mathrm{e}^{\mathrm{i}\widehat{\theta}}),&
\mathrm{if}\; i\neq j,\\
\frac{1}{2}(a_{i,i}+\overline{a}_{i,i}\mathrm{e}^{\mathrm{i}\widehat{\theta}}+
\widehat{\gamma}\,\mathrm{e}^{\mathrm{i}\frac{\widehat{\theta}+\pi}{2}}),& 
\mathrm {if}\; i=j,
\end{array}
\right.
\end{equation}
where 
\[
\widehat{\theta}=\arg(\mathrm{Trace}(A^2)-\frac{1}{m}(\mathrm{Trace}(A))^2),\qquad
\widehat{\gamma}=\frac{2}{m}\,\mathrm{Im}(\mathrm{e}^{-\mathrm{i}
\frac{\widehat{\theta}}{2}}\mathrm{Trace}(A)).
\]
Moreover, the distance of $A$ to the set ${\mathbb G}$ of generalized Hermitian matrices 
is given by
\[
{\rm dist}_F(A,{\mathbb G})=\sqrt{\frac{\|A\|_F^2}{2}-\frac{1}{2}\mathrm{Re}
\left(\mathrm{e}^{-\mathrm{i}\widehat{\theta}}\sum_{i,j=1}^m a_{i,j}a_{j,i}\right)-
\frac{1}{m}\left(\mathrm{Im}\left(\mathrm{e}^{-\mathrm{i}\frac{\widehat{\theta}}{2}}
\sum_{i=1}^m a_{i,i}\right)\right)^2}.
\]
If (\ref{cond}) is violated, then there are infinitely many matrices 
$\widehat{A}(\theta)=[\widehat{a}_{i,j}(\theta)]\in\C^{m\times m}$, depending on an 
arbitrary angle $\theta$, at the same minimal distance from $A$, whose entries are given
by
\begin{equation}\label{clgherm2}
\widehat{a}_{i,j}(\theta)= \left\{
\begin{array}{lr}
\frac{1}{2}(a_{i,j}+\overline{a}_{j,i}\mathrm{e}^{\mathrm{i}\theta}),& \mathrm{if}\;
i\neq j,\\
\frac{1}{2}(a_{i,i}+\overline{a}_{i,i}\mathrm{e}^{\mathrm{i}\theta}+\widehat{\gamma}\,
\mathrm{e}^{\mathrm{i}\frac{\theta+\pi}{2}}),& \mathrm {if}\; i=j.
\end{array}
\right.
\end{equation}
\end{proposition}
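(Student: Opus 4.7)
The plan is to use Proposition \ref{S2} as a parametrization of $\mathbb{G}$ and then minimize $\|A-Z\|_F^2$ in three nested stages: first over the off-diagonal pairs, then over the diagonal entries (jointly with $\gamma$), and finally over the phase $\theta$. By Proposition \ref{S2}, any $Z=[z_{i,j}]\in\mathbb{G}$ is determined by a phase $\theta\in(-\pi,\pi]$, a real $\gamma$, and free entries subject to $z_{i,j}=\overline{z}_{j,i}\mathrm{e}^{\mathrm{i}\theta}$ for $i\neq j$ and $z_{i,i}=\overline{z}_{i,i}\mathrm{e}^{\mathrm{i}\theta}+\gamma\mathrm{e}^{\mathrm{i}(\theta+\pi)/2}$. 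The objective then decouples into independent sub-problems indexed by unordered off-diagonal pairs $\{i,j\}$ and by diagonal indices.

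For a fixed pair $\{i,j\}$ with $i\neq j$, I would set $w=z_{j,i}$ and minimize $|a_{i,j}-\overline{w}\mathrm{e}^{\mathrm{i}\theta}|^2+|a_{j,i}-w|^2$ over $w\in\C$. A Wirtinger derivative with respect to $\overline{w}$ gives the closed-form minimizer $w=\tfrac12(a_{j,i}+\mathrm{e}^{\mathrm{i}\theta}\overline{a}_{i,j})$, yielding the expression for $\widehat{a}_{i,j}$ in (\ref{clgherm}). A short expansion shows that at this minimizer
\[
|a_{i,j}-\widehat a_{i,j}|^2+|a_{j,i}-\widehat a_{j,i}|^2
=\tfrac{1}{2}\bigl(|a_{i,j}|^2+|a_{j,i}|^2\bigr)-\mathrm{Re}\bigl(\mathrm{e}^{-\mathrm{i}\theta}a_{i,j}a_{j,i}\bigr).
\]
For the diagonal, I would rewrite the constraint as $\mathrm{Im}(\mathrm{e}^{-\mathrm{i}\theta/2}z_{i,i})=\gamma/2$, which means $z_{i,i}$ lies on a real line in $\C$. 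Orthogonal projection of $a_{i,i}$ onto this line gives a contribution $(\mathrm{Im}(\mathrm{e}^{-\mathrm{i}\theta/2}a_{i,i})-\gamma/2)^2$, and minimising the sum of these over $\gamma\in\R$ yields $\gamma=\tfrac{2}{m}\,\mathrm{Im}(\mathrm{e}^{-\mathrm{i}\theta/2}\,\mathrm{Trace}(A))$, matching $\widehat\gamma$. The optimal $z_{i,i}$ then coincides with the formula in (\ref{clgherm}).

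It remains to add the minima over pairs and diagonals, simplify, and minimize over $\theta$. Using $\sum_{i\neq j}a_{i,j}a_{j,i}+\sum_i a_{i,i}^2=\mathrm{Trace}(A^2)$ to combine the $\mathrm{Re}(\mathrm{e}^{-\mathrm{i}\theta}\cdot)$ terms, and the identity $\mathrm{Im}(z)^2=\tfrac12|z|^2-\tfrac12\mathrm{Re}(z^2)$ to absorb the diagonal contribution, the total squared distance as a function of $\theta$ reduces to
\[
\tfrac{1}{2}\|A\|_F^2-\tfrac{1}{2m}|\mathrm{Trace}(A)|^2
-\tfrac{1}{2}\,\mathrm{Re}\!\left(\mathrm{e}^{-\mathrm{i}\theta}\bigl(\mathrm{Trace}(A^2)-\tfrac{1}{m}(\mathrm{Trace}(A))^2\bigr)\right).
\]
Only the last term depends on $\theta$, and $\mathrm{Re}(\mathrm{e}^{-\mathrm{i}\theta}\zeta)$ is maximised over $\theta\in(-\pi,\pi]$ at $\theta=\arg\zeta$ whenever $\zeta\neq 0$. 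Taking $\zeta=\mathrm{Trace}(A^2)-\tfrac{1}{m}(\mathrm{Trace}(A))^2$ produces $\widehat\theta$ and, after substituting back and taking the square root, the stated formula for $\mathrm{dist}_F(A,\mathbb{G})$.

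The main obstacles are essentially bookkeeping rather than conceptual: carefully tracking the factor of $\tfrac12$ that appears when one passes between sums over ordered and unordered pairs, and combining diagonal and off-diagonal contributions so that the two $\mathrm{Re}(\mathrm{e}^{-\mathrm{i}\theta}\cdot)$ terms merge into a single one involving $\mathrm{Trace}(A^2)$. Finally, for the degenerate case, when $\zeta=0$—which is precisely the failure of condition (\ref{cond})—the $\theta$-dependent term vanishes identically, so every $\theta\in(-\pi,\pi]$ attains the minimum, and the formulas from the first two stages give the one-parameter family $\widehat{A}(\theta)$ in (\ref{clgherm2}); uniqueness otherwise follows because $\arg\zeta$ is single-valued on $\C\setminus\{0\}$.
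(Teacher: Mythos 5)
Your proposal is correct and takes essentially the same route as the paper: both parametrize ${\mathbb G}$ through the constraints of Proposition \ref{S2} and minimize $\|A-Z\|_F^2$ in nested stages---first over the free entries for fixed $(\theta,\gamma)$ (the paper via Lagrange multipliers, you via per-pair Wirtinger/projection arguments, with identical formulas (\ref{clgherm})), then over $\gamma$, then over $\theta$. Your only deviation is a pleasant streamlining of the last stage: by absorbing the diagonal contribution with $\mathrm{Im}(z)^2=\tfrac{1}{2}|z|^2-\tfrac{1}{2}\mathrm{Re}(z^2)$ you reduce the $\theta$-minimization to maximizing $\mathrm{Re}\bigl(\mathrm{e}^{-\mathrm{i}\theta}\zeta\bigr)$ with $\zeta=\mathrm{Trace}(A^2)-\tfrac{1}{m}(\mathrm{Trace}(A))^2$, which makes $\widehat{\theta}=\arg\zeta$, the uniqueness claim, and the degenerate case $\zeta=0$ all immediate, whereas the paper solves the trigonometric stationarity equation $d'(\theta,\widehat{\gamma}(\theta))=0$.
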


\begin{proof}
The entries of the generalized Hermitian matrix 
$Z(\theta,\gamma)=[z_{i,j}(\theta,\gamma)]\in\C^{m\times m}$, that minimizes the distance 
of $A$ in the Frobenius norm from the set ${\mathbb G}$ for the 
given angle $\theta$ and real $\gamma$, are determined by minimizing $\|A-Z\|^2$, where 
the matrix $Z\in\C^{m\times m}$ is subject to the equality constraints of Proposition 
\ref{S2}. The method of Lagrange multipliers 
produces
\begin{equation}\label{cgherm}
z_{i,j}(\theta,\gamma)= \left\{
\begin{array}{lr}
\frac{1}{2}(a_{i,j}+\overline{a}_{j,i}\mathrm{e}^{\mathrm{i}\theta}),& 
\mathrm{if}\; i\neq j,\\
\frac{1}{2}(a_{i,i}+\overline{a}_{i,i}\mathrm{e}^{\mathrm{i}\theta}+\gamma\,
\mathrm{e}^{\mathrm{i}\frac{\theta+\pi}{2}}),& \mathrm {if}\; i=j.
\end{array}
\right.
\end{equation}
Substituting these values into $\|A-Z\|_F$ yields
\begin{eqnarray*}
d(\theta,\gamma)&=&\|A-Z(\theta,\gamma)\|_F^2=\frac{1}{4}
\sum_{\substack{i,j=1\\i\neq j}}^m |a_{i,j}-\overline{a}_{j,i}
\mathrm{e}^{\mathrm{i}\theta}|^2+\frac{1}{4}\sum_{i=1}^m 
|a_{i,i}-(\overline{a}_{i,i}\mathrm{e}^{\mathrm{i}\theta}+\gamma\,
\mathrm{e}^{\mathrm{i}\frac{\theta+\pi}{2}})|^2\\
&=& \frac{\|A\|_F^2}{2}+\frac{m}{4}\gamma^2-\frac{1}{2}\mathrm{Re}
\left(\mathrm{e}^{-\mathrm{i}\theta}\sum_{i,j=1}^m a_{i,j}a_{j,i}\right)
-\gamma\mathrm{Im}\left(\mathrm{e}^{-\mathrm{i}\frac{\theta}{2}}
\sum_{i=1}^m a_{i,i}\right).
\end{eqnarray*}
The desired values of $\theta$ and $\gamma$ are determined by minimizing 
$d(\theta,\gamma)$. It follows that $\partial d(\theta,\gamma)/ \partial \gamma=0$ if and
only if 
\[
\gamma=\widehat{\gamma}(\theta)=\frac{2}{m}\,
\mathrm{Im}\left(\mathrm{e}^{-\mathrm{i}\frac{\theta}{2}}\sum_{i=1}^{m}a_{i,i}\right).
\]
Thus, we obtain 
\[
d(\theta,\widehat{\gamma}(\theta))=\frac{\|A\|_F^2}{2}-\frac{1}{2}
\mathrm{Re}\left(\mathrm{e}^{-\mathrm{i}\theta}\sum_{i,j=1}^m a_{i,j}a_{j,i}\right)-
\frac{1}{m}\left(\mathrm{Im}\left(\mathrm{e}^{-\mathrm{i}\frac{\theta}{2}}
\sum_{i=1}^m a_{i,i}\right)\right)^2.
\]
Then $d'(\theta,\widehat{\gamma}(\theta))=0$ if and only if 
\[
\left(\mathrm{Re}(w_1)-\frac{1}{m}\mathrm{Re}(w_2^2)\right) \sin\theta=
\left(\mathrm{Im}(w_1)-\frac{1}{m}\mathrm{Im}(w_2^2)\right)\cos\theta,
\]
where $w_1=\sum_{i,j=1}^m a_{i,j}a_{j,i}$ and $w_2=\sum_{i=1}^m a_{i,i}$.
Thus, if $mw_1\neq w_2^2$, one has
\[
\widehat{\theta}=\arg(w_1-\frac{1}{m}w_2^2).
\]
This concludes the proof.
\end{proof}

\begin{corollary}\label{S4}
Let the matrix $A=[a_{i,j}]\in\C^{m\times m}$ have trace zero. If 
\begin{equation}\label{cond2}
\sum_{i,j=1}^m a_{i,j}a_{j,i}\neq0,
\end{equation}
then the unique closest generalized Hermitian matrix 
$\widehat{A}=[\widehat{a}_{i,j}]\in\C^{m\times m}$ to $A$ in the Frobenius norm is given 
by
\begin{equation}\label{clgherm3}
\widehat{a}_{i,j}= 
\frac{1}{2}(a_{i,j}+\overline{a}_{j,i}\mathrm{e}^{\mathrm{i}\widehat{\theta}}),
\end{equation}
where 
\[
\widehat{\theta}= \arg\left(\sum_{i,j=1}^m a_{i,j}a_{j,i}\right).
\]
Moreover, 
\[
{\rm dist}_F(A,{\mathbb G})=\sqrt{\frac{\|A\|_F^2-|\sum_{i,j=1}^m a_{i,j}a_{j,i}|}{2}}.
\]
If (\ref{cond2}) is violated, then there are infinitely many matrices 
$\widehat{A}(\theta)=[\widehat{a}_{i,j}(\theta)]\in\C^{m\times m}$, depending on an 
arbitrary angle $\theta$, at the same minimal distance from $A$, namely
\begin{equation}\label{clgherm4}
{\widehat a}_{i,j}(\theta)= 
\frac{a_{i,j}+\overline{a}_{j,i}\mathrm{e}^{\mathrm{i}\theta}}{2},
\qquad 1\leq i,j\leq m.
\end{equation}
\end{corollary}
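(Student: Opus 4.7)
The plan is to derive Corollary \ref{S4} as a direct specialization of Proposition \ref{S3} to the trace-zero case. Writing $w_1 = \sum_{i,j=1}^m a_{i,j}a_{j,i}$ and $w_2 = \sum_{i=1}^m a_{i,i}$ as in the proof of Proposition \ref{S3}, the hypothesis $\mathrm{Trace}(A)=0$ means exactly that $w_2=0$. I would begin by observing that, under $w_2=0$, the condition $mw_1\neq w_2^2$ of Proposition \ref{S3} reduces to $w_1\neq 0$, which is precisely \eqref{cond2}. Thus, whenever \eqref{cond2} holds, Proposition \ref{S3} applies and delivers a unique closest generalized Hermitian matrix $\widehat{A}$.

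Next I would compute the relevant parameters. Since
\[
\widehat{\gamma}=\frac{2}{m}\,\mathrm{Im}\!\left(\mathrm{e}^{-\mathrm{i}\widehat{\theta}/2}\,w_2\right)
\]
vanishes when $w_2=0$, the diagonal formula in \eqref{clgherm} collapses into the off-diagonal one, yielding the single expression \eqref{clgherm3} for every pair $(i,j)$. Similarly, with $w_2=0$ the optimal angle
\[
\widehat{\theta}=\arg\!\left(w_1-\tfrac{1}{m}w_2^2\right)
\]
reduces to $\widehat{\theta}=\arg(w_1)=\arg\!\bigl(\sum_{i,j} a_{i,j}a_{j,i}\bigr)$, as claimed.

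For the distance formula I would substitute $w_2=0$ and $\widehat{\theta}=\arg(w_1)$ into the expression for $d(\widehat{\theta},\widehat{\gamma}(\widehat{\theta}))$ derived in the proof of Proposition \ref{S3}. The last term involving $\mathrm{Im}(\mathrm{e}^{-\mathrm{i}\widehat{\theta}/2}w_2)$ drops out, while the middle term becomes
\[
\frac{1}{2}\,\mathrm{Re}\!\left(\mathrm{e}^{-\mathrm{i}\arg(w_1)}\,w_1\right)=\frac{1}{2}|w_1|,
\]
giving ${\rm dist}_F(A,\G)^2 = (\|A\|_F^2-|w_1|)/2$, which is the stated formula after taking the square root. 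Finally, when \eqref{cond2} fails, i.e.\ $w_1=0$, the derivative equation $d'(\theta,\widehat{\gamma}(\theta))=0$ from the proof of Proposition \ref{S3} is satisfied for every $\theta\in(-\pi,\pi]$; since $w_2=0$ forces $\widehat{\gamma}(\theta)=0$ identically, the one-parameter family \eqref{clgherm4} is obtained by letting $\theta$ vary.

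There is no real obstacle here: the corollary is an immediate specialization, and the only care needed is to verify that each $\theta$-dependent quantity in Proposition \ref{S3} simplifies consistently under $w_2=0$, in particular that the non-uniqueness in the degenerate case still corresponds to a constant (zero) value of $\gamma$ rather than a second free parameter.
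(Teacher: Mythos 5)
Your proof is correct and follows essentially the same route as the paper, which likewise obtains the corollary by specializing Proposition \ref{S3} to $w_2=\mathrm{Trace}(A)=0$, so that $\widehat{\gamma}=0$ and $\widehat{\theta}=\arg(\mathrm{Trace}(A^2))$. The paper states only this observation in one line; your additional verifications (the reduction of the condition $mw_1\neq w_2^2$ to \eqref{cond2}, the substitution yielding the distance formula, and the collapse of the degenerate family to \eqref{clgherm4}) are exactly the details it leaves implicit.
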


\begin{proof}
The result follows by observing that the optimal values of $\widehat{\theta}$
and $\widehat{\gamma}$ determined by Proposition 
\ref{S3} are given by $\widehat{\theta}= \arg(\mathrm{Trace}(A^2))$ and 
$\widehat{\gamma}=0$.  
\end{proof}

We refer to a generalized Hermitian matrix $A\in\C^{m\times m}$, whose eigenvalues for 
suitable $\varphi\in (-\pi,\pi]$ and $\alpha\in \C$ satisfy
\[
\lambda_i=\rho_i\mathrm{e}^{\mathrm{i}\varphi}+\alpha,\,\,  \mathrm{with} 
\,\,\rho_i\geq 0, \;\;\; 1\leq i\leq m,
\] 
as a generalized Hermitian positive semidefinite matrix. We denote the set
of generalized Hermitian positive semidefinite matrices by ${\mathbb G}_+$.

\begin{proposition}\label{S5}
The matrix $A\in\C^{m\times m}$ is generalized Hermitian positive semidefinite if and only
if there are constants $\varphi\in (-\pi,\pi]$ and $\alpha\in \C$ such that
\begin{equation}
A=\mathrm{e}^{\mathrm{i}\varphi}B+\alpha\,I,
\end{equation}
where the matrix $B\in\C^{m\times m}$ is Hermitian positive semidefinite.
\end{proposition}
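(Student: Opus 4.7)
The plan is to obtain Proposition \ref{S5} as a direct refinement of Proposition \ref{S1}, reading off the positivity condition through the spectral correspondence $\lambda_i(A)=\mathrm{e}^{\mathrm{i}\varphi}\lambda_i(B)+\alpha$. The two directions are essentially symmetric, and neither requires new machinery beyond what was used for Proposition \ref{S1}.

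First I would handle the forward direction. Suppose $A$ is generalized Hermitian positive semidefinite. By definition there exist $\varphi\in(-\pi,\pi]$ and $\alpha\in\C$ such that the eigenvalues of $A$ can be written $\lambda_i=\rho_i\mathrm{e}^{\mathrm{i}\varphi}+\alpha$ with $\rho_i\geq 0$. In particular $A$ is generalized Hermitian, so Proposition \ref{S1} yields a unitary $U$ and a real diagonal $D=\mathrm{diag}[d_1,\ldots,d_m]$ with $A=U(\mathrm{e}^{\mathrm{i}\varphi}D+\alpha I)U^*$ and $B:=UDU^*$ Hermitian. Comparing with the assumed form of the eigenvalues shows $d_i=\rho_i\geq 0$ (up to a permutation), hence $B$ is Hermitian positive semidefinite and $A=\mathrm{e}^{\mathrm{i}\varphi}B+\alpha I$.

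For the converse, assume $A=\mathrm{e}^{\mathrm{i}\varphi}B+\alpha I$ with $B$ Hermitian positive semidefinite. Write the spectral decomposition $B=UDU^*$ with $D=\mathrm{diag}[d_1,\ldots,d_m]$, $d_i\geq 0$. Then
\[
A=U(\mathrm{e}^{\mathrm{i}\varphi}D+\alpha I)U^*,
\]
so $A$ is unitarily diagonalizable with eigenvalues $\lambda_i=\mathrm{e}^{\mathrm{i}\varphi}d_i+\alpha$, $d_i\geq 0$. Setting $\rho_i:=d_i$ matches the defining form of a generalized Hermitian positive semidefinite matrix.

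The only subtle point I anticipate is bookkeeping about the nonuniqueness of $\varphi$ and $\alpha$ (they are not uniquely determined, e.g., when $B$ is a scalar multiple of $I$ or when all $d_i$ coincide), but this does not affect the logical equivalence since we only need existence of some valid pair $(\varphi,\alpha)$ on each side. Thus no obstacle of substance arises, and the proposition follows by invoking Proposition \ref{S1} together with a one-line check on the sign of the eigenvalues of the associated Hermitian factor.
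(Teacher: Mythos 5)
Your proposal is correct and takes essentially the same approach as the paper: the paper's proof is a one-liner invoking the proof of Proposition \ref{S1} with the added observation that the diagonal entries of $D$ are nonnegative, and your argument simply spells out this eigenvalue bookkeeping ($d_i=\rho_i\geq 0$) explicitly in both directions.
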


\begin{proof}
The proposition follows from the proof of Proposition \ref{S1}, where we use the fact that 
the diagonal entries of the diagonal matrix $D$ are nonnegative. 
\end{proof}

We are interested in measuring the distance between $A$ and the set 
${\mathbb G}_+$ in the Frobenius norm. We deduce from
(\ref{clgherm}) that, if (\ref{cond}) holds, then the unique closest generalized Hermitian 
matrix is of the form
\begin{equation}\label{matid}
\widehat{A}=\frac{A+\mathrm{e}^{\mathrm{i}\widehat{\theta}}A^*+\widehat{\gamma}\,
\mathrm{e}^{\mathrm{i}\frac{\widehat{\theta}+\pi}{2}}I}{2}=
\mathrm{e}^{\mathrm{i}\frac{\widehat{\theta}}{2}}\widetilde{A}+
\frac{\widehat{\gamma}}{2}\,\mathrm{e}^{\mathrm{i}\frac{\widehat{\theta}+\pi}{2}}I,
\end{equation}
where $\widetilde{A}$ denotes the Hermitian part of
$\mathrm{e}^{-\mathrm{i}\frac{\widehat{\theta}}{2}}A$.
The identity \eqref{matid} shows that the unique closest generalized Hermitian positive 
semidefinite matrix to $A$ can be written as
\[
\widehat{A}_+:=\mathrm{e}^{\mathrm{i}\frac{\widehat{\theta}}{2}}\widetilde{A}_+
+\frac{\widehat{\gamma}}{2}\,\mathrm{e}^{\mathrm{i}\frac{\widehat{\theta}+\pi}{2}}I,
\]
where $\widetilde{A}_+$ denotes the Hermitian positive semidefinite matrix closest to 
$\mathrm{e}^{-\mathrm{i}\frac{\widehat{\theta}}{2}}A$. The construction of 
$\widetilde{A}_+$ can be easily obtained following \cite{Hi}. Thus, the distance
\[
{\rm dist}_F(A,{\mathbb G}_+)=\|A-\widehat{A}_+\|_F\geq {\rm dist}_F(A,{\mathbb G})
\]
can be computed similarly as (\ref{distspsd}), taking into account the squared sum of the
negative eigenvalues of $\widetilde{A}$.

\section{Some preconditioning techniques}\label{sec4}
Preconditioning is a popular technique to improve the rate of convergence of GMRES when
applied to the solution of linear systems of equations that are obtained by discretizing
a well-posed problem; see, e.g., \cite{Saadbook} for a discussion and references. This 
technique replaces a linear system of equations (\ref{linsys}) by a left-preconditioned 
system
\begin{equation}\label{leftprec}
MA\bx=M\bb
\end{equation}
or by a right-preconditioned system
\begin{equation}\label{rightprec}
AM\by=\bb,\qquad \bx:=M\by,
\end{equation}
and applies GMRES to the solution of one of these preconditioned systems. The matrix 
$M\in\C^{m\times m}$ is referred to as a preconditioner. In the well-posed setting, $M$ 
typically is chosen so that 
the iterates generated by GMRES when applied to (\ref{leftprec}) or (\ref{rightprec})
converge to the solution faster than iterates determined by GMRES applied to the original
(unpreconditioned) linear system of equations \eqref{linsys}. We would like $M$ to have a 
structure that allows rapid evaluation of matrix-vector products $M\by$, $\by\in\C^m$. One
may apply left- and right-preconditioners simultaneously, too.

Preconditioning also can be applied to the solution of linear discrete ill-posed problems 
(\ref{linsys}); see, e.g., \cite{DMR,DNR,HNP,HJ0,NR014,RY2}. The aim of the preconditioner $M$ in 
this context is to determine a solution subspace ${\K}_k(MA,M\bb)$ for problem
\eqref{leftprec}, or a solution subspace $M{\K}_k(AM,\bb)$ for
problem \eqref{rightprec}, that contain accurate approximations of $\bx_{\rm exact}$
already when their dimension $k$ is small. Moreover, we
would like to choose $M$ so that the error $\be$ in $\bb$ is not severely amplified and 
propagated into the computed iterates when solving (\ref{leftprec}) or (\ref{rightprec}). 
We seek to achieve these goals by choosing particular preconditioners $M$ such that the 
matrices $MA$ or $AM$ are close to the sets $\H_+$ or $\mathbb{G}_+$. We will also 
comment on the distance of these matrices to the sets $\H$ and $\A$. We remark that 
right-preconditioning generally is more useful than left-preconditioning, because the 
GMRES residual norm for the system (\ref{rightprec}) can be cheaply evaluated by computing
the residual norm of a low-dimensional system of equations. This is a favorable feature 
when a stopping criterion based on the residual norm is used, such as the discrepancy
principle. Henceforth, we focus on right-preconditioning. We describe several novel approaches to construct a preconditioner that can be effective in a variety of situations. 

When the matrix $A$ is a shift operator, GMRES may not be able 
to deliver an accurate approximation of $\bx_{\rm exact}$ within a few iterations (this is 
the case of Example 2.1). To remedy this difficulty, we propose to approximate $A$ by a
circulant matrix $C_A$. We may, for instance, determine $C_A$ as the solution of the 
matrix nearness problem discussed in \cite{CJ,TC,Ng},
\begin{equation}\label{DA}
\min_{C\in\C^{m\times m}~{\rm circulant}}\|C-A\|_F,
\end{equation}
and use the preconditioner 
\begin{equation}\label{prec3a}
M=C_A^{-1}.
\end{equation}
The minimization problem \eqref{DA} easily can be solved by using the spectral 
factorization 
\begin{equation}\label{specCA}
C_A=WD_AW^*,
\end{equation}
where the matrix $D_A\in\C^{m\times m}$ is diagonal and $W\in\C^{m\times m}$ is a unitary
fast Fourier transform (FFT) matrix; see \cite{Da} for details. Hence, 
\[
\|C_A-A\|_F=\|D_A-W^*AW\|_F,
\]
and it follows that $D_A$ is made up of the diagonal entries of $W^*AW$. The computation 
of the matrix $D_A$, with the aid of the FFT, requires ${\mathcal O}(m^2\log_2(m))$ 
arithmetic floating point operations (flops); see \cite{CJ,TC,Ng} for details. Alternatively, a circulant preconditioner may be computed as the solution of the matrix 
nearness problem 
\begin{equation}\label{tyrt}
\min_{C\in\C^{m\times m}\\~{\rm circulant}} \| I - C^{-1}A\|_F.
\end{equation} 
This minimization problem is discussed in \cite{DNR,ST,Ty}. The solution is given by
$C_{AA^*}C_{A^*}^{-1}$; see \cite{Ty}. The flop count for solving \eqref{tyrt}, by using
the FFT, also is ${\mathcal O}(m^2\log_2(m))$; see \cite{CJ,Ng,Ty}.  

A cheaper way to determine a circulant preconditioner \eqref{specCA} is to let 
$\bx\in\C^m$ be a random vector, define $\by:=A\bx$, and then determine the diagonal 
matrix $D_A$ in \eqref{specCA} by requiring that $\by=C_A\bx$. This gives
\begin{equation}\label{C3} 
D_A={\rm diag}[(W^*\by)/(W^*\bx)]\,,
\end{equation}
where the vector division is component-wise. The 
computation of $D_A$ in this way only requires the evaluation of two fast Fourier 
transforms and $m$ scalar divisions, which only demands ${\mathcal O}(m\log_2(m))$ flops. We remark that further approaches to construct circulant preconditioners are discussed in 
the literature; see \cite{CJ,Ng}. Moreover, $\rm{e}^{\rm{i}\theta}$-circulants, which 
allow an angle $\theta$ as an auxiliary parameter can be effective preconditioners; 
they generalize the preconditioners \eqref{DA} and \eqref{tyrt} and also can be 
constructed with ${\mathcal O}(m^2\log_2(m))$ flops; see \cite{NR11,NR014}. 

Having determined the preconditioner $M$, we apply the Arnoldi process to the matrix $AM$
with initial vector $\bb$. The evaluation of each matrix-vector product with $M$ can be 
carried out in ${\mathcal O}(m\log_2(m))$ flops by using the FFT for both circulant and 
$\rm{e}^{\rm{i}\theta}$-circulant preconditioners. Iterations are carried out until the 
discrepancy principle is satisfied. Let $\by_k$ be the solution of (\ref{rightprec}) so
obtained. Then $\bx_k=M\by_k$ is an approximation of $\bx_{\rm exact}$. 

A generic approach to determine a preconditioner $M$ that makes $AM$ closer to the set
$\H_+$ than $A$ is to carry out $\kP$ steps of the Arnoldi process applied to $A$ with 
initial vector $\bb$. Assuming that no breakdown occurs, this yields a decomposition of 
the form (\ref{arndec}) with $k$ replaced by $\kP$, and we define the approximation
\begin{equation}\label{Akprec}
A_{\kP}:=V_{\kP+1} H_{\kP+1,\kP}V^*_{\kP}
\end{equation}
of $A$. If $A_{\kP}$ contains information about the dominant singular values of the matrix
$A$ only, then $A_{\kP}$ is a regularized approximation of $A$. This property is 
illustrated numerically in \cite{GNR1} for severely ill-conditioned matrices. Moreover, in
a continuous setting and under the assumption that $A$ is a Hilbert--Schmidt operator of 
infinite rank \cite[Chapter 2]{Ri}, it is shown in \cite{N17} that the SVD of $A$ can be 
approximated by computing an Arnoldi decomposition of $A$. This property is inherited in 
the discrete setting of the present paper, whenever a suitable discretization of a 
Hilbert--Schmidt operator is used. 

The approximation (\ref{Akprec}) suggests the simple preconditioner 
\begin{equation}\label{prec1}
M:=A_{\kP}^*.
\end{equation}
The rank of this preconditioner is at most $\kP$ and, therefore, GMRES 
applied to the solution of (\ref{rightprec}) will break down within $\kP$ steps; see,
e.g., \cite{BW,RY} for discussions on GMRES applied to linear systems of equations with a
singular matrix. We would like to choose $\kP$ large enough so that GMRES applied to 
(\ref{rightprec}) does not break down before a sufficiently accurate approximation of 
$\bx_{\rm exact}$ has been determined. The following proposition sheds light on some 
properties of the matrix $AM$ when $M$ is defined by (\ref{prec1}).

\begin{proposition}\label{prop2}
Assume that $\kP$ steps of the Arnoldi process applied to $A$ with initial vector
$\bb$ can be carried out without breakdown, and let the preconditioner $M$ be defined by 
(\ref{prec1}). Then $AM$ is Hermitian positive semidefinite with rank $\kP$, and 
$\mathcal{R}(AM)\subset\mathcal{R}(V_{\kprec +1})$.
\end{proposition}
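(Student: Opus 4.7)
The plan is to exploit the partial Arnoldi decomposition \eqref{arndec} to rewrite $AM$ in a symmetric factored form. Substituting the definition \eqref{Akprec} of $A_{\kP}$ into \eqref{prec1} gives
\[
M = A_{\kP}^* = V_{\kP}\,H_{\kP+1,\kP}^*\,V_{\kP+1}^*,
\]
and then applying the Arnoldi relation $AV_{\kP}=V_{\kP+1}H_{\kP+1,\kP}$ to the expression $AM$ lets me recognize
\[
AM = A V_{\kP} V_{\kP}^* A^* = (AV_{\kP})(AV_{\kP})^*.
\]
From this the Hermitian positive semidefiniteness is immediate, since any matrix of the form $BB^*$ is Hermitian and satisfies $\bx^*BB^*\bx=\|B^*\bx\|^2\geq 0$.

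For the range inclusion I would again use the Arnoldi identity to write
\[
AM = V_{\kP+1}\,H_{\kP+1,\kP}H_{\kP+1,\kP}^*\,V_{\kP+1}^*,
\]
so every vector of the form $AM\bz$ lies in the column space of $V_{\kP+1}$, giving $\mathcal{R}(AM)\subset \mathcal{R}(V_{\kP+1})$.

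The remaining point is the rank claim, which I expect to be the most delicate (but still routine). Because $AM = V_{\kP+1}H_{\kP+1,\kP}H_{\kP+1,\kP}^*V_{\kP+1}^*$ and the columns of $V_{\kP+1}$ are orthonormal, $\mathrm{rank}(AM)=\mathrm{rank}(H_{\kP+1,\kP}H_{\kP+1,\kP}^*)=\mathrm{rank}(H_{\kP+1,\kP})$. The hypothesis that $\kP$ Arnoldi steps run without breakdown means every subdiagonal entry $h_{j+1,j}$ ($j=1,\dots,\kP$) produced in step 7 of Algorithm~\ref{alg:arnoldi} is nonzero. Discarding the first row of the upper Hessenberg matrix $H_{\kP+1,\kP}\in\C^{(\kP+1)\times \kP}$ leaves a square upper triangular matrix with those nonzero subdiagonal entries as its diagonal, hence nonsingular. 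Therefore $H_{\kP+1,\kP}$ has full column rank $\kP$, and $\mathrm{rank}(AM)=\kP$. Assembling the three observations completes the proof.
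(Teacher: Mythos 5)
Your proof is correct and follows essentially the same route as the paper: both substitute the Arnoldi decomposition \eqref{arndec} into $AM=AA_{\kP}^*$ to obtain the factored form $AM=V_{\kP+1}H_{\kP+1,\kP}H_{\kP+1,\kP}^*V_{\kP+1}^*=(AV_{\kP})(AV_{\kP})^*$, from which Hermitian positive semidefiniteness and $\mathcal{R}(AM)\subset\mathcal{R}(V_{\kP+1})$ are immediate. If anything, you are slightly more complete than the paper, whose proof only notes that $C_{\kP+1,\kP}=V_{\kP+1}H_{\kP+1,\kP}$ has rank \emph{at most} $\kP$: your argument that the no-breakdown hypothesis makes all subdiagonal entries $h_{j+1,j}$ nonzero, so that deleting the first row of $H_{\kP+1,\kP}$ leaves a nonsingular upper triangular matrix and hence $H_{\kP+1,\kP}$ has full column rank, is precisely the detail needed to justify that the rank equals $\kP$ exactly.
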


\begin{proof}
From (\ref{Akprec}) and the decomposition (\ref{arndec}), with $k$ replaced by
$\kP$, it is immediate to verify that
\begin{eqnarray*}
AM&=&AA^*_{\kP}=
AV_{\kP}H_{\kP+1,\kP}^*V_{\kP+1}^* \\ 
&=& V_{\kP+1}H_{\kprec+1,\kP} H_{\kP+1,\kP}^*V_{\kP+1}^* =
C_{\kprec+1,\kP} C_{\kprec+1,\kP}^*, 
\end{eqnarray*}
where
\begin{equation}\label{Ckprec}
C_{\kprec+1,\kP} = V_{\kprec + 1}H_{\kprec+1,\kP}\in\C^{m\times \kprec}
\end{equation}
is a matrix of rank at most $\kprec$. Finally, for any $\bz\in\C^{m}$, we have
\[
AM\bz = V_{\kP+1}H_{\kP+1,\kP}H_{\kP+1,\kP}^*V_{\kP+1}^*\bz\,.
\]
This shows that $AM\bz\in V_{\kprec+1}$.
\end{proof}

Since $AA^*_{\kP}$ is singular, problem (\ref{rightprec}) should be considered a
least-square problem, i.e., instead of solving \eqref{rightprec} one should compute
\begin{equation}\label{LSrightprec}
\by = \arg\min_{\widehat{\by}\in\C^m}\left\Vert 
C_{\kprec+1,\kP}C_{\kprec+1,\kP}^*\widehat{\by}-\bb\right\Vert,\qquad \bx=A_{\kprec}^*\by\,,
\end{equation}
where $C_{\kprec+1,\kP}$ is defined by (\ref{Ckprec}). It follows from the definition 
\eqref{Akprec} of $A_{\kP}$, and the fact that 
$\mathcal{R}(V_{\kprec})=\K_{\kprec}(A,\bb)$, that the solution $\bx$ of 
(\ref{LSrightprec}) belongs to $\K_{\kprec}(A,\bb)$. A regularized solution of 
the minimization problem (\ref{LSrightprec}) can be determined in several ways.
For instance, one can apply a few steps of the Arnoldi process (Algorithm \ref{alg:arnoldi}) 
to compute an approximate solution of the least-squares problem \eqref{LSrightprec}, i.e.,
one applies the Arnoldi process to the matrix $C_{\kprec+1,\kP}C_{\kprec+1,\kP}^*$ with initial 
vector $\bv_1=\bb/\|\bb\|$. We note that the latter application of the Arnoldi process  
does not require additional matrix-vector product evaluations 
with the matrix $A$. Alternatively, we may determinine a regularized solution of 
\eqref{LSrightprec} by using Tikhonov regularization or applying the truncated singular 
value decomposition (TSVD) of the matrix $C_{\kprec+1,\kP}$. We will discuss the latter
regularization techniques in detail in Section \ref{sec5}. Computational experiments 
reported in Section \ref{sec7} show that it is often possible to determine a meaningful 
approximation of  $\bx_{\rm exact}$ by computing a regularized solution of 
(\ref{LSrightprec}) even when GMRES applied to the original problem (\ref{linsys}) yields 
a poor approximation of $\bx_{\rm exact}$.

The approximation (\ref{Akprec}) of $A$ also can be used to define the preconditioner
\begin{equation}\label{prec2}
M:=A_{\kP}^* + (I-V_{\kP}V^*_{\kP})=V_{\kP}H_{\kP+1,\kP}^*V_{\kP+1}^*+(I-V_{\kP}V_{\kP}^*).
\end{equation}
The number of steps $\kP$ should be chosen so that the matrix $AM$ is fairly close to 
the set $\H_+$. Differently from the preconditioner \eqref{prec1}, the preconditioner 
(\ref{prec2}) has rank $m$, as $\mathcal{R}(M) = 
\mathcal{R}(V_{\kprec})\oplus \mathcal{R}(V_{\kprec}^{\perp})$. The preconditioned 
coefficient matrix defined by the preconditioner \eqref{prec2},
\begin{equation}\label{AM2}
\begin{array}{rcl}
AM&=&AA^*_{\kP}+A(I-V_{\kP}V_{\kP}^*)\\
&=& V_{\kP+1}H_{\kP+1,\kP} H_{\kP+1,\kP}^*V_{\kP+1}^* + A(I-V_{\kP}V_{\kP}^*),
\end{array}
\end{equation}
is non-Hermitian. A few steps of the Arnoldi process (Algorithm \ref{alg:arnoldi}) can be 
applied to the matrix \eqref{AM2} to determine a regularized solution of \eqref{rightprec}. 
However, differently from the situation when using the preconditioner (\ref{prec1}), this requires additional 
matrix-vector product evaluations with $A$. Regularization of \eqref{rightprec} 
when the preconditioner is defined by \eqref{prec2} can again be achieved by applying Tikhonov 
or TSVD regularization. An analogue of Proposition \ref{prop2} does not hold for the preconditioner $M$ defined by 
(\ref{prec2}). Instead, we can show the following result.

\begin{proposition}\label{prop3}
Assume that $\kP+j$ steps of the Arnoldi process applied to $A$ with initial
vector $\bb$ can be carried out without breakdown, and let the preconditioner $M$ be 
defined by (\ref{prec2}). Then the iterate $\by_j$ determined in the $j$th step of GMRES
applied to the preconditioned system (\ref{rightprec}) with initial approximate solution 
$\by_0=\bzero$ belongs to the Krylov subspace $\K_{\kP+j}(A,\bb)$.
\end{proposition}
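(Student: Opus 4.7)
The plan is to show the subspace inclusion $\K_j(AM,\bb)\subseteq \K_{\kP+j}(A,\bb)$; since GMRES with $\by_0=\bzero$ applied to the preconditioned system produces $\by_j\in\K_j(AM,\bb)$, this immediately gives the claim. The assumption that $\kP+j$ Arnoldi steps can be carried out without breakdown ensures that all the Krylov subspaces and projections appearing below are well defined.

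The argument proceeds by induction on $i$: I claim that $(AM)^i\bb\in\K_{\kP+i}(A,\bb)$ for $i=0,1,\ldots,j-1$. The base case $i=0$ is immediate since $\bb\in\K_1(A,\bb)\subseteq\K_{\kP}(A,\bb)$. Two structural facts drive the inductive step. First, because $\bb$ is a multiple of $\bv_1$, the column $V_\kP V_\kP^*$ acts as the identity on $\bb$, and more generally $V_\kP V_\kP^*\bw\in\mathcal{R}(V_\kP)=\K_\kP(A,\bb)$ for every $\bw\in\C^m$. Second, from the definition \eqref{Akprec} one has $A_\kP^*\bw=V_\kP H_{\kP+1,\kP}^*V_{\kP+1}^*\bw\in\mathcal{R}(V_\kP)=\K_\kP(A,\bb)$ for every $\bw$.

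For the inductive step, assume $\bw:=(AM)^{i-1}\bb\in\K_{\kP+i-1}(A,\bb)$ and split
\[
AM\bw = AA_\kP^*\bw + A(I-V_\kP V_\kP^*)\bw.
\]
The first term lies in $A\cdot\K_\kP(A,\bb)\subseteq \K_{\kP+1}(A,\bb)\subseteq\K_{\kP+i}(A,\bb)$. For the second term, both $\bw$ and $V_\kP V_\kP^*\bw$ sit in $\K_{\kP+i-1}(A,\bb)$ (the latter because $\mathcal{R}(V_\kP)\subseteq\K_{\kP+i-1}(A,\bb)$), so $(I-V_\kP V_\kP^*)\bw\in\K_{\kP+i-1}(A,\bb)$, and hence $A(I-V_\kP V_\kP^*)\bw\in\K_{\kP+i}(A,\bb)$. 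Adding the two contributions closes the induction.

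Having established $(AM)^i\bb\in\K_{\kP+i}(A,\bb)\subseteq\K_{\kP+j}(A,\bb)$ for every $0\le i\le j-1$, we conclude that $\K_j(AM,\bb)\subseteq\K_{\kP+j}(A,\bb)$, and therefore $\by_j\in\K_{\kP+j}(A,\bb)$. I do not expect any real obstacle: the whole proof is driven by the two facts that $A_\kP^*$ maps into $\mathcal{R}(V_\kP)$ and that $I-V_\kP V_\kP^*$ is the orthogonal projector onto $\mathcal{R}(V_\kP)^\perp$, which preserves the ambient Krylov subspace $\K_{\kP+i-1}(A,\bb)$. The only point to be mildly careful about is verifying that applying $A$ to an element of $\K_{\kP+i-1}(A,\bb)$ produces an element of $\K_{\kP+i}(A,\bb)$, which is standard and valid as long as $\kP+j$ Arnoldi steps exist, as assumed.
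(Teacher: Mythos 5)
Your proof is correct and follows essentially the same route as the paper's: the same decomposition $AM\bw = AA_{\kP}^*\bw + A(I-V_{\kP}V_{\kP}^*)\bw$, the same range facts ($A_{\kP}^*$ maps into $\mathcal{R}(V_{\kP})=\K_{\kP}(A,\bb)$, and $A$ advances $\K_{\kP+i-1}(A,\bb)$ into $\K_{\kP+i}(A,\bb)$), and the same induction establishing $\K_j(AM,\bb)\subseteq\K_{\kP+j}(A,\bb)$. If anything, your version inducting on the powers $(AM)^i\bb$ makes the subspace inclusion explicit, which the paper's induction on the iterates $\by_i$ uses only implicitly.
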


\begin{proof}
We show the proposition by induction. It is immediate to verify that
\[
\by_1\in\K_1(AM,\bb) =\spn\{\bb\}=\K_1(A,\bb)\subset \K_{\kprec}(A,\bb)\subset\K_{\kprec+1}(A,\bb)\,.
\]
Assume that $\by_i\in\K_{\kprec+i}(A,b)$. Then, since
\[
\by_{i+1}\in\K_{i+1}(AM,\bb)\subset\spn\{\bb,\,AM\K_{\kprec + i}(A,\bb)\}\,,
\]
$\by_{i+1}$ is a linear combination of vectors of this subspace, i.e.,
\[
\by_{i+1}=s_1\bb+AMV_{\kprec+i}\bs_{\kprec+i}=s_1\bb+V_{\kprec+1}\bs_{\kprec+1}+V_{\kprec+i+1}\bs_{\kprec+i+1}\,,
\]
where $s_1\in\C$, $\bs_{\kprec+1}\in\C^{\kprec+1}$, $\bs_{\kprec+i}\in\C^{\kprec+i}$, and 
$\bs_{\kprec+i+1}\in\C^{\kprec+i+1}$. Here we have used the definition \eqref{prec2} of 
$M$ and the Arnoldi decomposition (\ref{arndec}), with $k$ replaced by $\kprec$. Hence, 
$\by_{i+1}\in\mathcal{R}(V_{\kprec+i+1}) = \K_{\kprec+i+1}(A,\bb)$.
\end{proof}

Assume that the conditions of Proposition \ref{prop3} hold, and let
\linebreak[4]$\by_j = V_{\kprec + j}\bs_{\kprec + j}$ with 
$\bs_{\kprec + j}\in\C^{\kprec + j}$. Then the corresponding approximate solution $\bx_j$
of (\ref{rightprec}) satisfies
\[
\bx_j=M\by_j = MV_{\kprec + j}\bs_{\kprec + j}\in\mathcal{R}(V_{\kprec + j})=
\K_{\kprec + j}(A,\bb)\,.
\]
Hence, application of the GMRES method with the right-preconditioner 
(\ref{prec2}) determines an approximate solution in the (unpreconditioned) Krylov subspace \linebreak[4]
$\K_{\kP+j}(A,\bb)$. 

We conclude this section by considering two more preconditioners, that are related to (\ref{prec1}) and 
(\ref{prec2}), and which may enhance the regularization properties of Arnoldi methods even if they are not 
designed with the goal of reducing the distance of $A$ to the sets $\H$ or $\H_+$. Assume, as above, 
that the Arnoldi algorithm does not break down during the first $\kprec$ steps. Then the matrix 
$A_{\kprec}$ defined by (\ref{Akprec}) can be computed, and one may use 
\begin{equation}\label{prec3}
M := A_{\kprec}\,.
\end{equation}
as a preconditioner. Similarly to (\ref{prec1}), this preconditioner has at most rank $\kprec$ and, assuming 
that $A_{\kprec}$ only contains information about the $\kprec$ dominant singular values of $A$, $M$ may be 
regarded as a regularized approximation of $A$. Note that, by exploiting the Arnoldi decomposition (\ref{arndec}) 
with $k$ replaced by $\kprec+1$, one obtains the following expression
\begin{equation}\label{Aprec3}
AM = V_{\kprec+2}H_{\kprec+2,\kP+1}H_{\kprec+1,\kP}V_{\kprec}^*.
\end{equation}
We note that when applying a few (at most $\kP$) steps of GMRES to compute an approximate 
solution of the preconditioned system (\ref{rightprec}), no additional matrix-vector 
product evaluations with the matrix $A$ are necessary, in addtion to the $\kprec+1$ 
matrix-vector product evaluations required to determine the right-hand side of 
(\ref{Aprec3}). The iterate $\bx_j$ determined at the $j$th step of GMRES applied to the 
preconditioned system (\ref{rightprec}) belongs to 
$\mathcal{R}(V_{\kprec + 2})=\K_{\kprec + 2}(A,\bb)$.

The preconditioner 
\begin{equation}\label{prec4}
M:=A_{\kP} + (I-V_{\kP}V^*_{\kP})=V_{\kP+1}H_{\kP+1,\kP}V_{\kP}^*+(I-V_{\kP}V_{\kP}^*).
\end{equation}
is analogous to (\ref{prec2}). This preconditioner also was considered in \cite{LRT} in 
the framework of the solution of a sequence of slowly-varying linear systems of equations.
Similarly to (\ref{prec3}), the preconditioner (\ref{prec4}) does guarantee that the 
precondioned matrix $AM$ is close to the set $\H_+$. By using the Arnoldi decomposition 
(\ref{arndec}) with $k$ replaced by $\kP+1$, we obtain 
\[
AM=AA_{\kP}+A(I-V_{\kP}V_{\kP}^*)
= V_{\kP+2}H_{\kP+2,\kP+1} H_{\kP+1,\kP}V_{\kP}^* + A(I-V_{\kP}V_{\kP}^*).
\]
It is evident that, even if $\kP+1$ steps of the Arnoldi process have been carried out to
define $M$, additional matrix-vector products with $A$ are required when applying the Arnoldi 
process to the preconditioned system (\ref{rightprec}). Using the same arguments as in 
Proposition \ref{prop3}, one can show that, if $\kP + j$ steps of the Arnoldi process applied 
to $A$ with initial vector $\bb$ can be carried out without breakdown, then the iterate $\by_j$ 
determined at the $j$th iteration of GMRES applied to the preconditioned system (\ref{rightprec}) 
and the corresponding approximate solution $\bx_j=M\by_j$ of \eqref{linsys} belong to 
$\K_{\kP+j}(A,\bb)$. We note that  Tikhonov or TSVD regularization can be applied when solving 
the preconditioned system (\ref{rightprec}) with either one of the preconditioners (\ref{prec3}) 
or (\ref{prec4}).

\section{Solving the preconditioned problems}\label{sec5}
As already suggested in the previous section, instead of using GMRES to solve the 
preconditioned system (\ref{rightprec}) with one of the preconditioners described, one may
wish to apply additional regularization in order to determine an approximate solution of 
\eqref{linsys} of higher quality. In the following we discuss application 
of Tikhonov and TSVD regularilzation. We refer to the solution methods so obtained as
the Arnoldi--Tikhonov and Arnoldi-TSVD methods, respectively. Due to the additional 
regularization, both these method allow the use of a solution subspace of larger dimension 
than preconditioned GMRES without additional regularization. This helps reduce so-called 
``semi-convergence''. 

The Arnoldi--Tikhonov method for (\ref{rightprec}) determines an approximate solution 
$\bx_{\mu}$ of (\ref{linsys}) by first computing the solution $\by_\mu$ of the Tikhonov 
minimization problem
\begin{equation}\label{tik1}
\min_{\by\in\K_k(AM,\bb)}\{\|AM\by-\bb\|^2+\mu\|\by\|^2\},
\end{equation}
where $\mu>0$ is a regularization parameter to be specified, and then evaluates the
approximation $\bx_\mu = M\by_\mu$ of $\bx_{\rm exact}$. The minimization problem 
\eqref{tik1} has a unique solution for any $\mu>0$. Application of $k$ steps of the 
Arnoldi process to the matrix $AM$ with initial vector $\bb$ gives the Arnoldi 
decomposition
\begin{equation}\label{arndec2}
AMV_k=V_{k+1}H_{k+1,k}\,,
\end{equation}
which is analogous to (\ref{arndec}). Using (\ref{arndec2}), the minimization
problem \eqref{tik1} can be expressed as the reduced Tikhonov minimization problem
\begin{equation}\label{tik2}
\min_{\bz\in\C^k}\{\|H_{k+1,k}\bz-\|\bb\|\be_1\|^2+\mu\|\bz\|^2\},
\end{equation}
whose minimizer $\bz_\mu$ gives the approximate solution $\by_\mu:=V_k\bz_\mu$ of 
(\ref{tik1}), so that\linebreak[4] $\bx_\mu:=M\by_\mu$ is an approximate solution of (\ref{linsys}). 

The Arnoldi-TSVD method seeks to determine an approximate solution of (\ref{rightprec}) by
using a truncated singular value decomposition of the (small) matrix $H_{k+1,k}$ in 
(\ref{arndec2}). Let $\by:=V_k\bz$. Then, using (\ref{arndec2}), we obtain
\begin{equation}\label{reduced}
\min_{\by\in\K_k(AM,\bb)}\|AM\by-\bb\|=\min_{\bz\in\C^k}\|H_{k+1,k}\bz-\|\bb\|\be_1\|.
\end{equation}
Let $H_{k+1,k}=U_{k+1}\Sigma_{k}W_k^*$ be the singular value decomposition. Thus, the
matrices $U_{k+1}\in\C^{(k+1)\times(k+1)}$ and $W_k\in\R^{k\times k}$ are unitary, and
\[
\Sigma_{k}={\rm diag}[\sigma_1^{(k)},\sigma_2^{(k)},\ldots,\sigma_k^{(k)}]\in\R^{(k+1)\times k} 
\]
is diagonal (and rectangular), with nonnegative diagonal entries ordered according to
$\sigma_1^{(k)}\geq\sigma_2^{(k)}\geq\ldots\geq\sigma_k^{(k)}\geq 0$. Define the diagonal
matrix 
\[
\Sigma_{k}^{(j)}={\rm diag}[\sigma_1^{(k)},\ldots,\sigma_j^{(k)},0,\ldots,0]\in
\R^{(k+1)\times k}
\]
by setting the $k-j$ last diagonal entries of $\Sigma_{k}$ to zero, and introduce the
associated rank-$j$ matrix $H_{k+1,k}^{(j)}:=U_{k+1}\Sigma_{k}^{(j)}W_k^*$. Let $\bz^{(j)}$ 
denote the minimal norm solution of 
\begin{equation}\label{red2}
\min_{\bz\in\C^k}\|H_{k+1,k}^{(j)}\bz-\|\bb\|\be_1\|.
\end{equation}
Problem (\ref{red2}) is the truncated singular value decomposition (TSVD) 
method applied to the solution of the reduced minimization problem in the right-hand side of 
(\ref{reduced}); see, e.g., \cite{EHN,Hansenbook} for further details on the TSVD method. 
Once the solution $\bz^{(j)}$ of (\ref{red2}) is computed, we get the approximate solution 
$\by^{(j)}:=V_k\bz^{(j)}$ of (\ref{reduced}), from which we obtain the approximate solution
$\bx^{(j)}:=M\by^{(j)}$ of (\ref{linsys}). A modified TSVD method described in \cite{NR0}
also can be used.

All the methods discussed in this section are inherently multi-parameter, i.e., their 
success depends of the appropriate tuning of more than one regularization parameter. In 
the remainder of this section we will discuss reliable strategies to effectively choose
these parameters. First of all, when the preconditioners (\ref{prec1}), (\ref{prec2}), 
(\ref{prec3}), and (\ref{prec4}) are used, an initial number of Arnoldi iterations, $\kprec$,
has to be carried out. Since we would like these preconditioners $M$ to be suitable 
regularized approximations of the matrix $A$, a natural way to determine $\kprec$ is to 
monitor the expansion of the Krylov subspace $\K_{\kprec}(A,\bb)$. The subdiagonal 
elements $h_{i+1,i}$, $i=1,2,\dots,k$, of the Hessenberg matrix $H_{k+1,k}$ in (\ref{arndec}) are
helpful in this respect; see \cite{GNR2,NR14}. We terminate the initial Arnoldi 
process as soon as an index $\kP$ such that
\begin{equation}\label{stop1}
h_{\kP+1,\kP}<\tau_1^\prime\quad\mbox{and}\quad \frac{\left\vert h_{\kP+1, \kP} - h_{\kP, \kP-1}\right\vert}{h_{\kP, \kP-1}}>\tau_1^{\prime\prime}
\end{equation}
is found. By choosing $\tau_1^\prime$ small, we require some stabilization to take
place while generating the Krylov subspace $\K_k(A,\bb)$; simultaneously, by setting 
$\tau_1^{\prime\prime}$ close to 1, we require the subdiagonal entries of $H_{k+1,k}$ to
stabilize. In terms of regularization, this criterion is partially justified by the bound
\[
\prod\nolimits_{j=1}^{\kprec}h_{j+1,j}\leq \prod\nolimits_{j=1}^{\kprec}\sigma _{j}\,,
\]
see \cite{Mo}, which states that, on geometric average, the sequence 
$\{h_{j+1,j}\}_{j\geq 1}$ decreases faster than the singular values. Numerical 
experiments reported in \cite{GNR2} indicate that the quantity 
$\Vert A-V_{k+1}H_{k+1,k}V_{k}^*\Vert $ decreases to zero as $k$ increases with about the same
rate as the singular values of $A$. More precisely, even though no theoretical results are 
available at present, one can experimentally verify that typically
\[
\Vert A-V_{k+1}H_{k+1,k}V_{k}^*\Vert \simeq {\sigma}_{k+1}^{(k+1)},
\]
where $\sigma_{k+1}^{(k+1)}$ is the $(k+1)$st singular value of $H_{k+2,k+1}$ ordered in
decreasing order. Here $\|\cdot\|$ denotes the spectral norm of the matrix.

Note that there is no guarantee that the above estimate is tight: 
Firstly, we would have equality only if the matrices $V_{k+1}U_{k+1}$ and $V_{k}W_{k}$ 
coincide with the matrices with the right and left singular vectors of the TSVD of the 
matrix $A$. If this is not the case, then we may have 
$\Vert A-V_{k+1}H_{k}V_{k}^*\Vert \gg {\sigma}_{k+1}^{(k+1)}$. Secondly, one cannot 
guarantee that ${\sigma}_{k+1}^{(k+1)}\geq \sigma _{k+1}$. Nevertheless, experimentally
it appears reliable to terminate the Arnoldi iterations when the product 
$p_{\sigma}^{(k)}:={\sigma}_{1}^{(k)}{\sigma}_{k+1}^{(k+1)}$ is sufficiently small, i.e., one should stop 
as soon as
\begin{equation}
p_{\sigma}^{(\kP)}:={\sigma}_{1}^{(\kP)}{\sigma}_{\kP+1}^{(\kP+1)}<\tau_2\,,  \label{stop2}
\end{equation}%
where $\tau_2$ is a user-specified threshold.

Once the preconditioner $M$ has been determined, other regularization parameters should be suitably chosen: 
Namely, the number of preconditioned Arnoldi iterations and, in case the Arnoldi--Tikhonov (\ref{tik2}) 
or Arnoldi-TSVD (\ref{red2}) methods are considered, one also has to determine a value for the regularization 
parameter $\mu>0$ or truncation parameter $j\in\mathbb{N}$, respectively. Since choosing the number of 
Arnoldi iterations is less critical (i.e., one can recover good solutions provided that 
suitable values for $\mu$ or $j$ are set at each iteration), we propose to use the discrepancy principle 
to determine the latter and stop only when a maximum number of preconditioned Arnoldi iterations have been 
computed. Specifically, when using the Arnoldi--Tikhonov method, we choose $\mu$ so that the computed 
solution $\bx_\mu$ satisfies 
\begin{equation}\label{discr2}
\|A\bx_\mu-\bb\|=\tau\delta.
\end{equation}
We remark
that this $\mu$-value can be computed quite rapidly by substituting the Arnoldi 
decomposition (\ref{arndec2}) into (\ref{discr2}); see \cite{CMRS,GNR2,LR} for discussions
on unpreconditioned Tikhonov regularization. There also are other approaches to determining 
the regularization parameter; see, e.g., \cite{Ki,RR}.
When applying the Arnoldi-TSVD method, we choose $j$ as small as
possible so that the discrepancy principle is satisfied, i.e., 
\begin{equation}\label{discr3}
\|H_{k+1,k}^{(j)}\bz^{(j)}-\|\bb\|\be_1\|\leq\tau\delta,
\end{equation}
and tacitly assume that $j<k$; otherwise $k$ has to be increased.
For most reasonable values of
$\tau$ and $\delta$, the equations (\ref{discr2}) and (\ref{discr3}) have a unique solution 
$\mu>0$ and $j>0$, respectively. 

\section{Computed examples}\label{sec7}
This section illustrates the performance of the preconditioners introduced in Section 
\ref{sec4} used with GMRES, or with the Arnoldi--Tikhonov and Arnoldi-TSVD methods described 
in Section \ref{sec5}. The Arnoldi algorithm is implemented with reorthogonalization. A first set of experiments considers moderate-scale test problems 
from \cite{Ha}, and takes into account the preconditioners described in the second part of Section \ref{sec4} only. A second set of experiments considers realistic large-scale problems 
arising in the framework of 2D image deblurring, and also includes comparisons with circulant preconditioners. Comparisons with the unpreconditioned 
counterparts of these methods are presented. All the 
computations were carried out in MATLAB R2016b on a single processor 2.2 GHz Intel Core i7
computer.

To keep the notation light, we let $C_1$, $C_2$, and $C_3$ be the preconditioners obtained by solving (\ref{DA}), (\ref{tyrt}), (\ref{C3}), respectively. Also, we let $M_1$, $M_2$, $M_3$, and $M_4$ be the preconditioners
in (\ref{prec1}), (\ref{prec2}), (\ref{prec3}), (\ref{prec4}), respectively. 
The unpreconditioned GMRES, Arnoldi--Tikhonov, and Arnoldi-TSVD methods are 
referred to as ``GMRES'', ``Tikh'', and ``TSVD'', respectively; their preconditioned 
counterparts are denoted by ``GMRES($P_a$)'', ``Tikh($P_a$)'', and ``TSVD($P_a$)'', 
where $P\in\{C,M\}$ and $a\in\{1,2,3,4\}$. In the following graphs, specific markers are used for the 
different preconditioners: `$\circ$' denotes $C_1$, `$\Box$' denotes $C_2$, `$\lhd$' 
denotes $C_3$, `$\diamond$' denotes $M_4$, and `$\ast$' indicates that no preconditioner
is used. For some test problem we report results for LSQR, with associated marker `$+$'. 
The stopping criteria (\ref{stop1}), (\ref{stop2}), (\ref{discr2}), and (\ref{discr3}) are
used with the parameters $\tau_1^\prime = 10^{-4}$, $\tau_1^{\prime\prime}=0.9$, 
$\tau_2=10^{-10}$, and $\tau = 1.01$. We use the relative reconstruction error, 
defined by $\|\bx_{\rm exact} - \bx_k\|/\|\bx_{\rm exact}\|$ or 
$\|\bx_{\rm exact} - \bx_\mu\|/\|\bx_{\rm exact}\|$, as a measure of the reconstruction 
quality.

\paragraph{First set of experiments}
We consider problems \eqref{linsys} with a nonsymmetric coefficient matrix of size $m=200$
and a right-hand side vector that is affected by Gaussian white noise, with relative noise 
level $\|\be\|/\|\bb\|= 10^{-2}$. For all the tests, the maximum allowed number of Arnoldi 
iterations in Algorithm \ref{alg:arnoldi} is $k=60$. 

\texttt{\textbf{baart}}. This is a Fredholm integral equation of the first kind \cite{Ba}. All the methods 
are tested with and without additional regularization, and with different preconditioners. The standard GMRES 
method is known to perform well on this test problem; nonetheless, we can experimentally show that the new 
preconditioned solvers can outperform GMRES. In the left frames of Figure \ref{fig:baart}, we report the 
relative error history for different preconditioners (also defined with different parameters $\kP$) and for
different solvers. We can clearly see that, if no additional Tikhonov or TSVD regularization is incorporated 
(top left frame of Figure \ref{fig:baart}), ``semi-convergence'' appears after only few steps, though its 
effect is less evident when the preconditioner $M_2$ is used. When additional regularization in Tikhonov or 
TSVD form is incorporated (mid and bottom left frames of Figure \ref{fig:baart}), all the preconditioned methods 
are more stable and exhibit improved relative errors (when compared with GMRES without Tikhonov or TSVD 
regularization. For the present test problem, the preconditioners $M_3$ and $M_4$ perform the best. Indeed, 
the reconstructions displayed in the right-hand side of Figure \ref{fig:baart} show that the boundary values of 
the solution are accurately recovered when $M_3$ or $M_4$ are used. Applying the stopping rule (\ref{stop2}) 
to determine the number of Arnoldi steps that define the preconditioner yields $\kP=9$; the stopping rule 
(\ref{stop1}) gives the same value. We report the behavior of relevant quantities used to set $\kP$ in the 
top frames of Figure \ref{fig:sc}. Note that increasing the number of Arnoldi iterations, $\kP$, is not 
always beneficial. Indeed, a larger $\kP$-value may result in a more severe loss of orthogonality in the 
Arnoldi process (Algorithm \ref{alg:arnoldi}), even if reorthogonalization is used, so that numerical inaccuracies may affect the computation of 
all the preconditioners (\ref{prec1})-(\ref{prec4}). Moreover, preconditioners (\ref{prec1}) and (\ref{prec3}) 
should be a rank-$\kP$ regularized approximation of the original matrix $A^T$ and $A$, respectively; by increasing 
$\kP$ these approximations become increasingly ill-conditioned and, therefore, less successful in regularizing the 
problem at hand. The best relative errors attained by each iterative method (considering different choices of 
solvers and preconditioners) are reported in Table \ref{tab:expo}, where averages over 30 different realizations 
of the noise in the vector $\bb$ are shown.
\begin{figure}[tbp]
\centering
\begin{tabular}{cc}
\hspace{-0.7cm}\textbf{{\small {(a)}}} & \hspace{-0.7cm}\textbf{{\small {(b)}}} \\
\hspace{-0.7cm}\includegraphics[width=6.2cm]{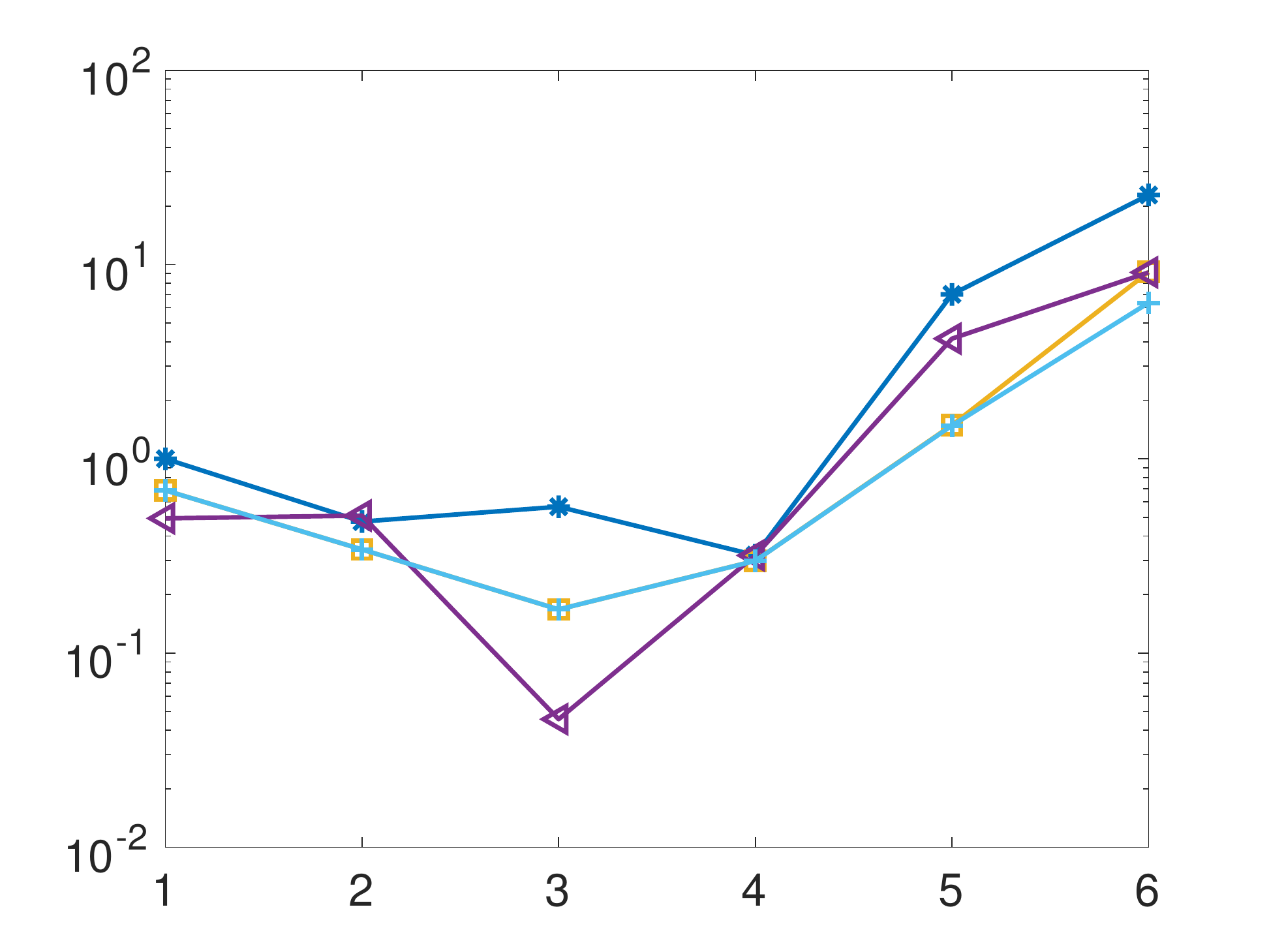} &
\hspace{-0.7cm}\includegraphics[width=6.2cm]{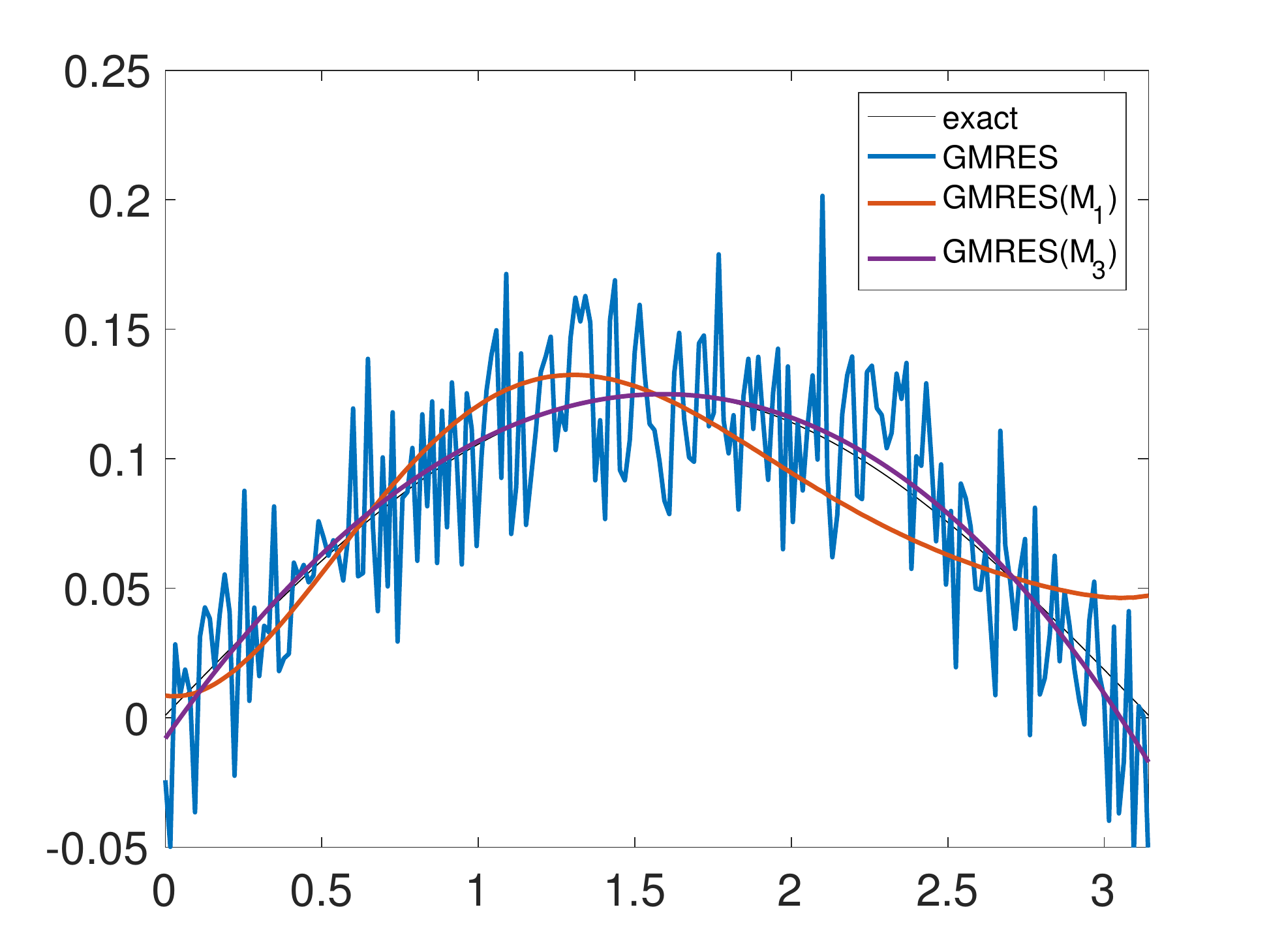}\\
\hspace{-0.7cm}\textbf{{\small {(c)}}} & \hspace{-0.7cm}\textbf{{\small {(d)}}} \\
\hspace{-0.7cm}\includegraphics[width=6.2cm]{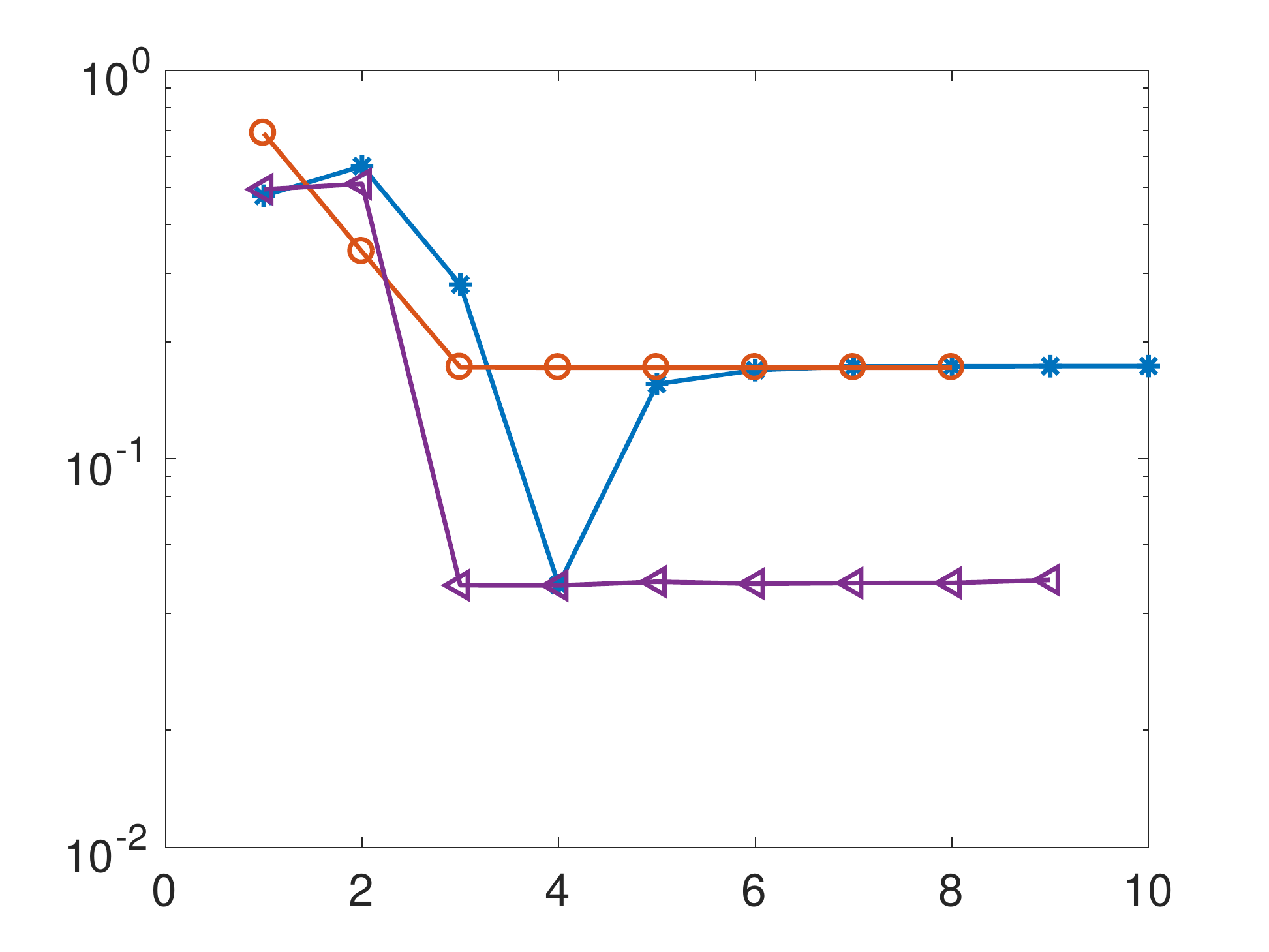} &
\hspace{-0.7cm}\includegraphics[width=6.2cm]{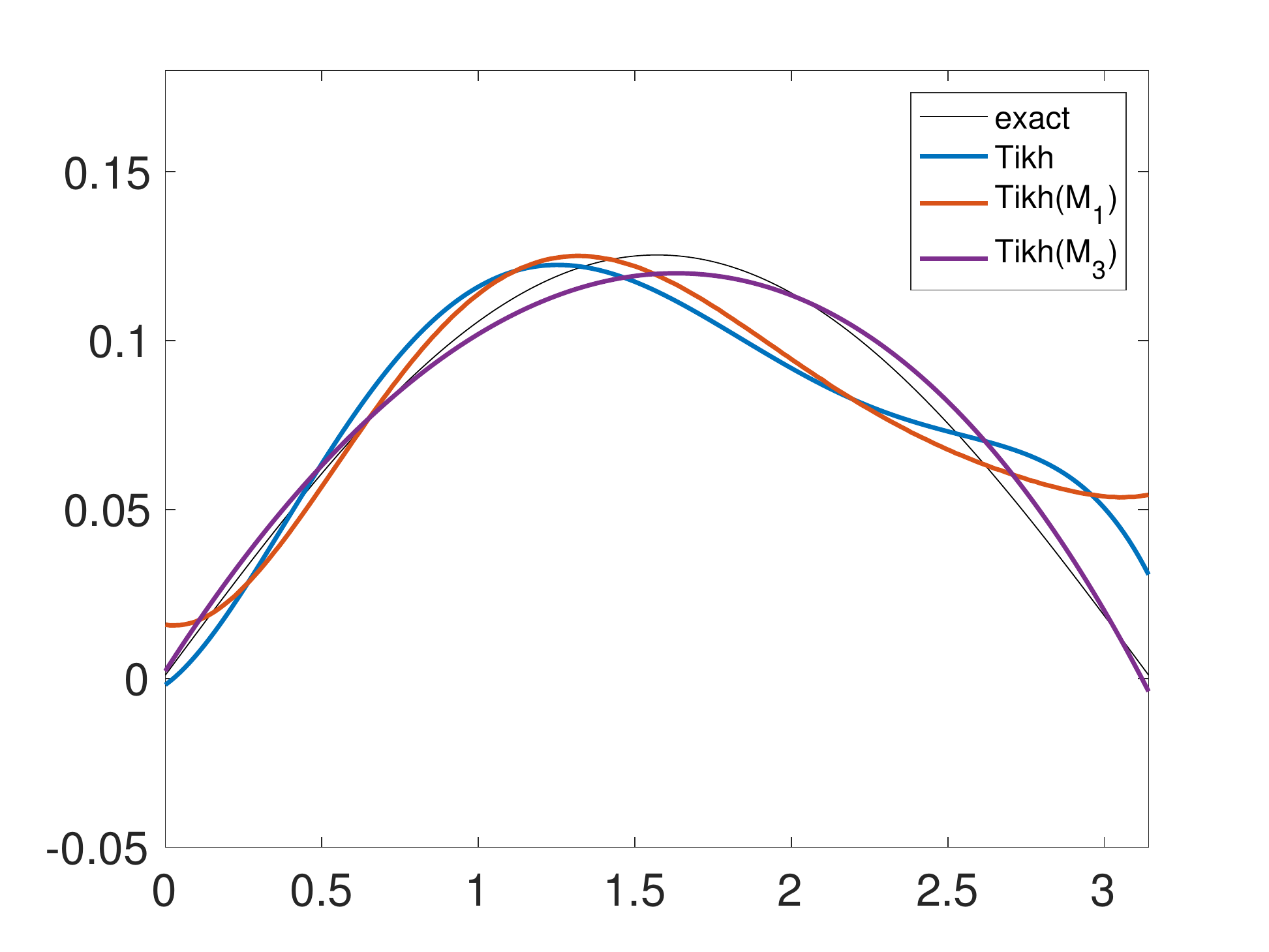}\\
\hspace{-0.7cm}\textbf{{\small {(e)}}} & \hspace{-0.7cm}\textbf{{\small {(f)}}} \\
\hspace{-0.7cm}\includegraphics[width=6.2cm]{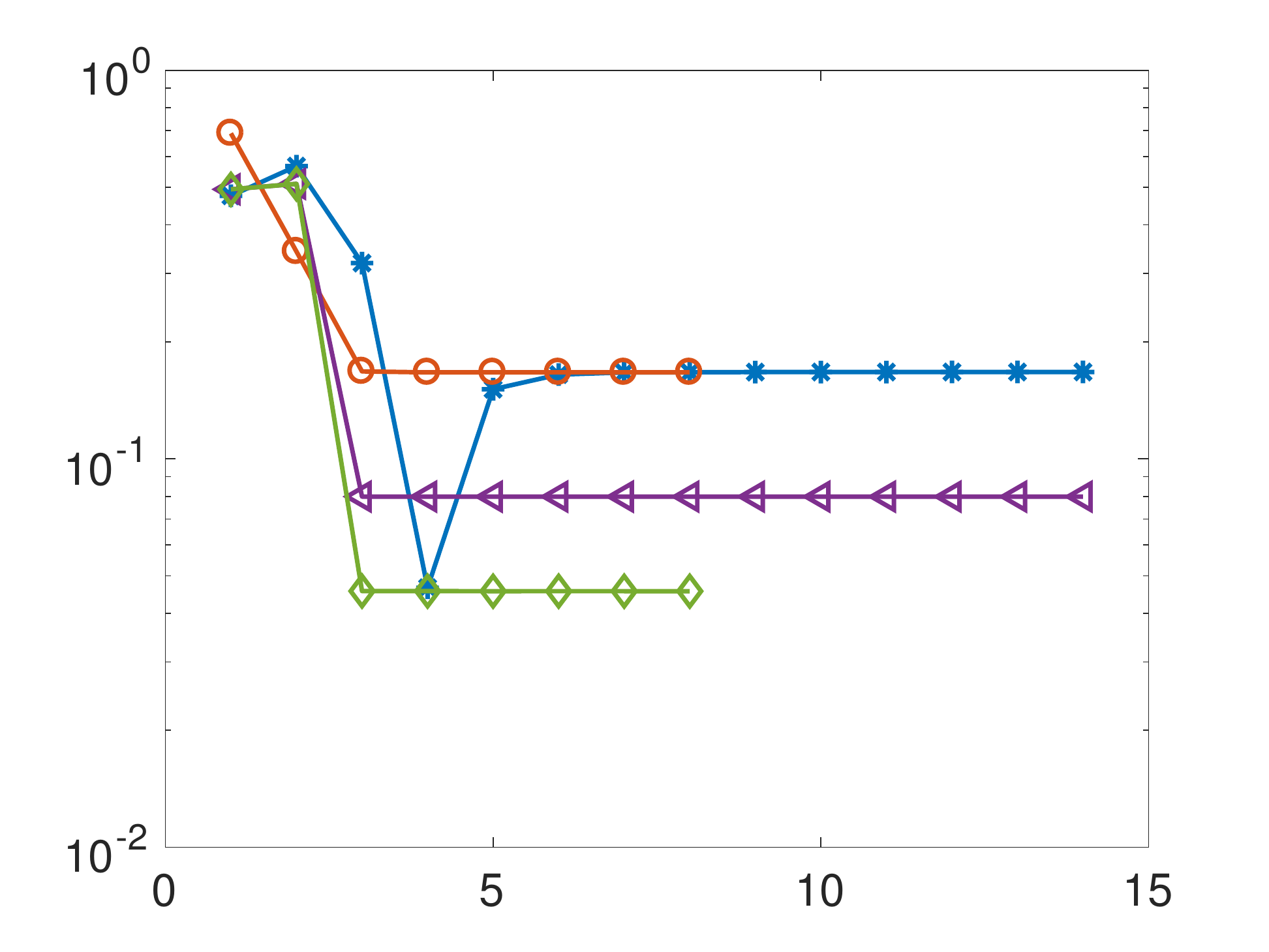} &
\hspace{-0.7cm}\includegraphics[width=6.2cm]{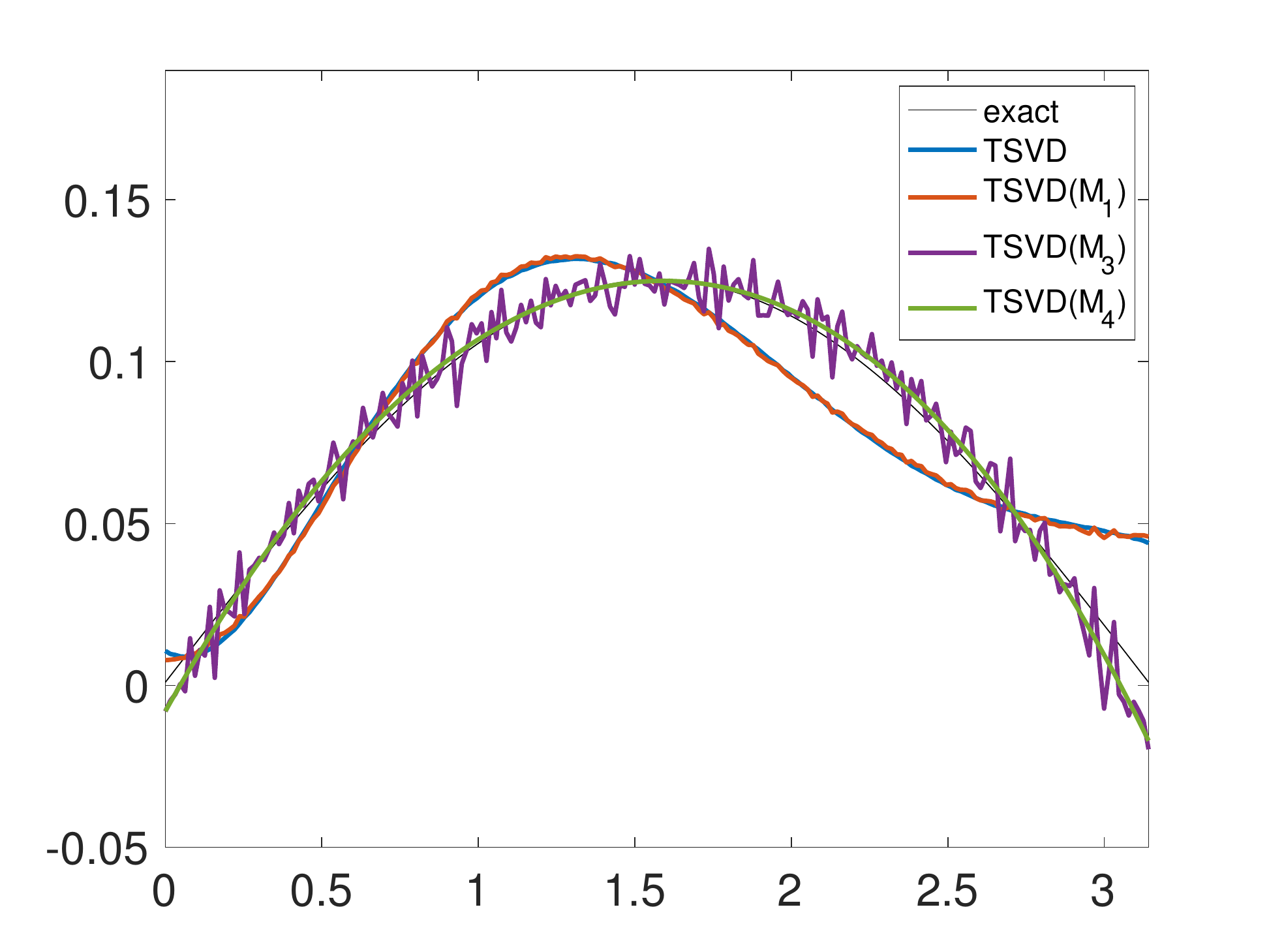}
\end{tabular}%
\caption{Test problem \texttt{baart}, with $m=200$ and $\|\be\|/\|\bb\| = 10^{-2}$. 
\textbf{(a)} Relative error history, without any additional regularization and $\kP = 9$. \textbf{(b)} Best
approximations, without any additional regularization and $\kP = 9$. \textbf{(c)} Relative error history, with 
Arnoldi--Tikhonov and $\kP = 9$. \textbf{(d)} Approximations for $k=6$, with Arnoldi--Tikhonov and $\kP = 9$. 
\textbf{(e)} Relative error history, with Arnoldi-TSVD and $\kP = 39$. \textbf{(f)} Approximations for $k=9$, 
with Arnoldi-TSVD and $\kP = 39$. }
\label{fig:baart}
\end{figure}

\texttt{\textbf{heat}}. We consider a discretization of the inverse heat equation 
formulated as a Volterra integral equation of the first kind; this problem can be 
regarded as numerically rank-deficient, with numerical rank equal to 195. According to the
analysis in \cite{JH}, GMRES does not converge to the minimum norm solution of 
(\ref{linsys}) for this problem, as the null spaces of $A$ and $A^T$ are different. For 
this test problem, considering the preconditioned methods described in Section \ref{sec5}, 
with some of the preconditioners derived in Section \ref{sec4}, can make a dramatic difference. 
When applying stopping rule (\ref{stop2}) to set the number of Arnoldi iterations defining the 
preconditioners, we get $\kP=23$; also for this problem, stopping rule (\ref{stop1}) prescribes 
a similar $\kP$. We report the behavior of relevant quantities used to set $\kP$ in the bottom 
frames of Figure \ref{fig:sc}. In the left frames of Figure \ref{fig:heat} we report the 
relative error history when different preconditioners (also defined with respect to different 
parameters $\kP$) and different solvers are considered. In all these graphs, the unpreconditioned 
Arnoldi--Tikhonov and Arnoldi-TSVD solutions diverge, with the best approximations being the ones 
recovered during the first iteration, i.e., the ones belonging to $\spn\{\bb\}$. The approximate 
solutions computed using the preconditioners (\ref{prec1})-(\ref{prec4}) with $\kP=23$ do not look 
much improved; indeed, while the ones obtained with $M_1$ and $M_2$ do not degenerate as quickly,
the ones obtained with $M_3$ and $M_4$ are even worse than the unpreconditioned ones. However, if
the maximum allowed value of $\kP$ (i.e., $\kP=60$) is chosen, the gain of using a preconditioned 
approach is evident. While the regularizing preconditioners $M_3$ and $M_4$ still perform very 
poorly, the preconditioners $M_1$ and $M_2$, which seek to make the matrix $AM$ Hermitian positive semidefinite 
by incorporating an approximate regularized version of $A^T$, allow us to reconstruct a solution, 
whose quality is close to the one achieved with LSQR. The right frames of Figure \ref{fig:heat} display 
the history of the corresponding relative residuals (or discrepancies). We can clearly see that the 
residuals are good indicators of the performance of these methods. Indeed, for $\kP=23$ all the residuals 
(except for the LSQR one) have quite large norm and, in particular, the discrepancy principle (\ref{discr}), 
(\ref{discr2}), (\ref{discr3}) is far from being satisfied. For $\kP = 60$, the preconditioned Arnoldi--Tikhonov 
method with the preconditioners $M_1$ or $M_2$ eventually satisfies the discrepancy principle. Also, the 
approximate solution obtained with $M_1$ reproduces the main features of of the exact solution, though some
spurious oscillations are present. This is probably due to the tiny value $\mu=1.2287\cdot 10^{-8}$ selected 
for the regularization parameter according to the discrepancy principle (\ref{discr2}); spurious oscillations 
are likely to be removed if a larger value for $\mu$ is used. The smallest relative errors attained by each 
iterative method (considering different choices of solvers and preconditioners) are reported in 
Table \ref{tab:expo}, where averages over 30 different realizations of the noise in the vector $\bb$ are 
shown.    

\begin{figure}[tbp]
\centering
\begin{tabular}{cc}
\hspace{-0.7cm}\textbf{{\small {(a)}}} & \hspace{-0.7cm}\textbf{{\small {(b)}}} \\
\hspace{-0.7cm}\includegraphics[width=6.2cm]{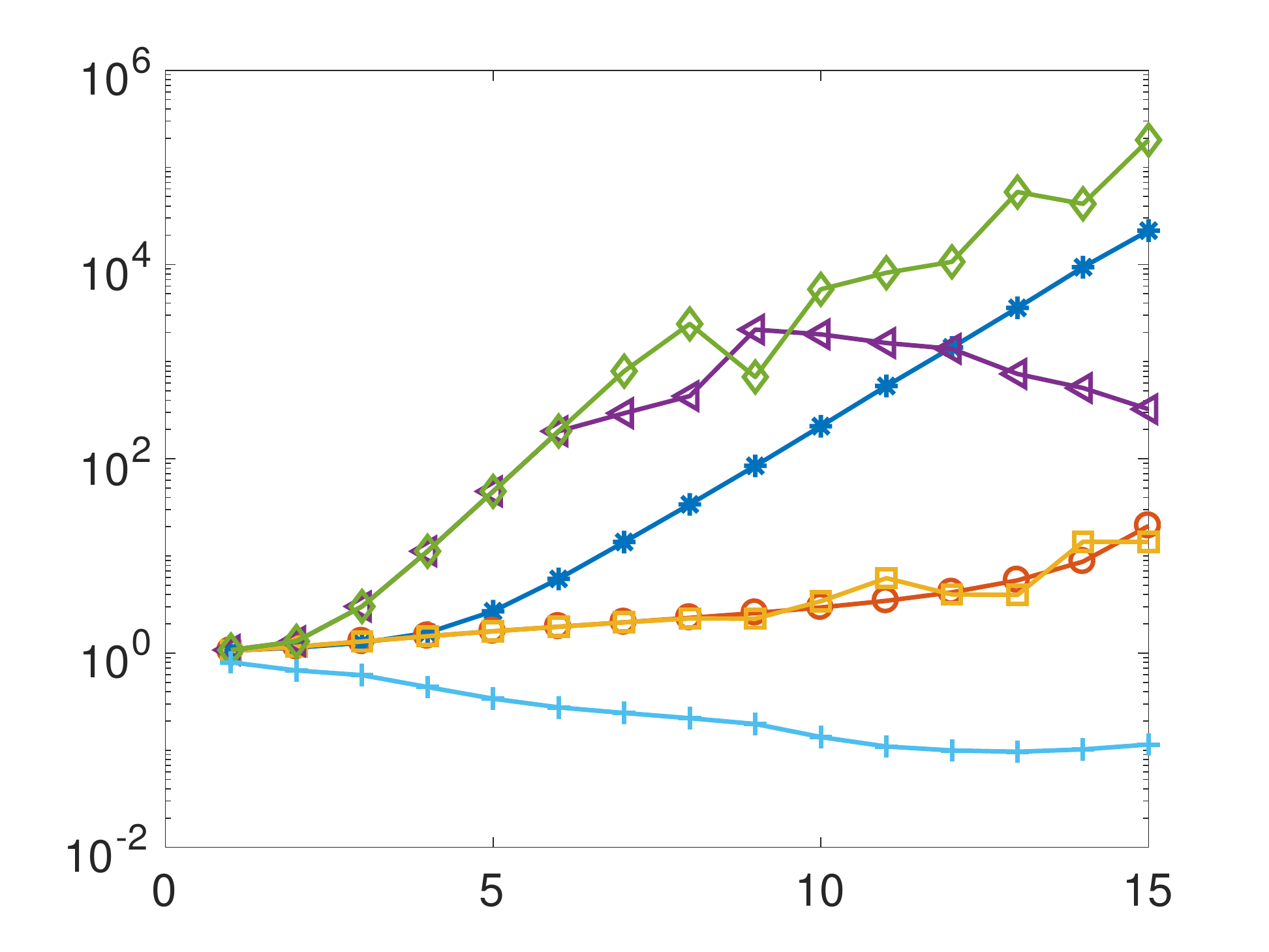} &
\hspace{-0.7cm}\includegraphics[width=6.2cm]{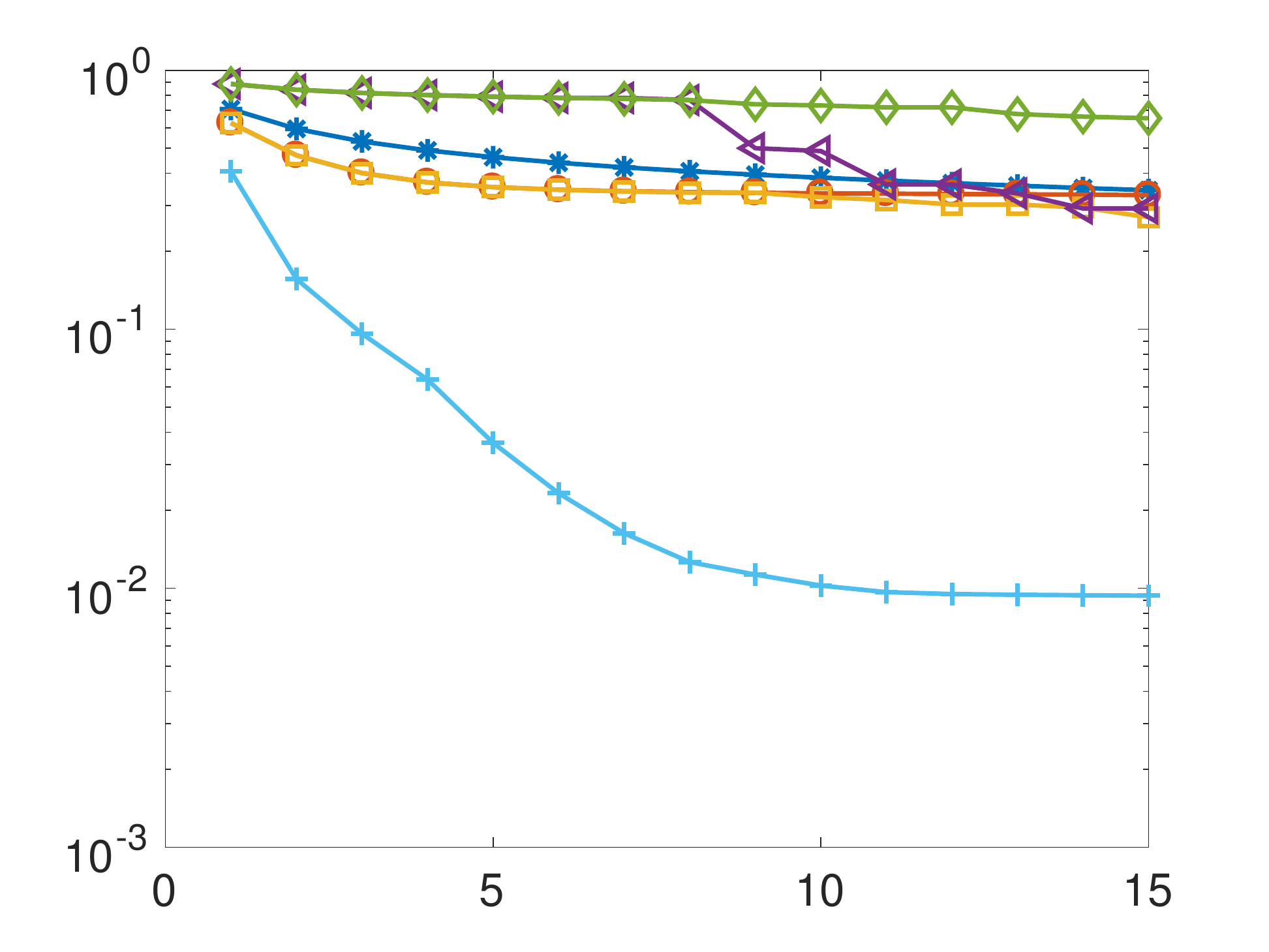}\\
\hspace{-0.7cm}\textbf{{\small {(c)}}} & \hspace{-0.7cm}\textbf{{\small {(d)}}} \\
\hspace{-0.7cm}\includegraphics[width=6.2cm]{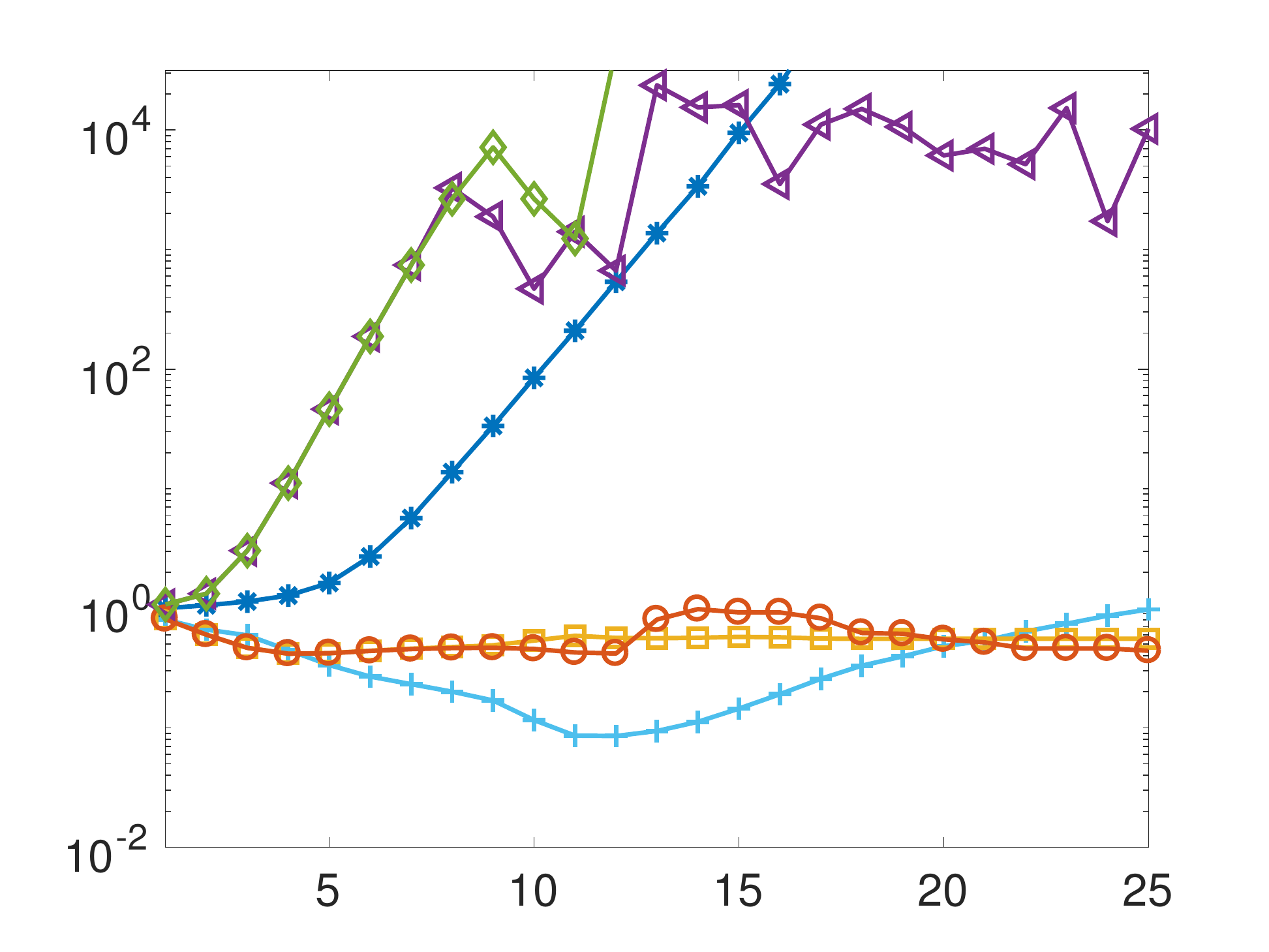} &
\hspace{-0.7cm}\includegraphics[width=6.2cm]{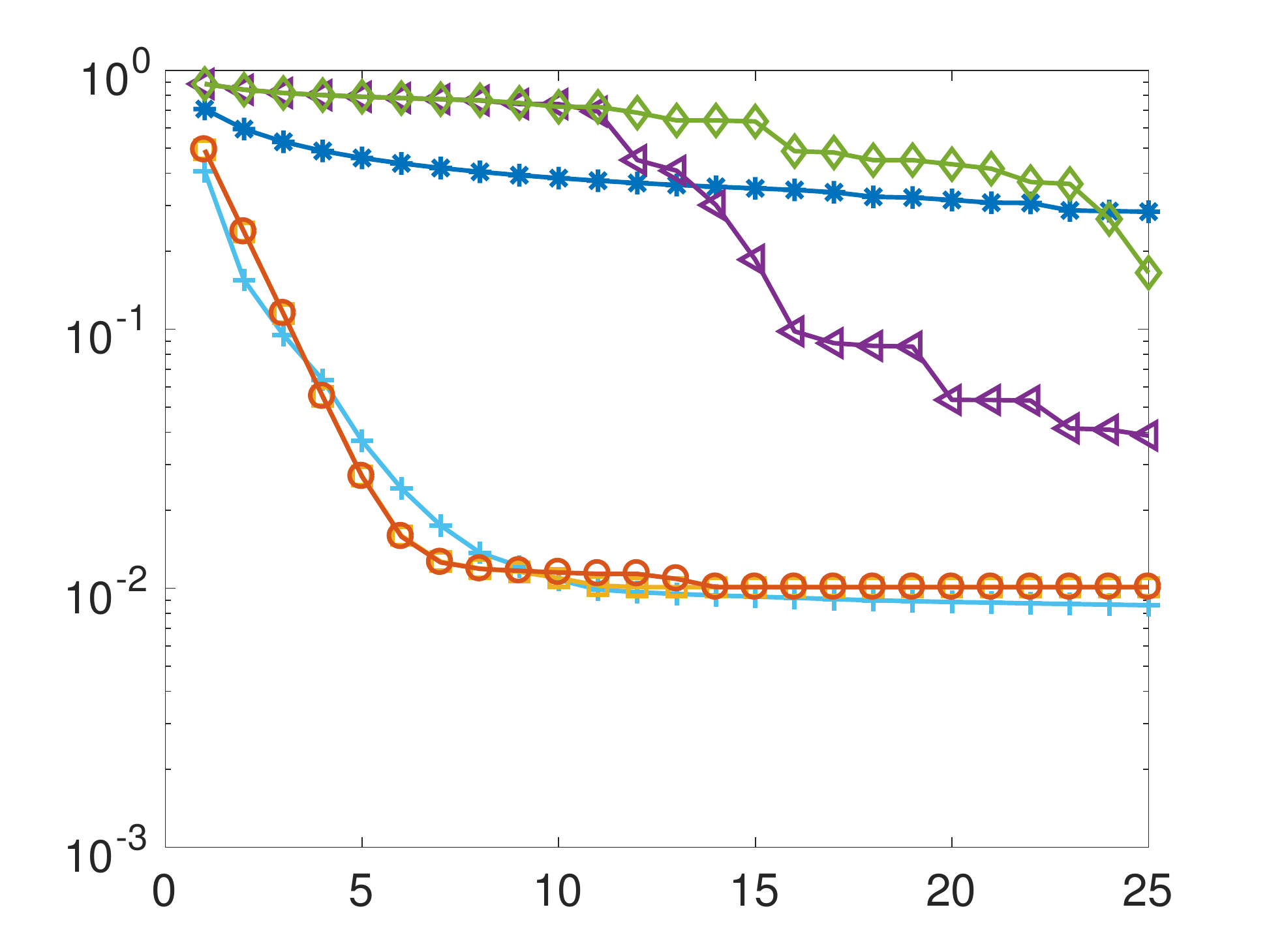}\\
\multicolumn{2}{c}{\textbf{{\small {(e)}}}}\\
\multicolumn{2}{c}{\includegraphics[width=8.2cm]{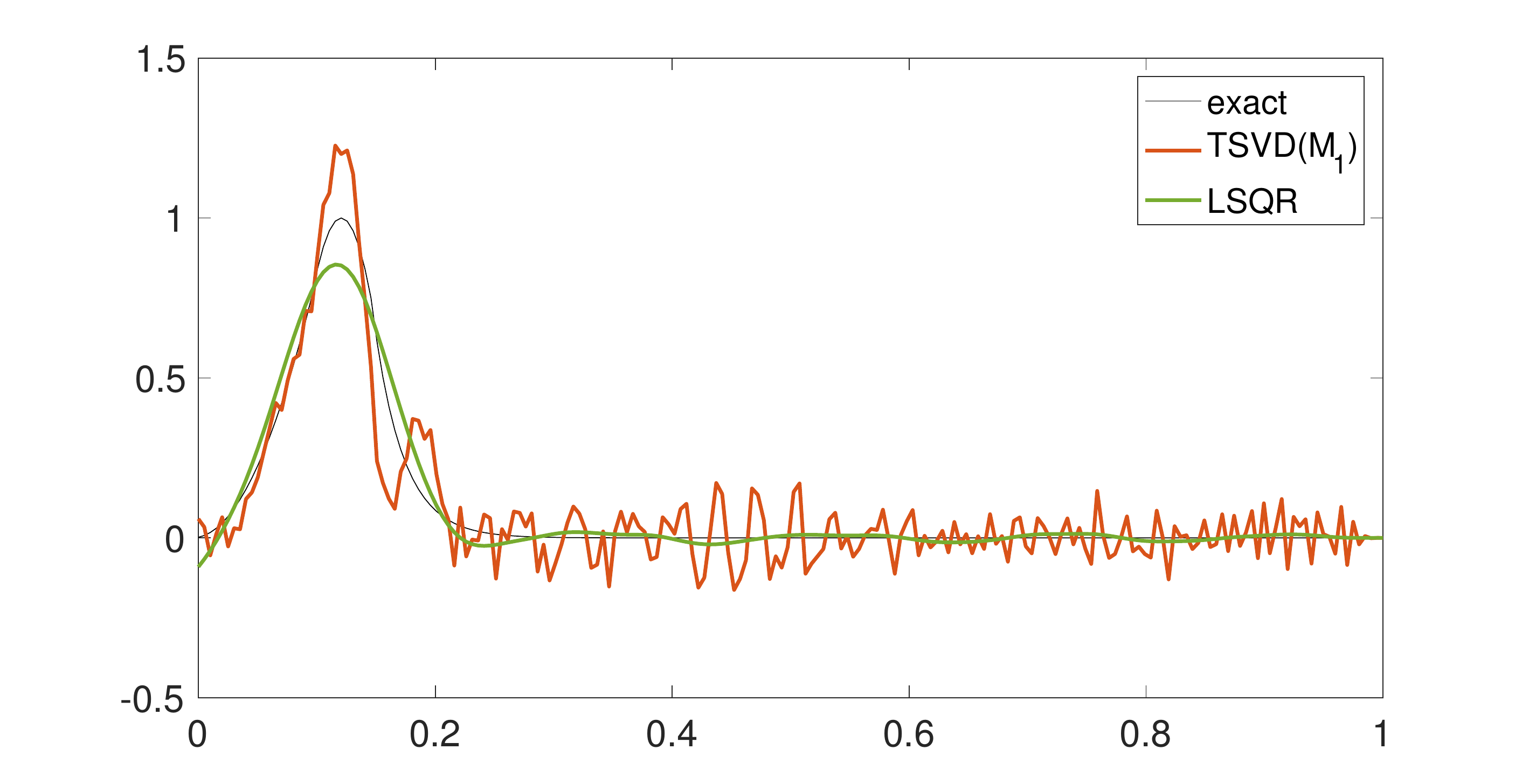}}
\end{tabular}%
\caption{Test problem \texttt{heat}, with $m=200$ and $\|\be\|/\|\bb\| = 10^{-2}$. 
\textbf{(a)} Relative error history, with Arnoldi-TSVD and $\kP = 23$. \textbf{(b)} Relative residual history, with Arnoldi-TSVD and $\kP = 23$. \textbf{(c)} Relative error history, with Arnoldi--Tikhonov and $\kP = 60$. \textbf{(d)} Relative residual history, with Arnoldi--Tikhonov and $\kP = 60$. \textbf{(e)} Best approximations, with Arnoldi--Tikhonov and $\kP = 60$.}
\label{fig:heat}
\end{figure}

\begin{figure}[tbp]
\centering
\begin{tabular}{cc}
\hspace{-0.7 cm}\textbf{{\small {\texttt{baart}, $h_{i+1,i}$}}} & \hspace{-0.7cm}\textbf{{\small {\texttt{baart}, $p_{\sigma}^{(i)}$}}} \\
\hspace{-0.7cm}\includegraphics[width=6.2cm]{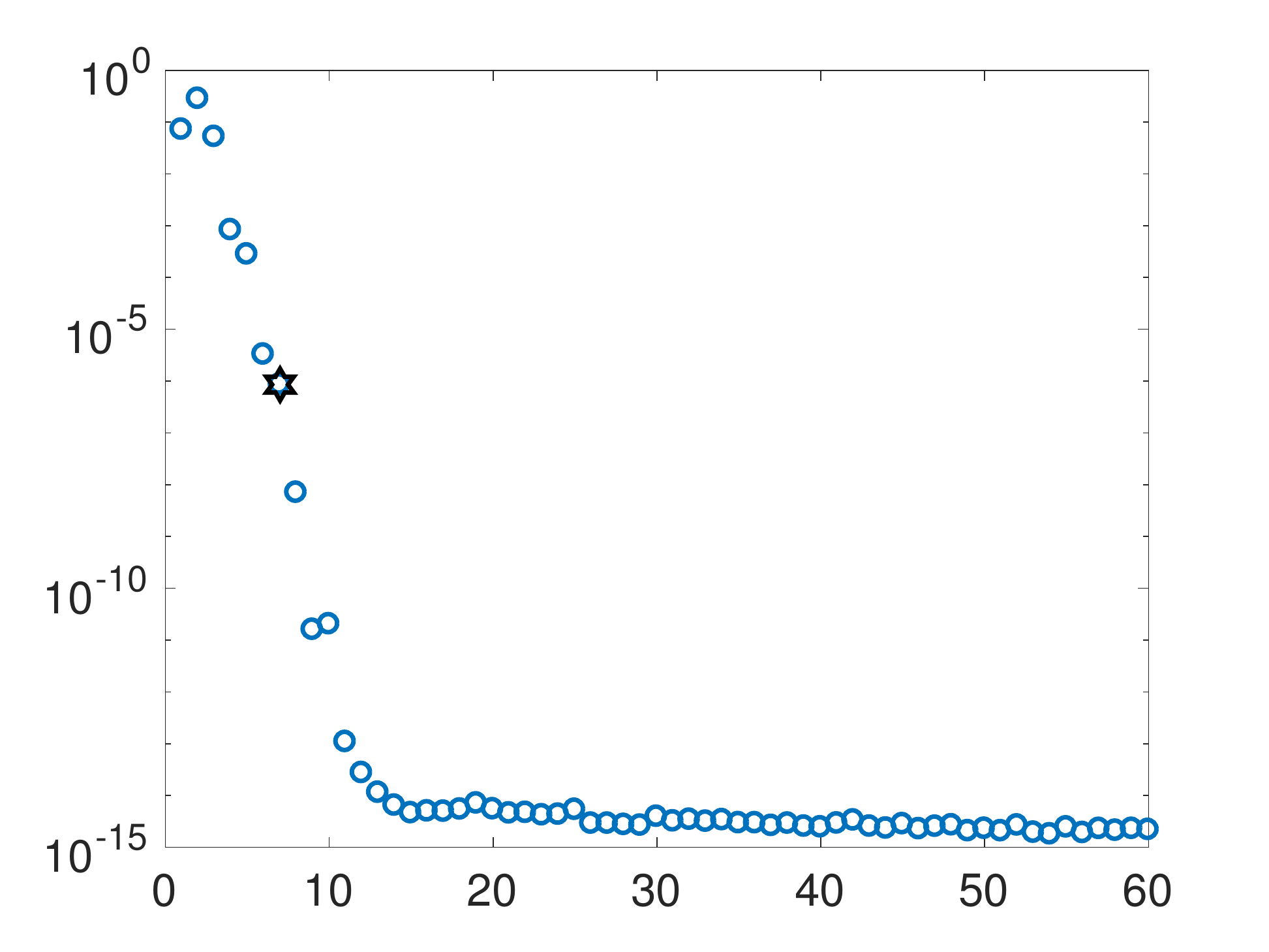} &
\hspace{-0.7cm}\includegraphics[width=6.2cm]{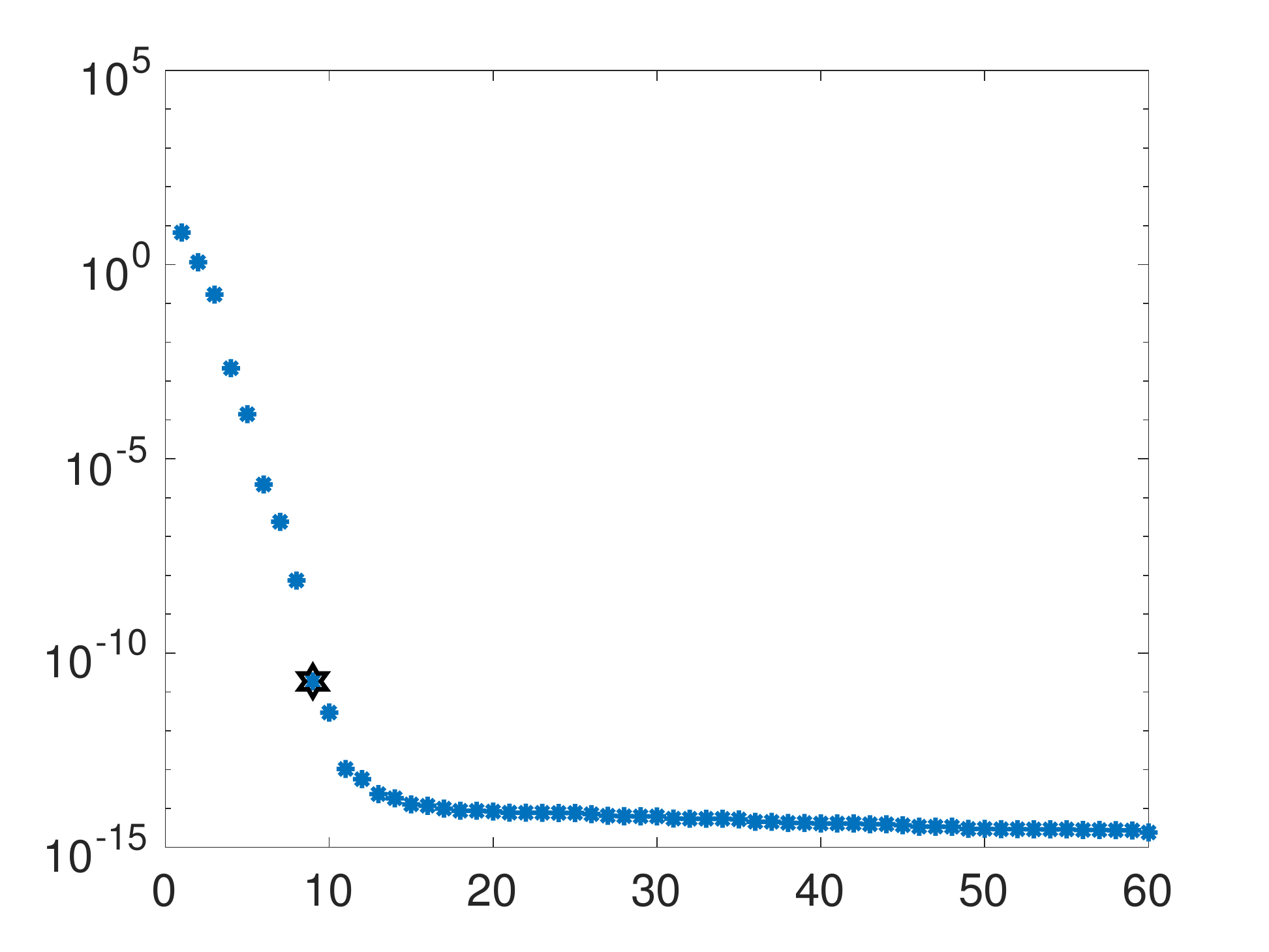} \\
\hspace{-0.7cm}\textbf{{\small {\texttt{heat}, $h_{i+1,i}$}}} & \hspace{-0.7cm}\textbf{{\small {\texttt{heat}, $p_{\sigma}^{(i)}$}}} \\
\hspace{-0.7cm}\includegraphics[width=6.2cm]{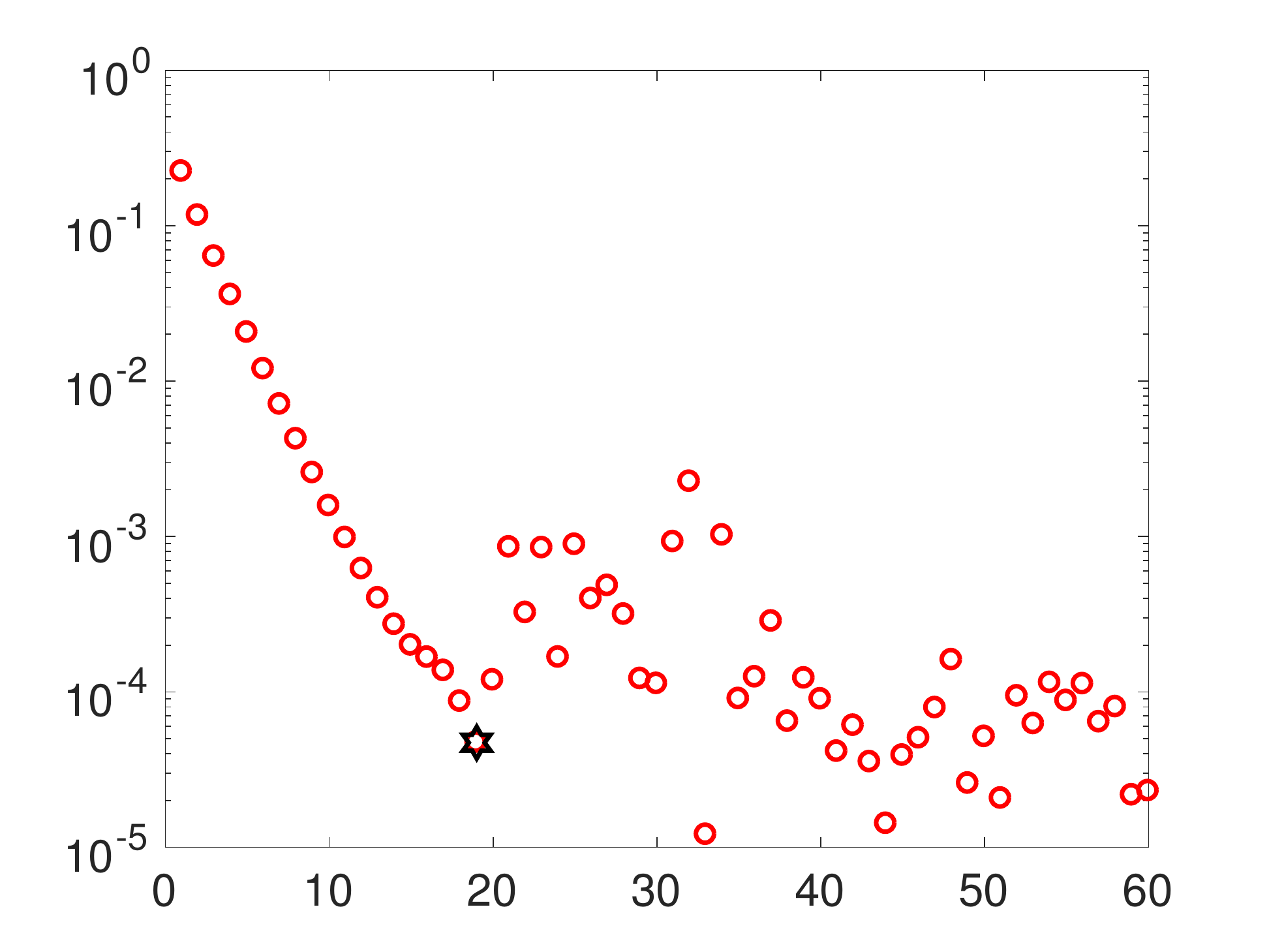} &
\hspace{-0.7cm}\includegraphics[width=6.2cm]{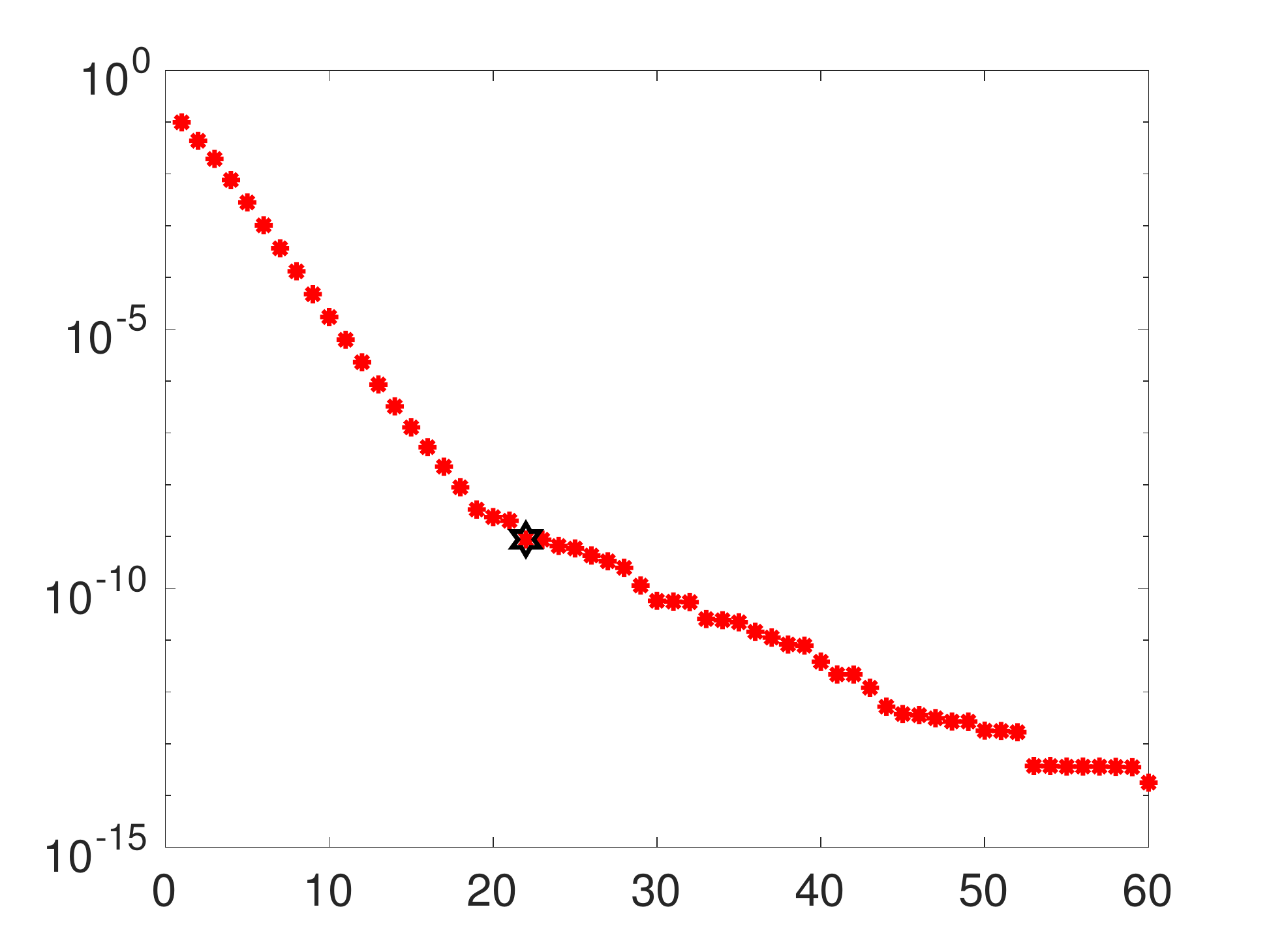}
\end{tabular}%
\caption{Illustration of the stopping criteria (\ref{stop1}) and (\ref{stop2}) for the test problems \texttt{baart} (top row) and \texttt{heat} (bottom row). In the left column, the values of the subdiagonal entries $h_{i+1,i}$ of the Hessenberg matrix $H_{k+1,k}$ are plotted against $i$ ($i=1,\dots,k$). In the right column, the products $p_{\sigma}^{(i)}:=\sigma_1^{(i)}\sigma_{i+1}^{(i+1)}$ of the extremal singular values of $H_{i+1,i}$ are plotted against $i$.}
\label{fig:sc}
\end{figure}

\begin{center}
\begin{table}
\caption{Average values of the best relative errors over 30 runs of the test problems in the first set of experiments, with $\|\be\|/\|\bb\|=10^{-2}$. The smaller parameter $\kP$ satisfies, on average, the stopping rule (\ref{stop2}); the larger parameter $\kP$ is obtained adding 30 to the smaller parameter $\kP$.}
\footnotesize
\label{tab:expo}   
\begin{center}
\begin{tabular}{|c|c|c|c|c|c|c|}\hline
&\multicolumn{6}{c|}{\texttt{baart}}\\\hline
&\multicolumn{2}{c|}{TSVD} &\multicolumn{2}{c|}{Tikh} &\multicolumn{2}{c|}{none} \\\hline
& $\kP=9$ & $\kP=39$ & $\kP=9$ & $\kP=39$& $\kP=9$ & $\kP=39$\\\hline
 -- & 4.7202e-02 & 4.7202e-02 & 6.7530e-02 & 6.7530e-02 & 3.0950e-01 & 3.0950e-01\\\hline
$M_1$ &   2.2148e-02  &1.6744e-01 & 2.4002e-02 & 1.7926e-01 & 1.8452e-02 & 1.5647e-01\\\hline
$M_2$ & 1.6689e-01 & 1.2429e-01 & 1.7733e-01 & 1.3091e-01 & 1.5838e-01 & 1.2517e-01\\\hline
$M_3$ &  4.5578e-02 & 6.1255e-02 & 6.6982e-02 & 6.7486e-02 & 4.5029e-02 & 6.1259e-02\\\hline
$M_4$ & 1.7025e-02 & 4.5678e-02 & 2.4297e-02 & 6.8386e-02 & 1.7027e-02 & 4.1604e-02\\\hline
LSQR & 1.5787e-01 & 1.5787e-01 & 1.5787e-01 & 1.5787e-01 & 1.5787e-01 & 1.5787e-01\\\hline
&\multicolumn{6}{c|}{\texttt{heat}}\\\hline
&\multicolumn{2}{c|}{TSVD} &\multicolumn{2}{c|}{Tikh} &\multicolumn{2}{c|}{none} \\\hline
& $\kP=20$ & $\kP=50$ & $\kP=20$ & $\kP=50$& $\kP=20$ & $\kP=50$\\\hline
-- & 6.5870e-01 & 6.5870e-01 & 5.6767e-01 & 5.6767e-01 & 1.0584e+00 & 1.0584e+00\\\hline
$M_1$ & 1.0296e+00 & 3.6071e-01 & 1.0296e+00 & 3.6173e-01 & 1.0296e+00 & 3.6136e-01\\\hline
$M_2$ & 1.0296e+00 & 3.6390e-01 & 1.0119e+00 & 3.0444e-01 & 1.0296e+00 & 3.6390e-01\\\hline
$M_3$ & 1.0747e+00 & 1.0747e+00 & 1.0747e+00 & 1.0747e+00 & 1.0747e+00 & 1.0747e+00\\\hline
$M_4$ & 1.0747e+00 & 1.0747e+00 & 1.0747e+00 & 1.0375e+00 & 1.0747e+00 & 1.0747e+00\\\hline
LSQR & 9.2105e-02 & 9.2105e-02 & 9.2105e-02 & 9.2105e-02 & 9.2105e-02 & 9.2105e-02\\\hline
\end{tabular}
\end{center}
\end{table}  
\end{center} 

\paragraph{Second set of experiments}
We consider 2D image restoration problems, where the available images are affected by a 
spatially invariant blur and Gaussian white noise. In this setting, given a point-spread 
function (PSF) that describes how a single pixel is deformed, a blurring process is modeled 
as a 2D convolution of the PSF and an exact discrete finite image 
$X_{\rm exact}\in\R^{n\times n}$. Here and in the following, a PSF is represented as a 2D 
image.
A 2D image restoration problem can be expressed as a linear system of equations (\ref{linsys}), 
where the 1D array $\bb$ is obtained by stacking the columns of the 2D blurred and noisy image 
(so that $m=n^2$), and the square matrix $A$ incorporates the convolution process together with 
some given boundary conditions.
Our experiments consider two different gray scale test images, 
two different PSFs, and reflective boundary conditions; the sharp images are artificially 
blurred, and noise of several levels is added. Matrix-vector products are computed 
efficiently by using the routines in \emph{Restore Tools} \cite{RestTools}. The maximum 
allowed number of Arnoldi iterations in Algorithm \ref{alg:arnoldi} is $k=100$, and $\kP$ 
is set according to (\ref{stop2}). 

\texttt{Anisotropic motion blur}. For this experiment, a geometric test image of size $64\times 64$ pixels is taken as sharp image. It is displayed, together with a PSF modeling motion in two orthogonal directions, and the available corrupted data (with noise level $\|\be\|/\|\bb\|=2\cdot10^{-2}$), in the top frames of Figure \ref{fig:geo}. GMRES and right-preconditioned GMRES with preconditioners $C_1$, $C_2$, and $C_3$ are considered. The preconditioners $M_i$, $i=1,\dots,4$, do not perform well in this case, even when the maximum number of Arnoldi steps $\kP=100=k$ is performed; this is probably due to the fact that the PSF is quite unsymmetric. Figure \ref{fig:geo_relerr} displays the history of the relative reconstruction errors for these solvers. The most effective preconditioner for this problem is $C_2$. Moreover, both $C_1$ and $C_2$ require only a few iterations to compute an accurate solution and exhibit a quite stable behavior afterwards: for this reason we do not consider the Arnoldi--Tikhonov and Arnoldi-TSVD methods for this test problem. 
\begin{figure}[tbp]
\centering
\includegraphics[width=9.0cm]{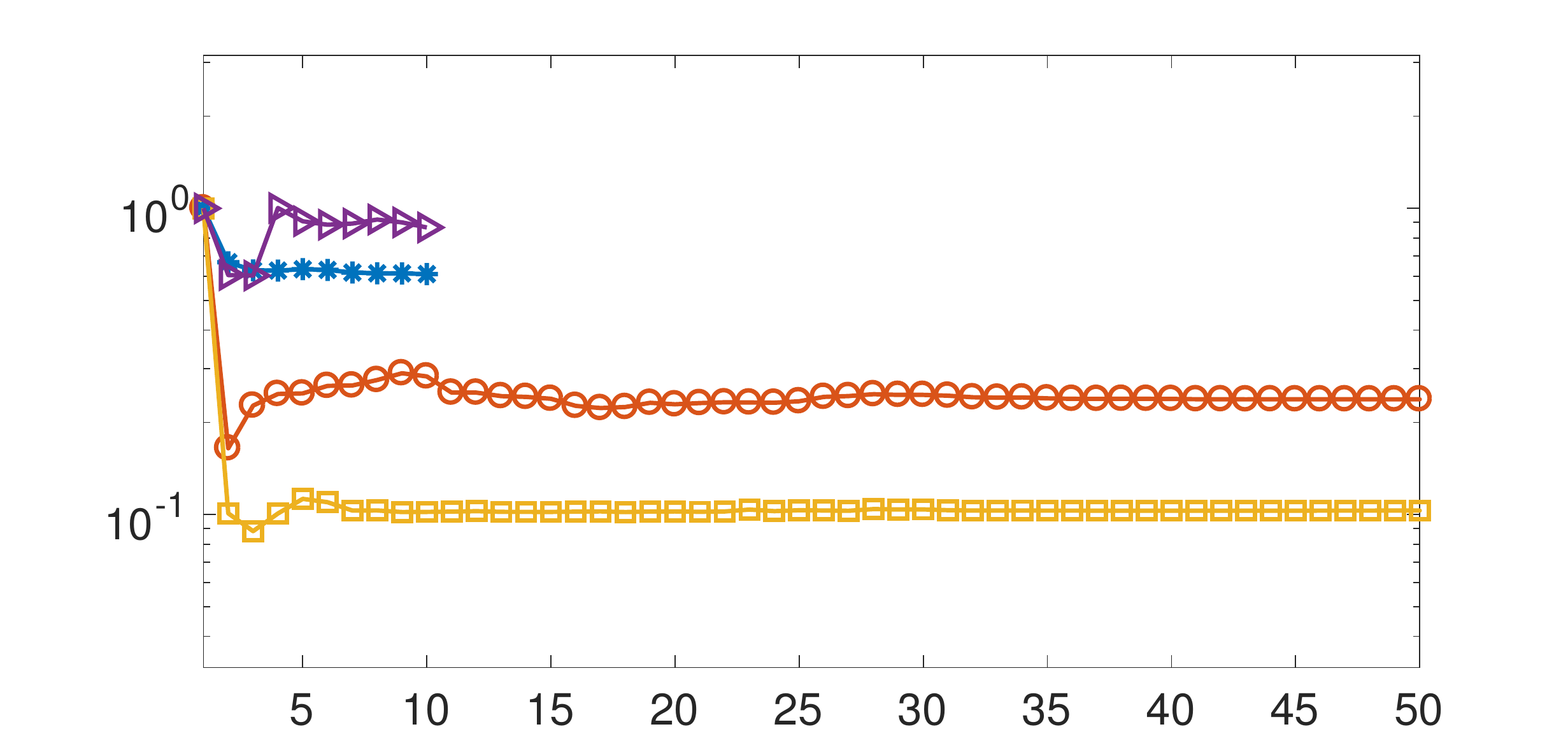}
\caption{Image deblurring problem with anisotropic motion blur. Relative error history of GMRES and right-preconditioned GMRES methods (with preconditioners $C_1$, $C_2$, and $C_3$). The GMRES and GMRES($C_3$) curves are truncated because severe ``semi-convergence'' occurs.}
\label{fig:geo_relerr}
\end{figure}
Figure \ref{fig:geo} shows the best restorations achieved by each method; relative 
errors and the corresponding number of iterations are displayed in the caption.
\begin{figure}[tbp]
\centering
\begin{tabular}{ccc}
\hspace{-1.2cm}\textbf{{\small {exact}}} &
\hspace{-1.2cm}\textbf{{\small {PSF}}} &
\hspace{-1.2cm}\textbf{{\small {corrupted}}} \\
\hspace{-1.2cm}\includegraphics[width=5.0cm]{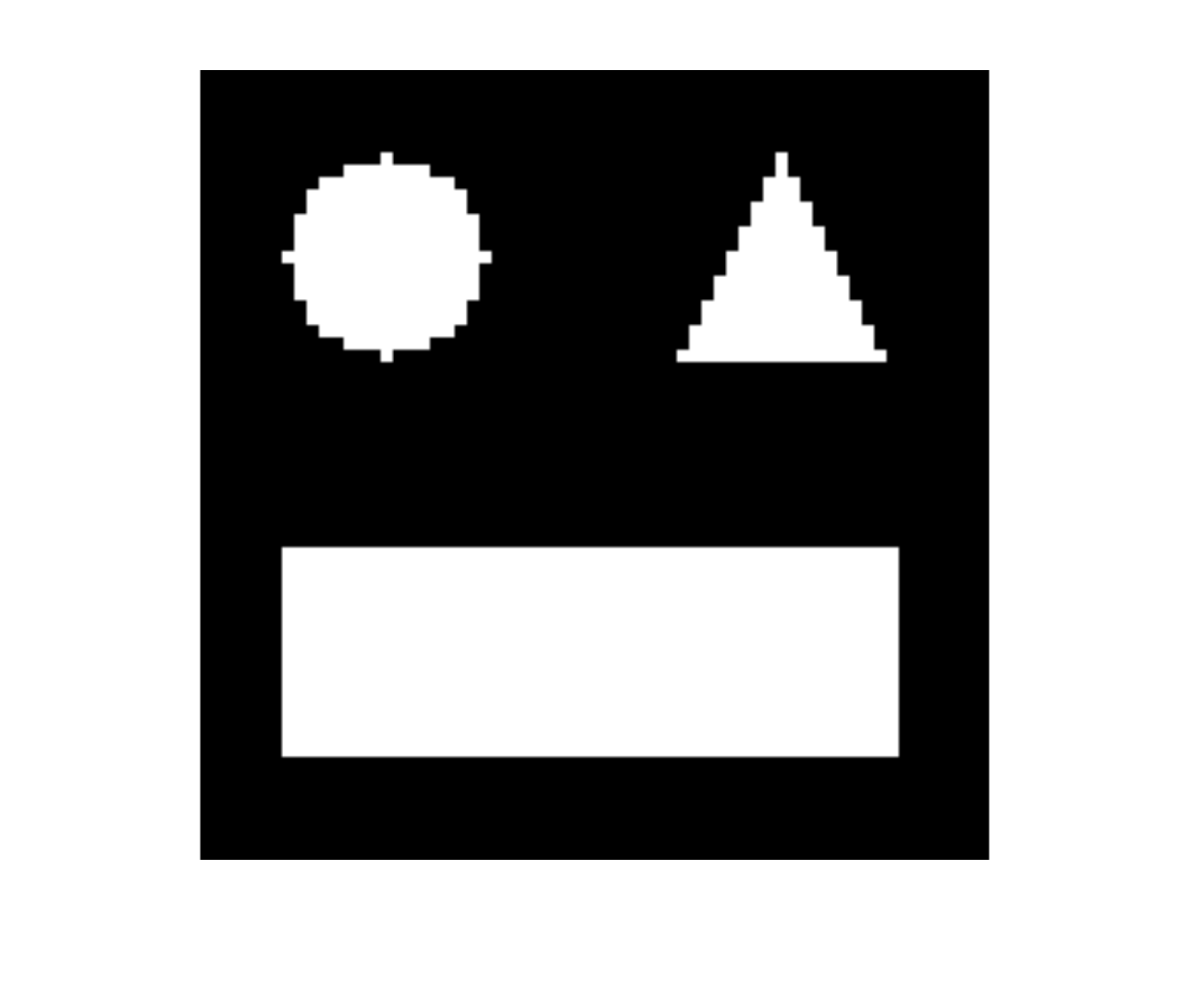} &
\hspace{-1.2cm}\includegraphics[width=5.0cm]{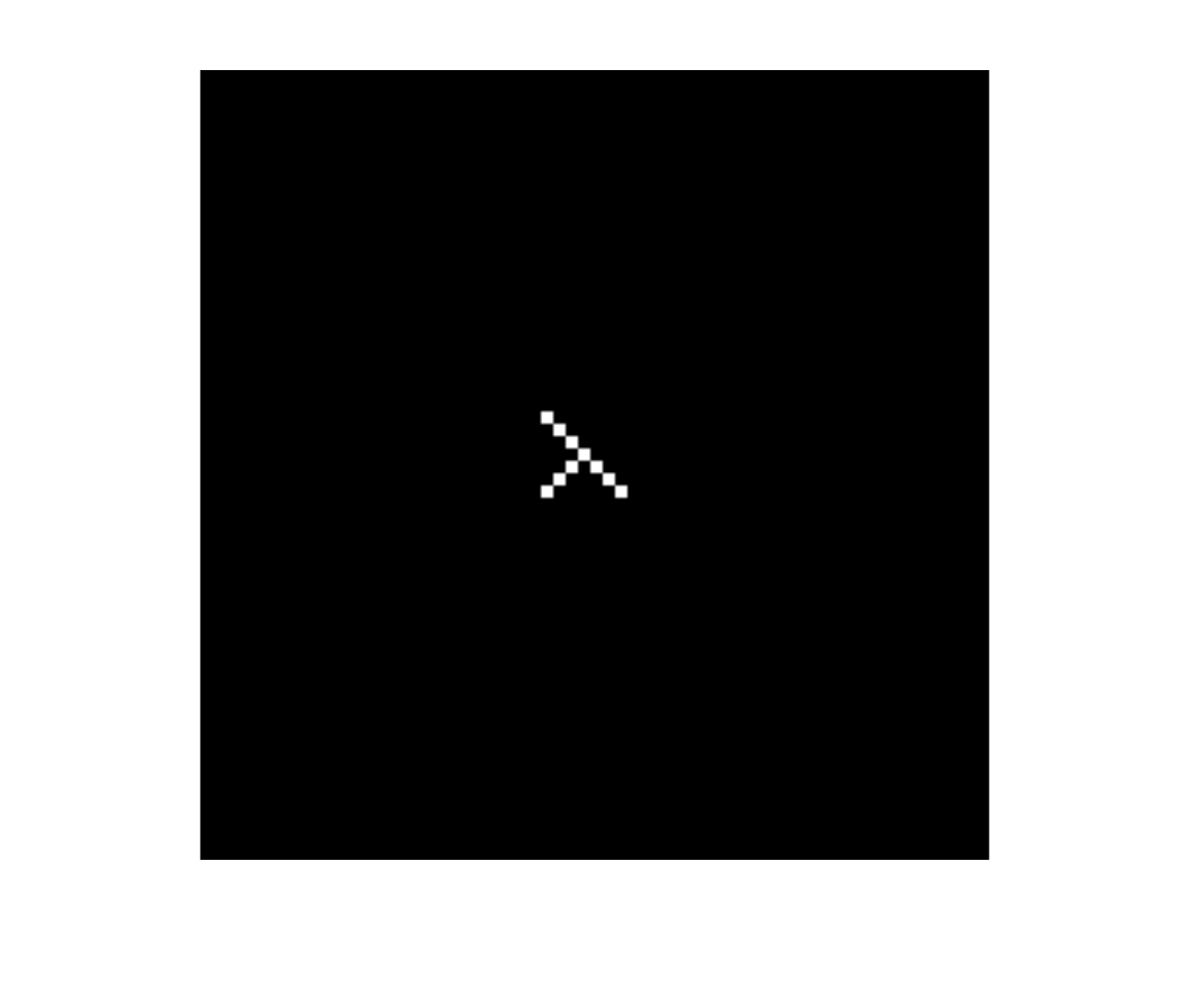} &
\hspace{-1.2cm}\includegraphics[width=5.0cm]{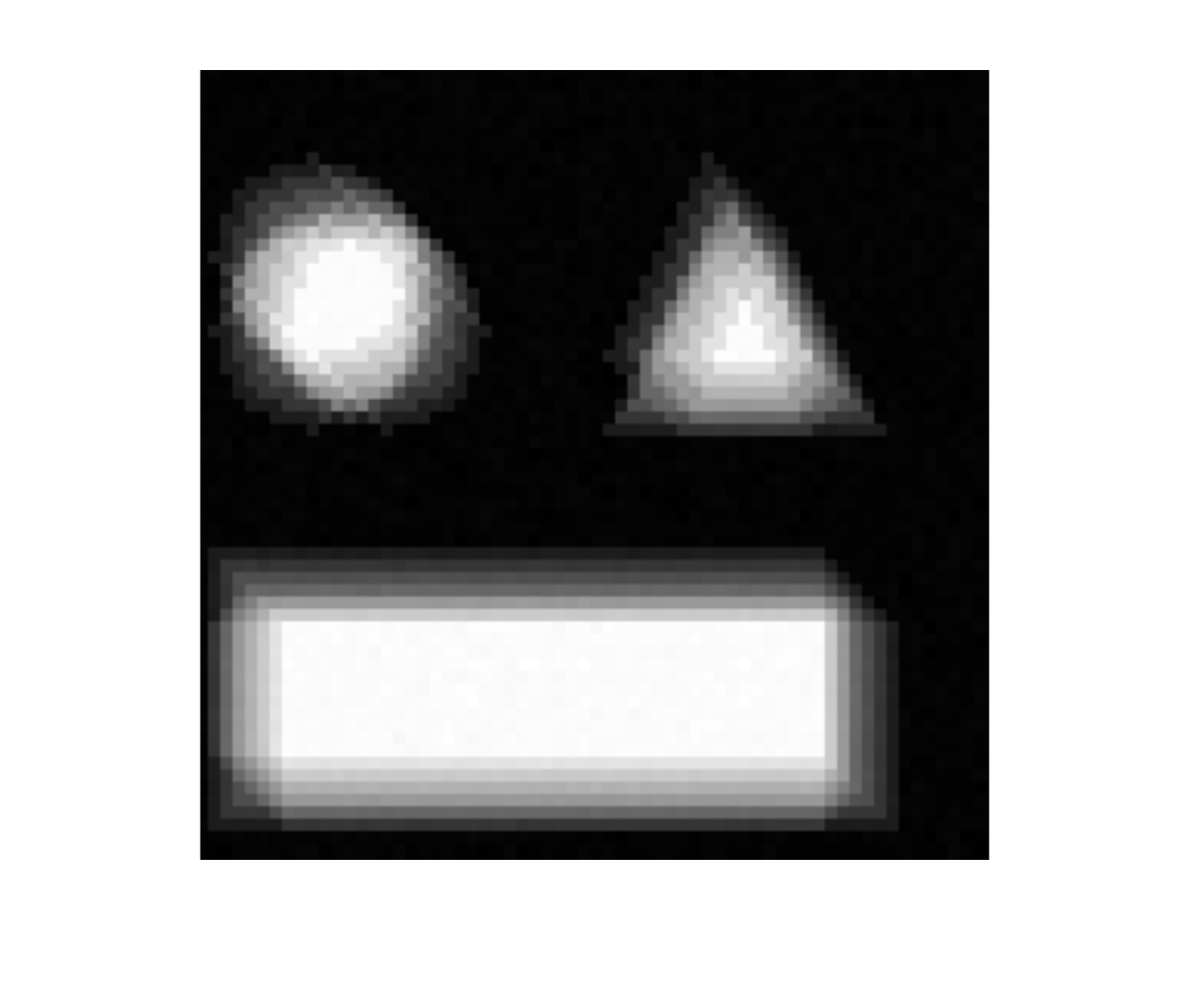}\vspace{-0.2cm}\\
\hspace{-1.2cm}\textbf{{\small {GMRES}}} &
\hspace{-1.2cm}\textbf{{\small {GMRES($C_1$)}}} &
\hspace{-1.2cm}\textbf{{\small {GMRES($C_2$)}}} \\
\hspace{-1.2cm}\includegraphics[width=5.0cm]{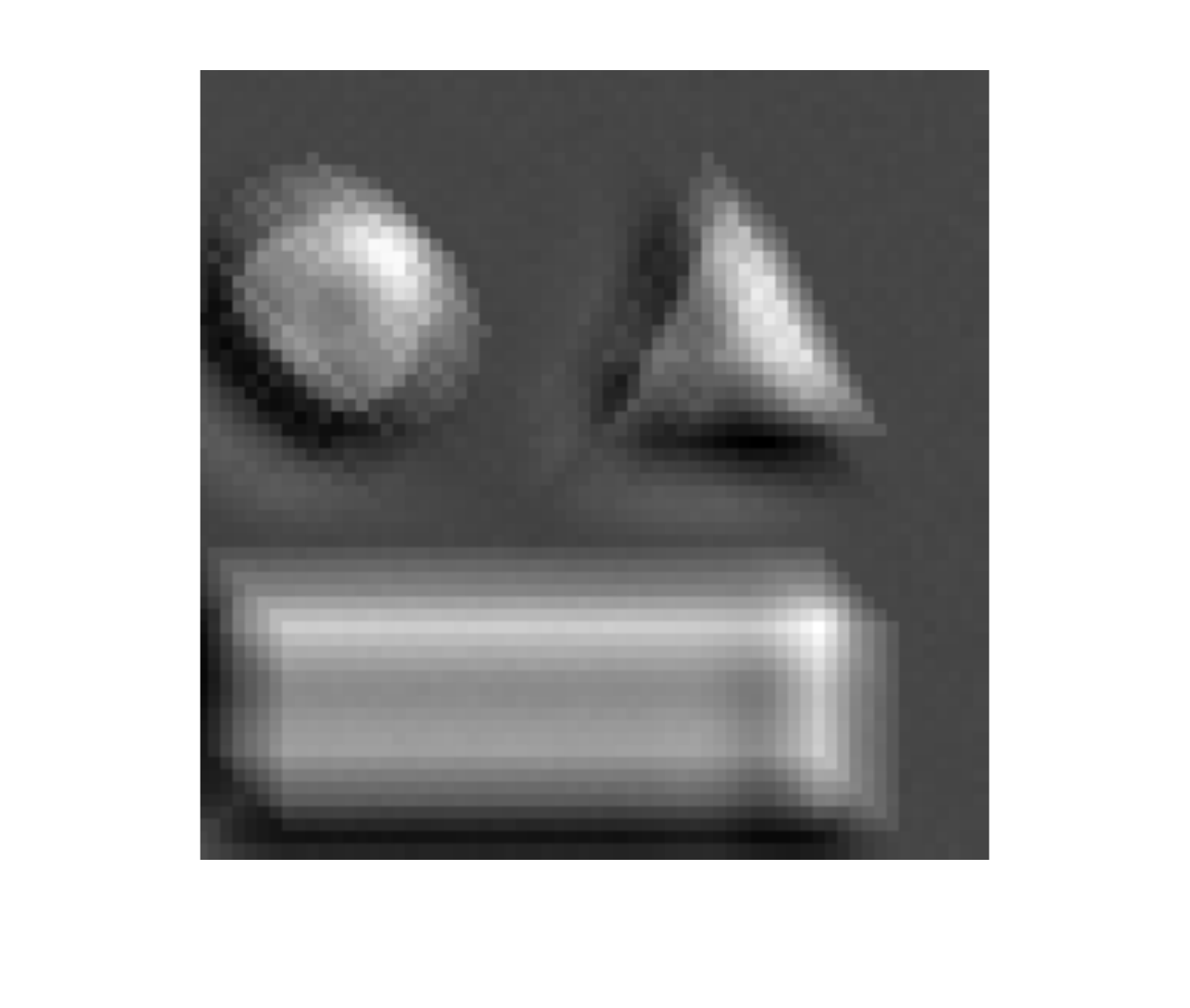} &
\hspace{-1.2cm}\includegraphics[width=5.0cm]{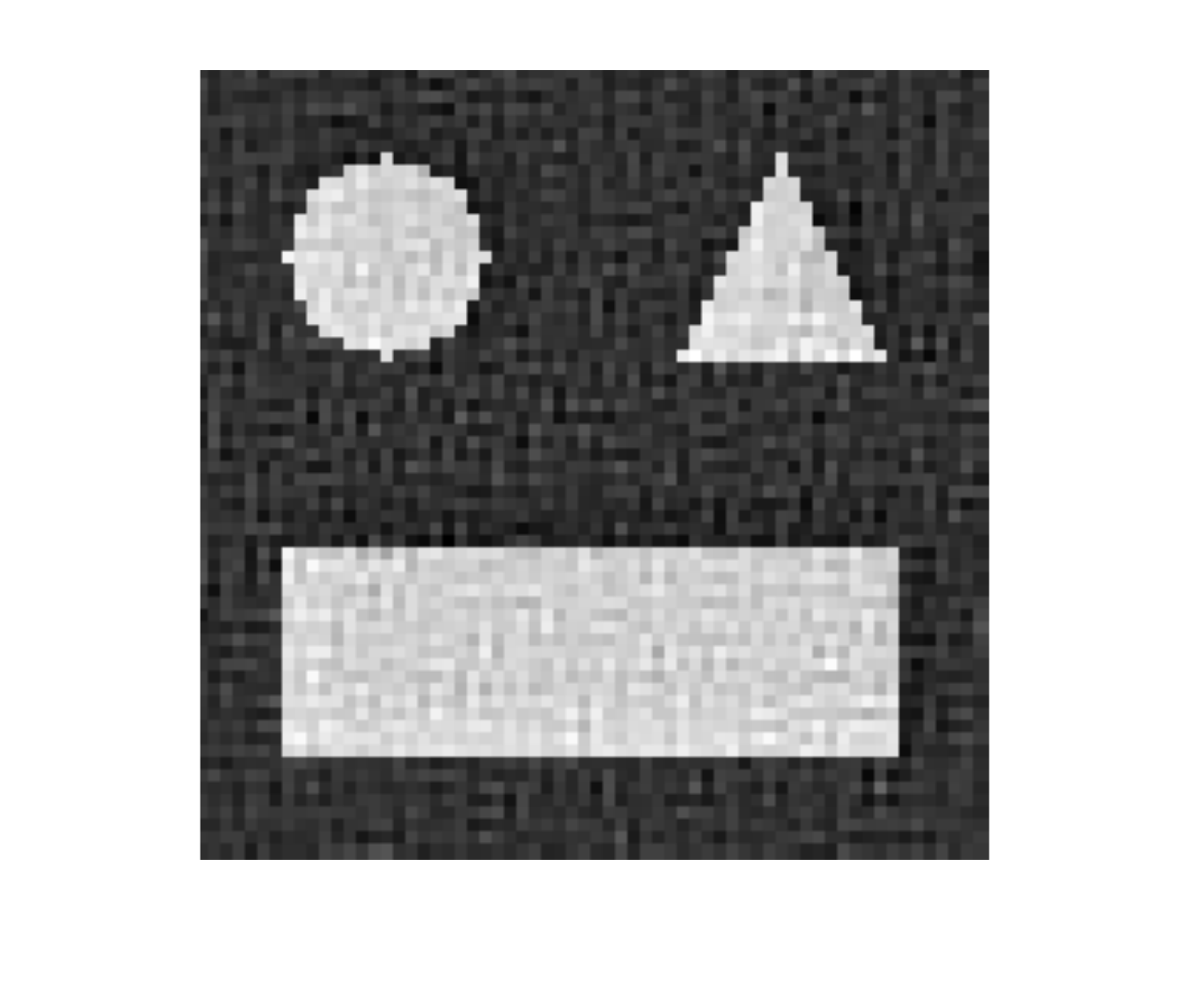} &
\hspace{-1.2cm}\includegraphics[width=5.0cm]{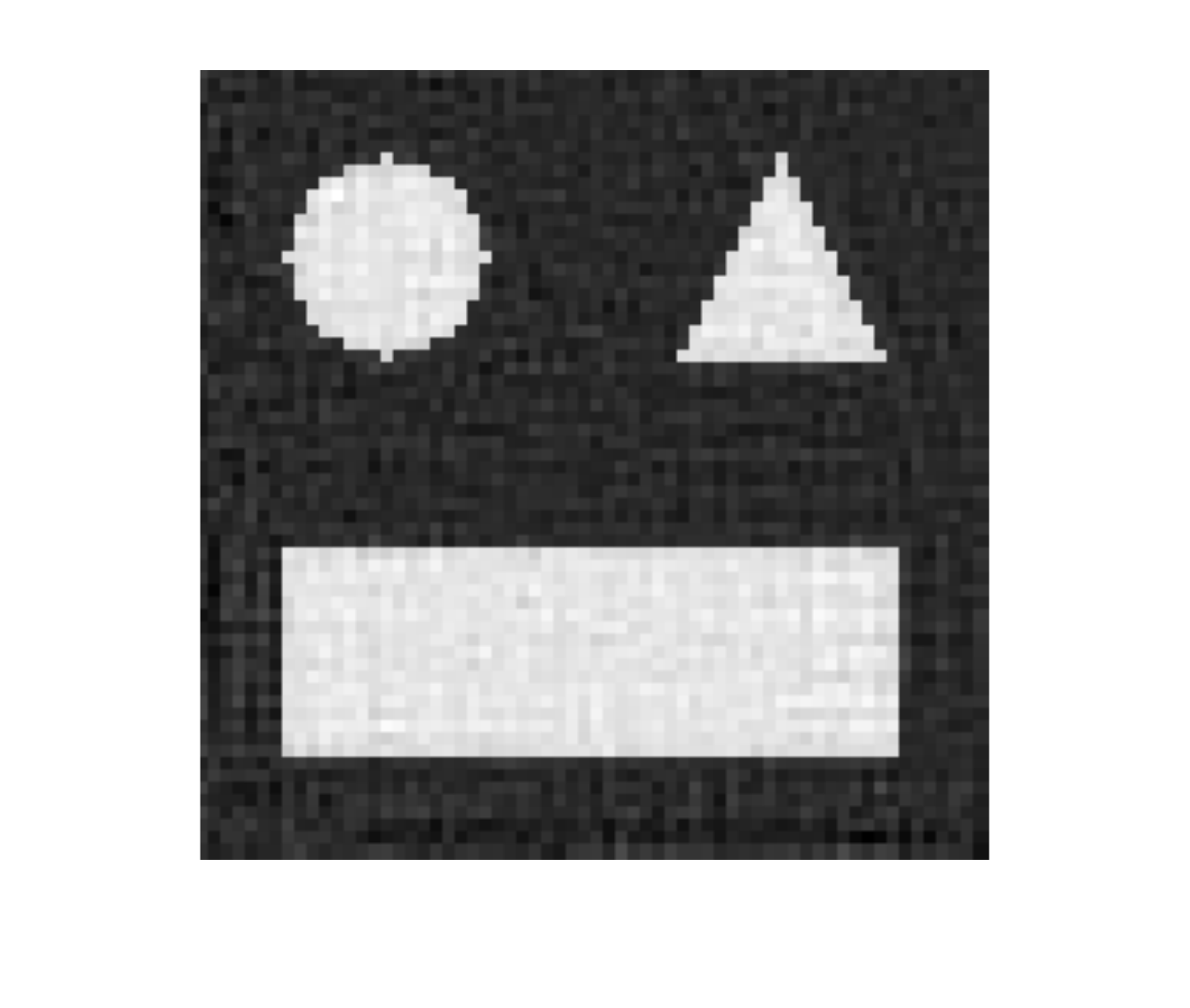}
\end{tabular}
\caption{Image deblurring problem with anisotropic motion blur. The upper row displays the test data. The lower row do spays the best reconstructions obtained by: GMRES method ($6.0804e-01$, $k=13$); GMRES($C_1$) method ($1.6452e-01$, $k=2$); GMRES($C_2$) method ($8.8234e-02$, $k=3$); GMRES($C_3$) method ($6.0316e-01$, $k=3$).}
\label{fig:geo}
\end{figure}

\texttt{Isotropic motion blur.} The test data for this experiment are displayed in Figure \ref{fig:boatset}. 
We consider a $17\times 17$ PSF modeling diagonal motion blur. The noise level is 
$5\cdot 10^{-3}$. Figure \ref{fig:boatrec} shows the best restorations achieved by
each method; relative errors and the corresponding number of iterations are displayed in 
the caption.
\begin{figure}[tbp]
\centering
\begin{tabular}{ccc}
\hspace{-1.2cm}\textbf{{\small {exact}}} &
\hspace{-1.2cm}\textbf{{\small {PSF}}} &
\hspace{-1.2cm}\textbf{{\small {corrupted}}} \\
\hspace{-1.2cm}\includegraphics[width=5.0cm]{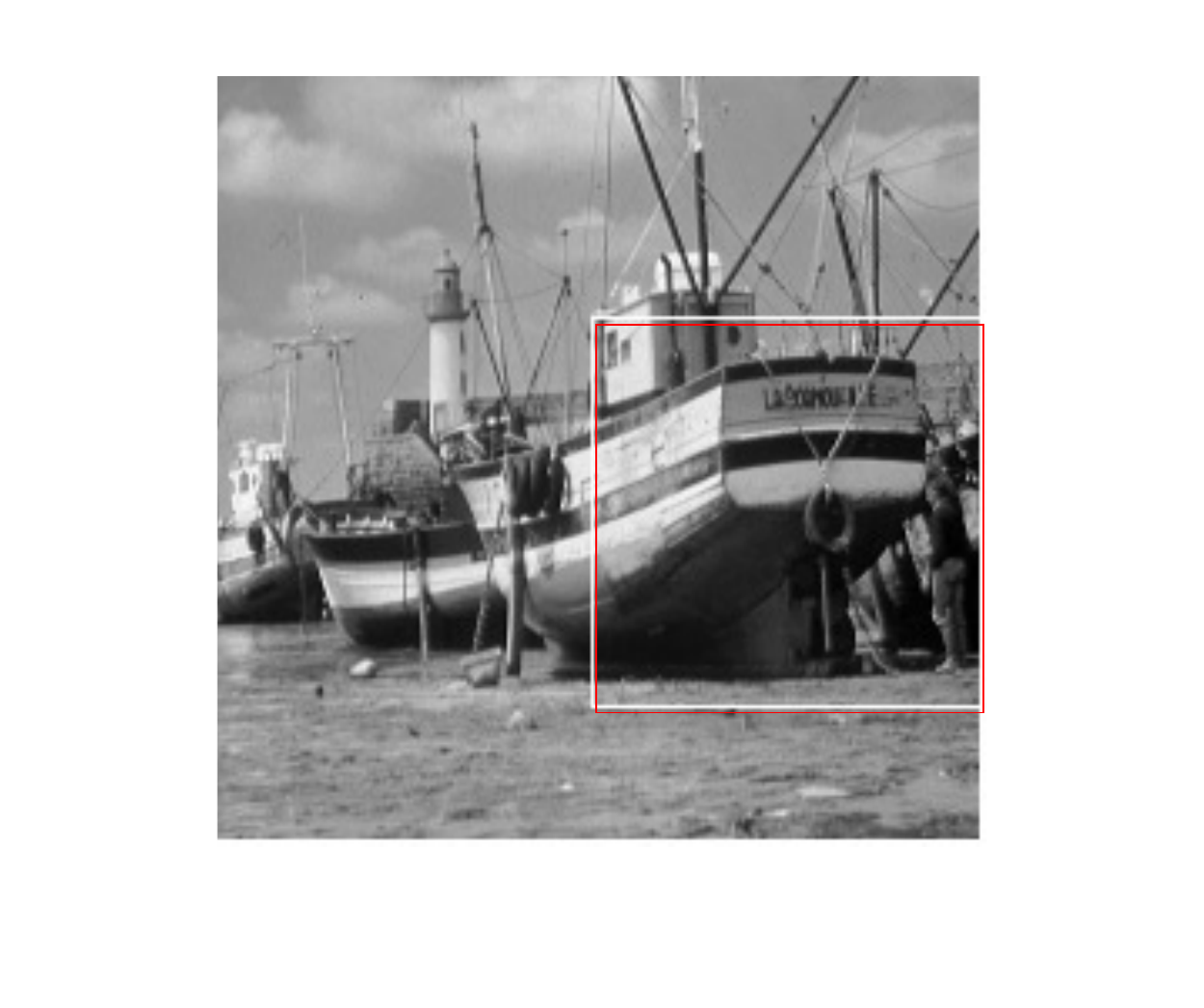} &
\hspace{-1.2cm}\includegraphics[width=5.0cm]{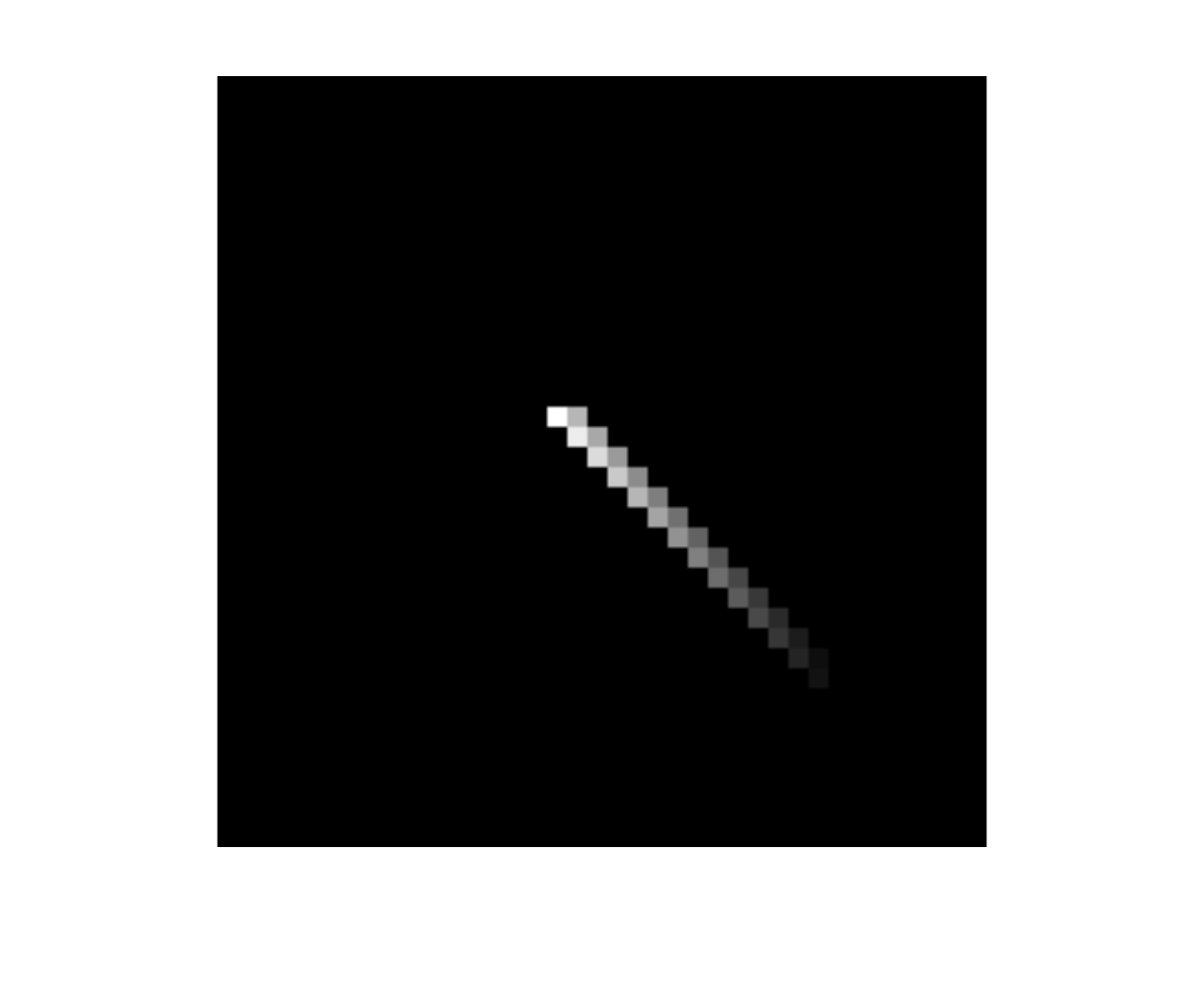} &
\hspace{-1.2cm}\includegraphics[width=5.0cm]{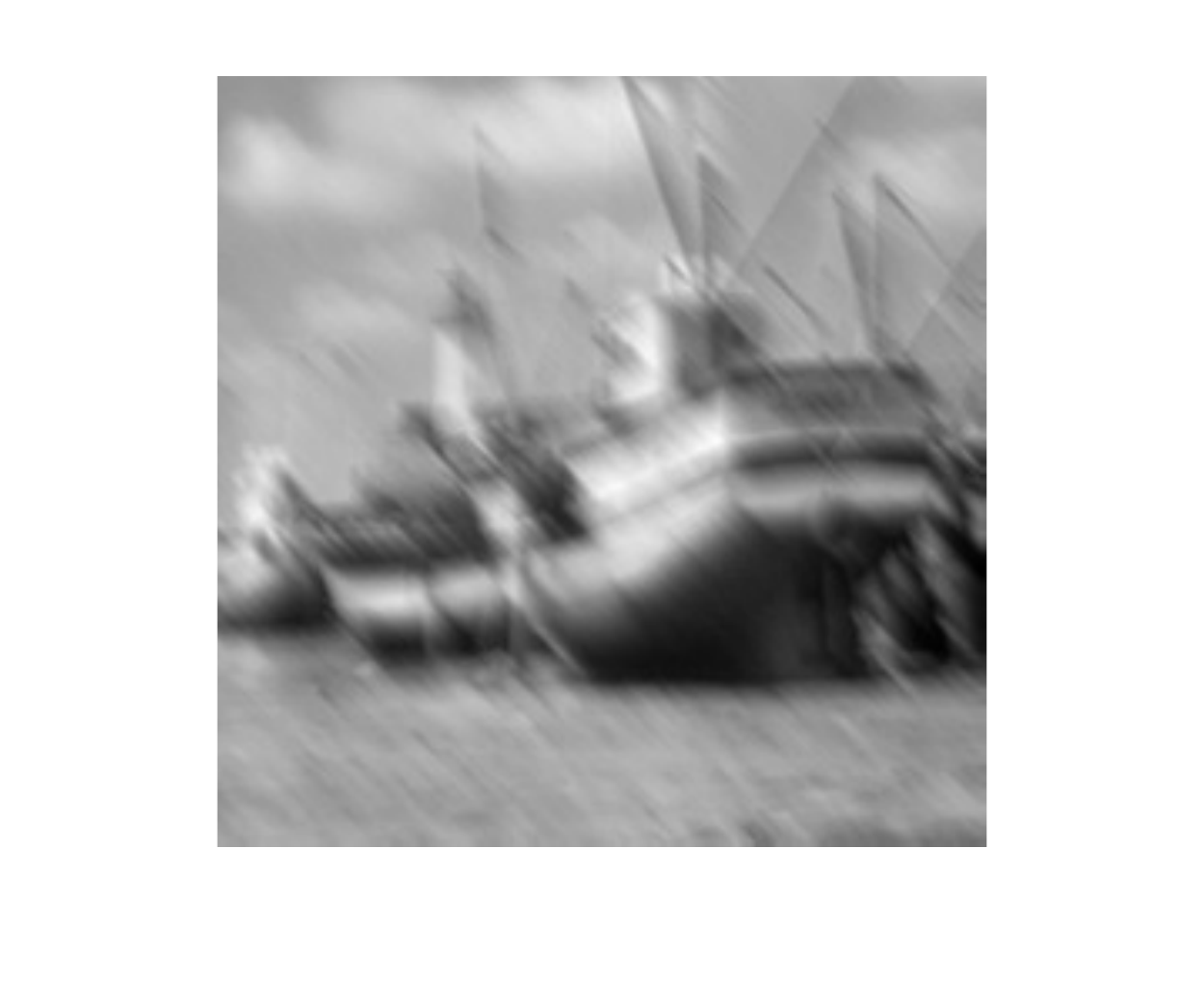}
\end{tabular}%
\caption{From left to right: exact image; blow-up ($600\%$) of the diagonal motion PSF; blurred and noisy available image, with $\|\be\|/\|\bb\|=5\cdot 10^{-3}$.}
\label{fig:boatset}
\end{figure}
\begin{figure}[tbp]
\centering
\begin{tabular}{ccc}
\hspace{-1.2cm}\textbf{{\small {TSVD}}} &
\hspace{-1.2cm}\textbf{{\small {TSVD($M_1$)}}} &
\hspace{-1.2cm}\textbf{{\small {TSVD($M_3$)}}} \\
\hspace{-1.2cm}\includegraphics[width=5.0cm]{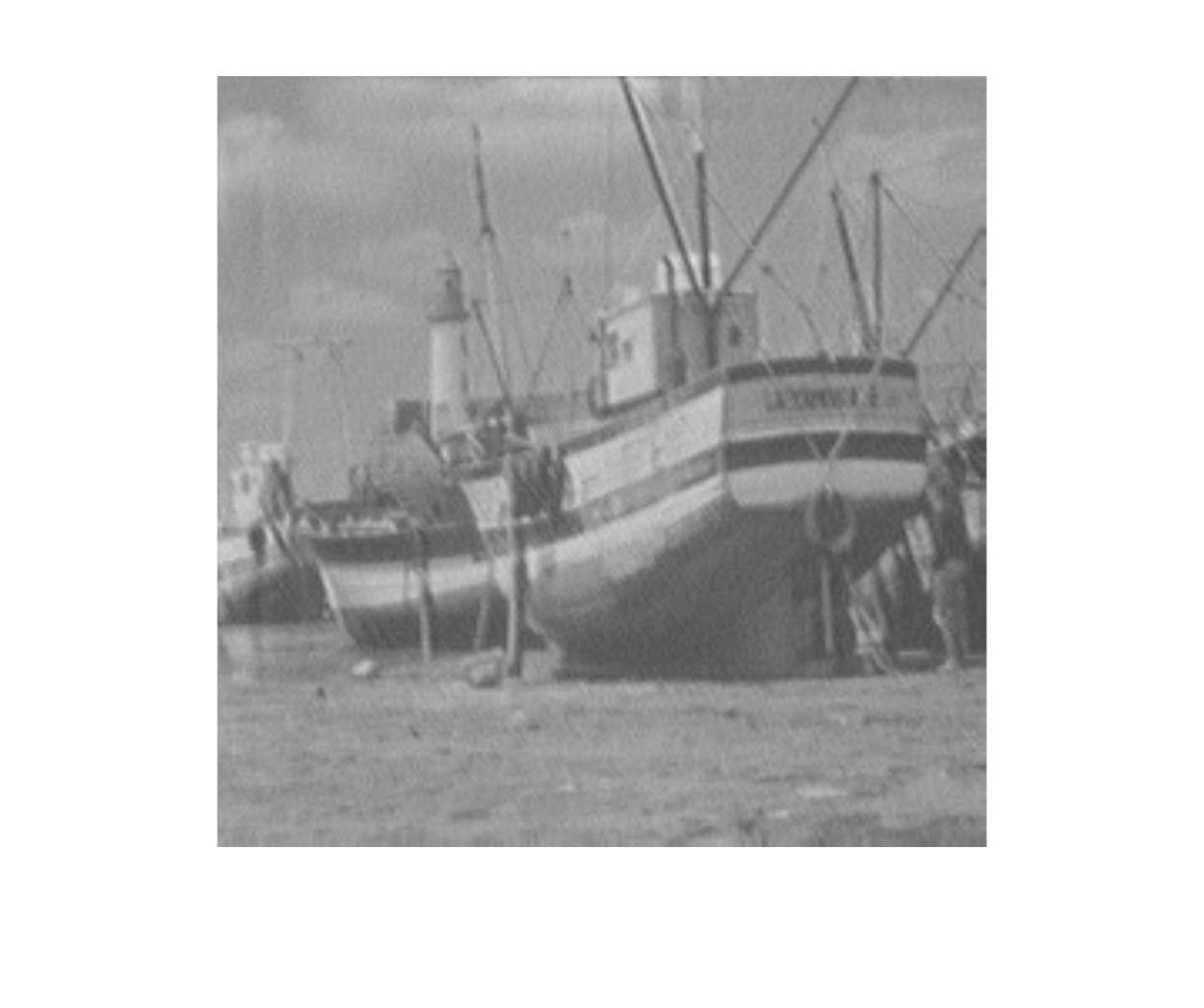} &
\hspace{-1.2cm}\includegraphics[width=5.0cm]{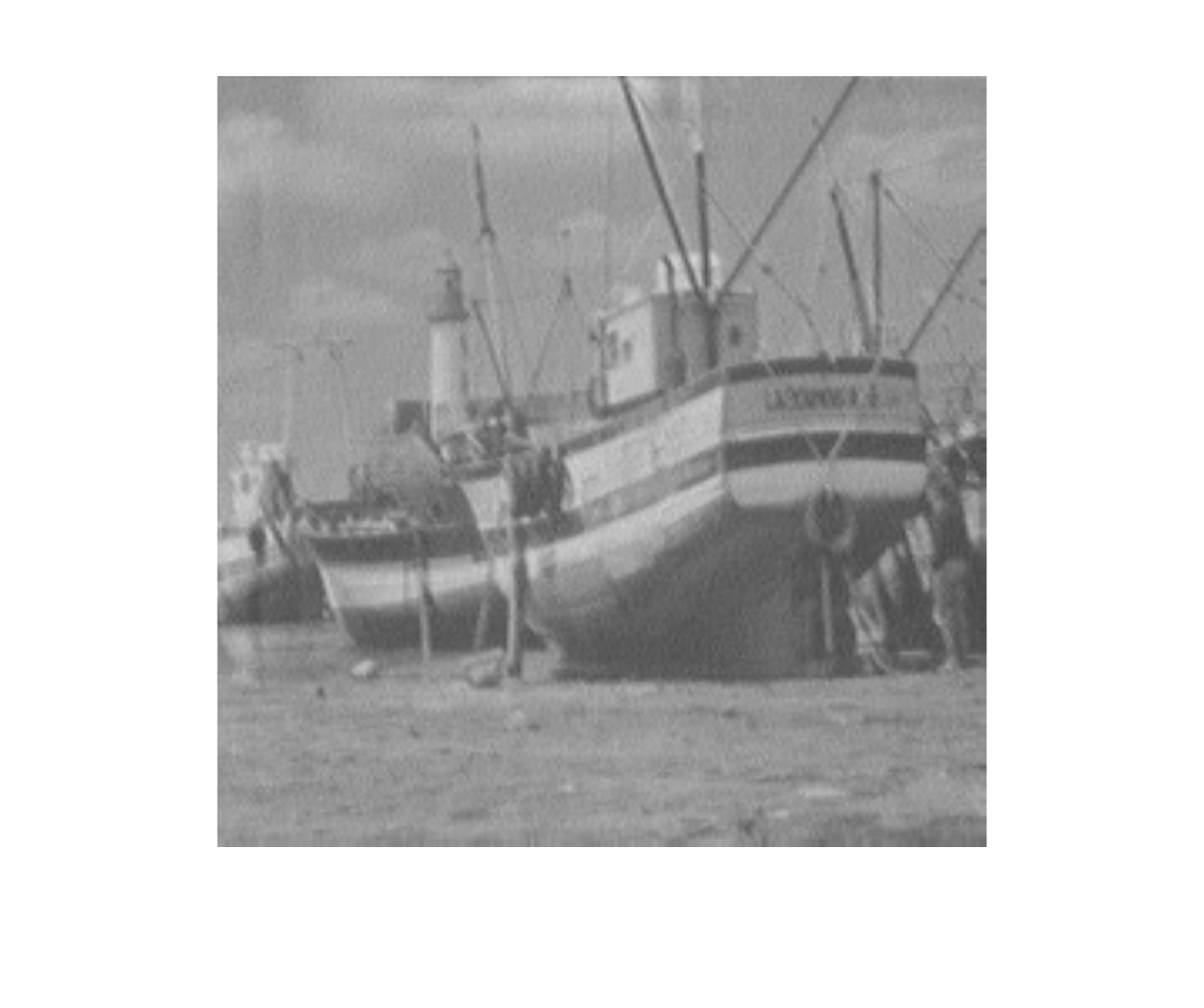} &
\hspace{-1.2cm}\includegraphics[width=5.0cm]{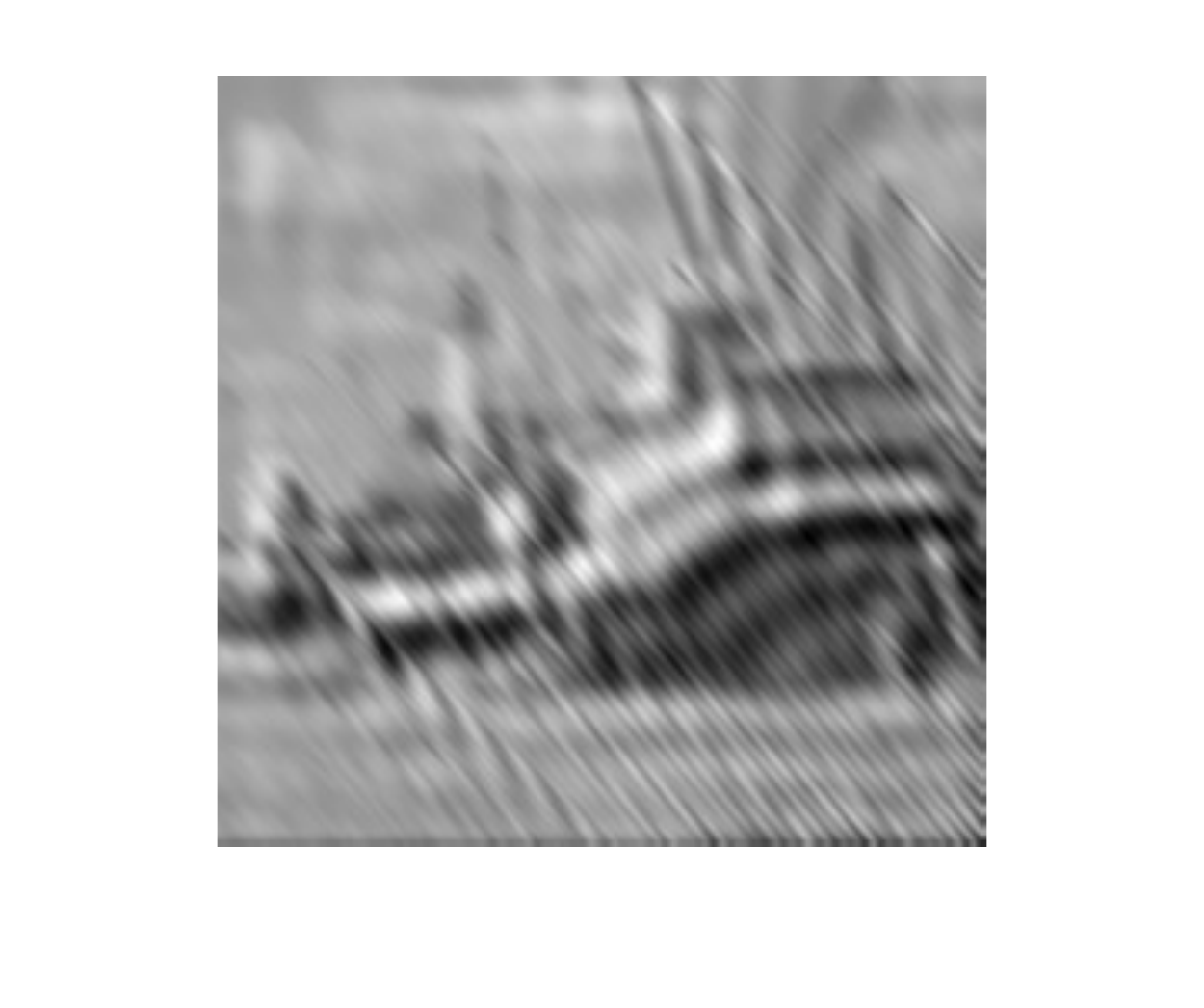}\vspace{-0.5cm}\\
\hspace{-1.2cm}\includegraphics[width=5.0cm]{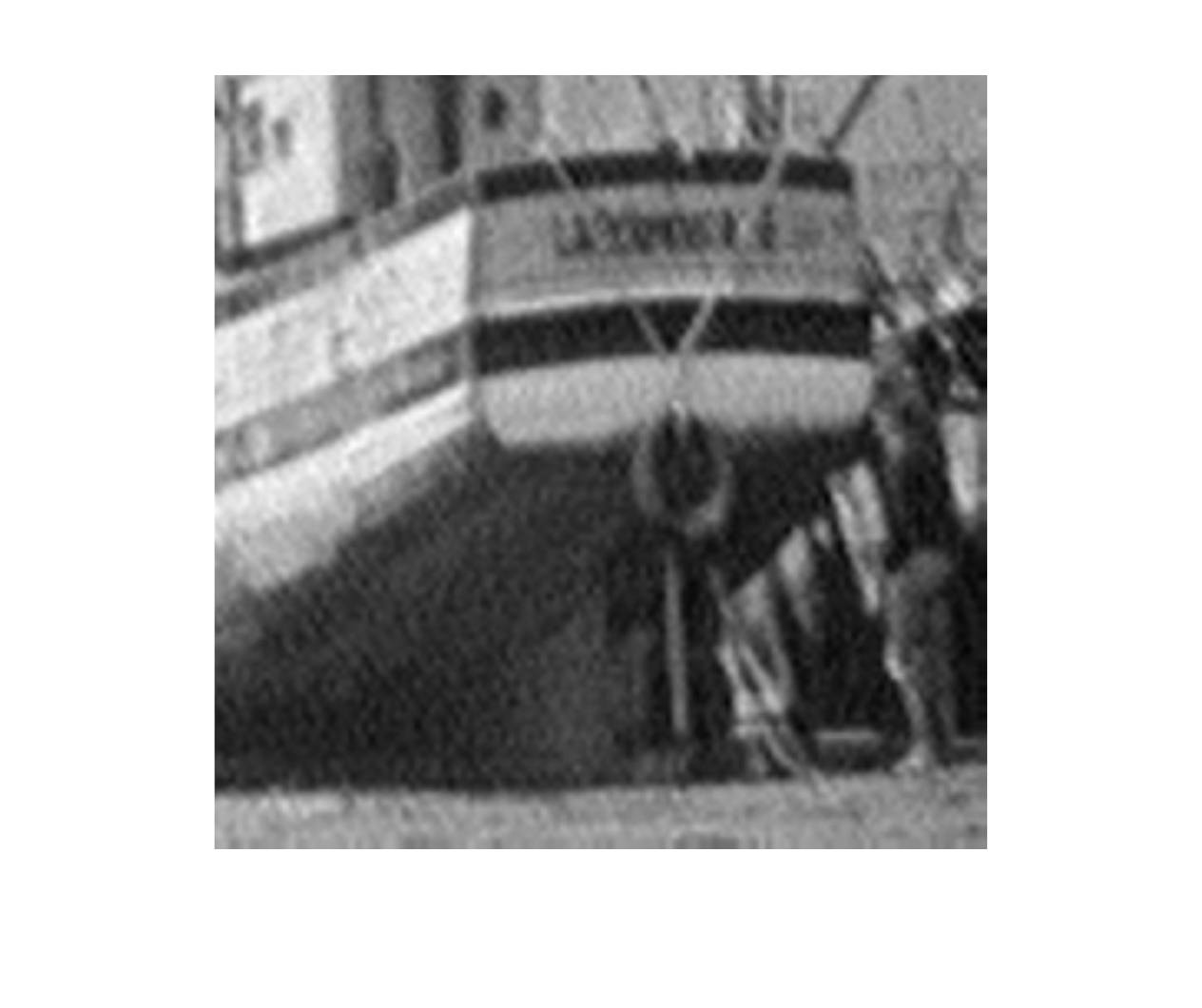} &
\hspace{-1.2cm}\includegraphics[width=5.0cm]{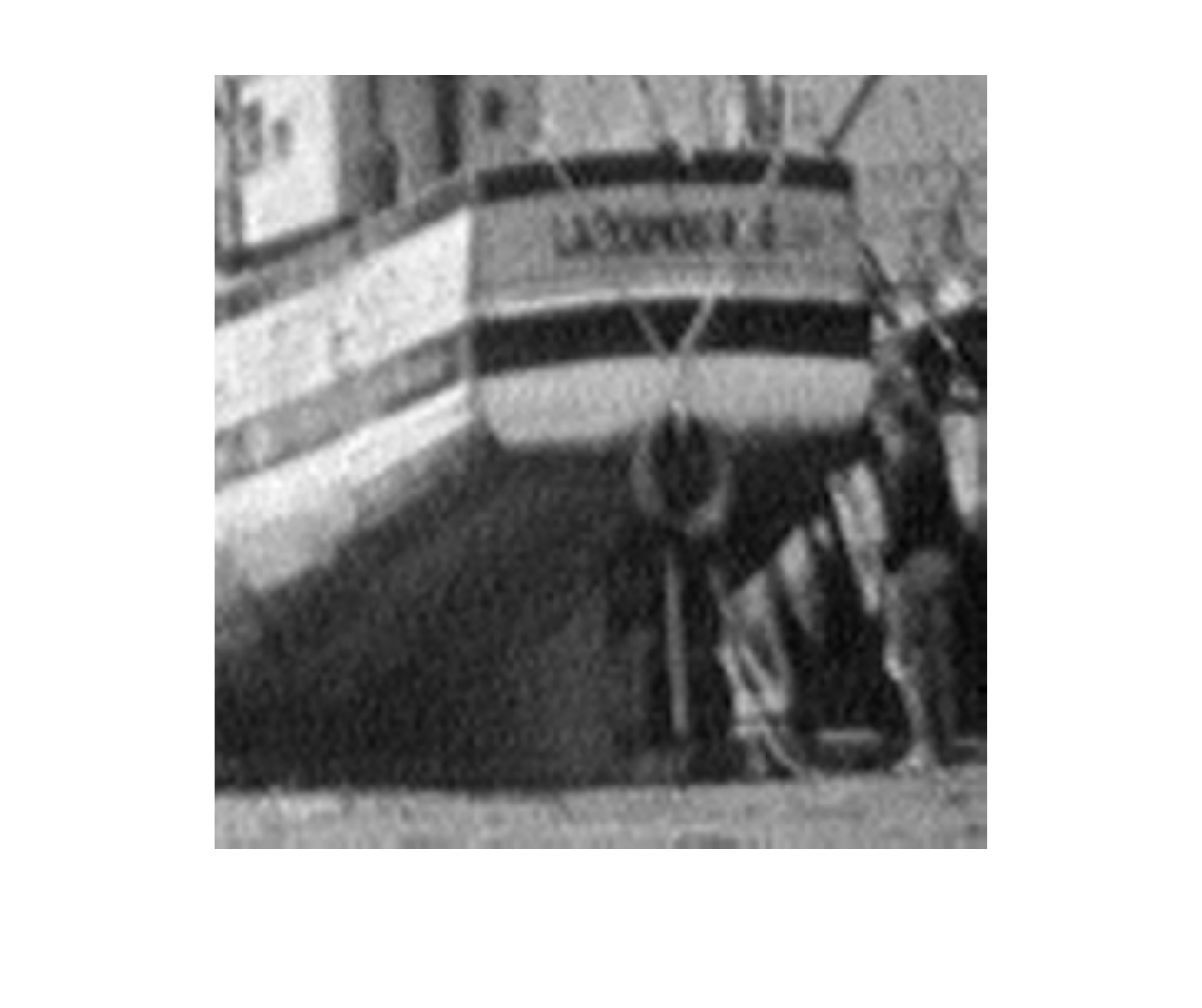} &
\hspace{-1.2cm}\includegraphics[width=5.0cm]{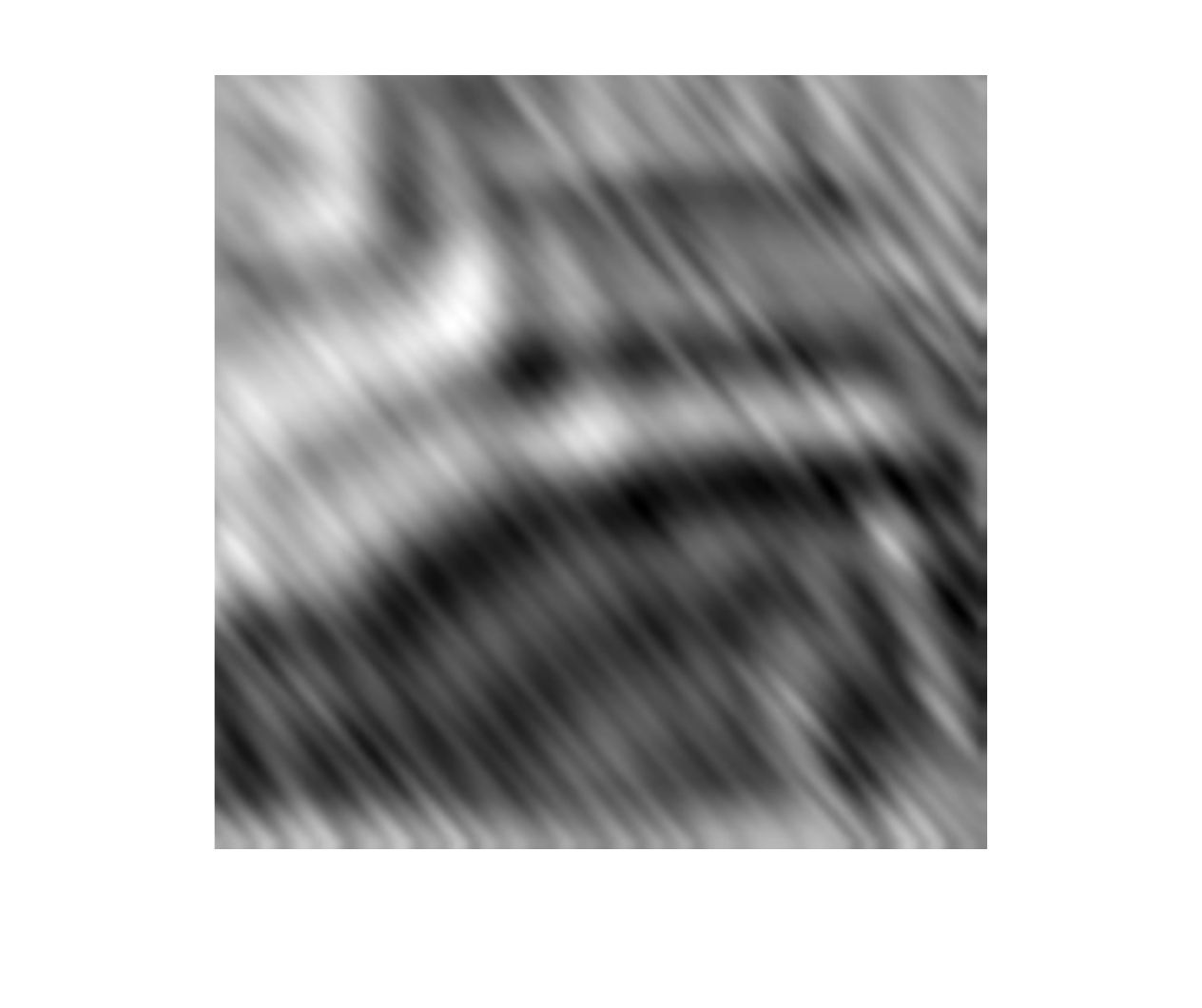}
\end{tabular}
\caption{The lower row displays blow-ups ($200\%$) of the restored images in the upper row. From left to right: unpreconditioned  Arnoldi-TSVD method ($1.0481e-01$, $k=26$); TSVD($M_1$) method ($1.0081e-01$, $\kP=50$, $k=7$); TSVD($M_3$) method ($2.5948e-01$, $\kP=50$, $k=35$).}
\label{fig:boatrec}
\end{figure}

All the methods carry out more iterations than in the previous example, due to the smaller amount 
of noise in the present example. Visual inspection of the images in Figure \ref{fig:boatrec} 
shows that the unpreconditioned Arnoldi-TSVD solution to bear some motion artifacts, as the restored 
image displays some shifts in the diagonal directions, i.e., in the direction of the motion blur. These 
spurious effects are not so pronounced in the TSVD($M_1$) restoration, as the preconditioner (\ref{prec1}) 
makes the problem more symmetric. The reconstruction produced by TSVD($M_3$) is noticeably worse; indeed, 
the preconditioner (\ref{prec3}) merely approximates a regularized inverse of $A$, and this is not desirable 
when applying the Arnoldi algorithm to a very unsymmetric blur. The results obtained when applying 
the Arnoldi--Tikhonov methods are very similar to the ones obtained with the Arnoldi-TSVD methods. We therefore
do not show the former. 

\section{Conclusions}\label{sec8}
This paper presents an analysis of the GMRES method and the Arnoldi algorithm with applications to the 
regularization of large-scale linear ill-posed problems. Theoretical properties that involve the distance 
of the original coefficient matrix to classes of generalized Hermitian matrices are derived. Novel 
preconditioners based on matrices stemming from the standard Arnoldi decomposition are introduced, and the 
resulting right-preconditioned linear systems are solved with methods based on the preconditioned Arnoldi 
algorithm, or the new preconditioned Arnoldi--Tikhonov and Arnoldi-TSVD methods. Numerical results on a 
variety of test problems clearly show the benefits of applying the new preconditioning techniques.


\begin{thebibliography}{99}
\bibitem{Ba}
M. L. Baart, The use of auto-correlation for pseudo-rank determination in 
noisy ill-conditioned least-squares problems, IMA J. Numer. Anal., 2 (1982), 
pp. 241--247.
\bibitem{Bj}
\AA . Bj\"orck, Numerical Methods in Matrix Computation, Springer, New York, 2015.
\bibitem{BW}
P. N. Brown and H. F. Walker, GMRES on (nearly) singular systems, SIAM J. Matrix Anal. 
Appl., 18 (1997), pp. 37--51.
\bibitem{CLRX}
D. Calvetti, B. Lewis, and L. Reichel, Restoration of images with spatially 
variant blur by the GMRES method, in Advanced Signal Processing Algorithms,
Architectures, and Implementations X, ed. F. T. Luk, Proceedings of the
Society of Photo-Optical Instrumentation Engineers (SPIE), vol. 4116, The
International Society for Optical Engineering, Bellingham, WA, 2000,
pp. 364--374.
\bibitem{CLR4}
D. Calvetti, B. Lewis, and L. Reichel, On the choice of subspace for
iterative methods for linear discrete ill-posed problems, Int. J. Appl. Math.
Comput. Sci., 11 (2001), pp. 1069--1092.
\bibitem{CLR5}
D. Calvetti, B. Lewis, and L. Reichel, Krylov subspace iterative methods for 
nonsymmetric discrete ill-posed problems in image restoration, in Advanced 
Signal Processing Algorithms, Architectures, and Implementations XI, ed. 
F. T. Luk, Proceedings of the Society of Photo-Optical Instrumentation 
Engineers (SPIE), vol. 4474, The International Society for Optical Engineering,
Bellingham, WA, 2001, pp. 224--233.
\bibitem{CLR1}
D. Calvetti, B. Lewis, and L. Reichel, On the regularizing properties of
the GMRES method, Numer. Math., 91 (2002), pp. 605--625.
\bibitem{CLR2}
D. Calvetti, B. Lewis, and L. Reichel, GMRES, L-curves, and discrete ill-posed 
problems, BIT, 42 (2002), pp. 44--65.
\bibitem{CMRS}
D. Calvetti, S. Morigi, L. Reichel, and F. Sgallari, Tikhonov regularization and the 
L-curve for large discrete ill-posed problems, J. Comput. Appl. Math., 123 (2000), 
pp. 423--446.
\bibitem{CR}
D. Calvetti and L. Reichel, Tikhonov regularization of large linear problems,
BIT, 43 (2003), pp. 263--283.
\bibitem{CRS}
D. Calvetti, L. Reichel, and A. Shuibi, Invertible smoothing preconditioners for linear 
discrete ill-posed problems, Appl. Numer. Math., 54 (2005), pp. 135--149.
\bibitem{CJ}
R. H.-F. Chan and X.-Q. Jin, An Introduction to Iterative Toeplitz Solvers, SIAM, 
Philadelphia, 2007.
\bibitem{TC}
T. F. Chan, An optimal circulant preconditioner for Toeplitz systems, SIAM J. Sci. 
Statist. Comput., 9 (1988), pp. 766--771.
\bibitem{CJ2}
T. F. Chan and K. R. Jackson, Nonlinearly preconditioned Krylov subspace methods for 
discrete Newton algorithms, SIAM J. Sci. Statist. Comput., 5 (1984), pp. 533--542.
\bibitem{Da}
P. J. Davis, Circulant Matrices, 2nd ed., Chelsea, New York, 1994.
\bibitem{DMR}
M. Donatelli, D. Martin, and L. Reichel, Arnoldi methods for image deblurring with 
anti-reflective boundary conditions, Appl. Math. Comput., 253 (2015), pp. 135--150.
\bibitem{DDTM}
K. Du, J. Duintjer Tebbens, and G. Meurant, Any admissible harmonic Ritz value set is 
possible for GMRES, Electron. Trans. Numer. Anal., 47 (2017), pp. 37--56.
\bibitem{DNR}
L. Dykes, S. Noschese, and L. Reichel, Circulant preconditioners for discrete ill-posed 
Toeplitz systems, Numer. Algorithms, 75 (2017), pp. 477--490.
\bibitem{DR}
L. Dykes and L. Reichel, A family of range restricted iterative methods for linear 
discrete ill-posed problems, Dolomites Research Notes on Approximation, 6 (2013), 
pp. 27--36.
\bibitem{Ei}
S. C. Eisenstat, Equivalence of Krylov subspace methods for skew-symmetric linear systems,
arXiv:1512.00311, 2015.
\bibitem{EHN}
H. W. Engl, M. Hanke, and A. Neubauer, Regularization of Inverse Problems, Kluwer, 
Dordrecht, 1996.
\bibitem{GNR1}
S. Gazzola, P. Novati, and M. R. Russo, Embedded techniques for choosing the parameter in
Tikhonov regularization, Numer. Linear Algebra Appl., 21 (2014), pp. 796--812.
\bibitem{GNR2}
S. Gazzola, P. Novati, and M. R. Russo, On Krylov projection methods and Tikhonov
regularization, Electron. Trans. Numer. Anal., 44 (2015), pp. 83--123. 
\bibitem{GR}
L. Greengard and V. Rokhlin, A new version of the fast multipole method for the Laplace
equation in three dimensions, Acta Numer., 6 (1997), pp. 229--269.
\bibitem{Hankebook}
M. Hanke, Conjugate Gradient Type Methods for Ill-Posed Problems, Longman, Harlow, 1995.
\bibitem{HNP}
M. Hanke, J. Nagy, and R. Plemmons, Preconditioned iterative regularization for ill-posed
problems, in L. Reichel, A. Ruttan, and R. S. Varga, eds., Numerical Linear Algebra, de
Gruyter, Berlin, 1993, pp. 141--163.
\bibitem{Ha} 
P. C. Hansen, Regularization Tools version 4.0 for Matlab 7.3, Numer. 
Algorithms, 46 (2007), pp. 189--194.
\bibitem{Hansenbook} P. C. Hansen, Rank-Deficient and Discrete Ill-Posed 
Problems, SIAM, Philadelphia, 1998.
\bibitem{HJ0}
P. C. Hansen and T. K. Jensen, Smoothing-norm preconditioning for regularizing 
minimum-residual methods, SIAM J. Matrix Anal., 29 (2006), pp. 1--14.
\bibitem{HJ}
P. C. Hansen and T. K. Jensen, Noise propagation in regularizing iterations 
for image deblurring, Electron. Trans. Numer. Anal., 31 (2008), pp. 204--220.
\bibitem{Hi}
N. J. Higham, Computing a nearest symmetric positive semidefinite matrix, Linear Algebra
Appl., 103 (1988), pp. 103--118.
\bibitem{Hu}
T. Huckle, The Arnoldi method for normal matrices, SIAM J. Matrix Anal. Appl., 15 (1994),
pp. 479--489.
\bibitem{JH}
T. K. Jensen and P. C. Hansen, Iterative regularization with minimal residual
methods, BIT, 47 (2007), pp. 103--120.
\bibitem{Ki}
S. Kindermann, Convergence analysis of minimization-based noise level-free parameter 
choice rules for linear ill-posed problems, Electron. Trans.  Numer. Anal., 38 (2011), 
pp. 233--257.
\bibitem{LR}
B. Lewis and L. Reichel, Arnoldi--Tikhonov regularization methods, J. Comput. Appl. Math.,
226 (2009), pp. 92--102.
\bibitem{LRT}
D. Loghin, D. Ruiz, and A. Touhami, Adaptive preconditioners for nonlinear systems of 
equations, J. Comput. Appl. Math., 189 (2006), pp. 362--374.
\bibitem{Mo}
I. Moret, A note on the superlinear convergence of GMRES, SIAM J. Numer. Anal., 34 (1997),
pp. 513--516.
\bibitem{NRT1}
N. M. Nachtigal, S. C. Reddy, and L. N. Trefethen, How fast are nonsymmetric matrix 
iterations?, SIAM J. Matrix Anal. Appl., 13 (1992), pp. 778--795.
\bibitem{NRT2}
N. M. Nachtigal, L. Reichel, and L. N. Trefethen, A hybrid GMRES algorithm for 
nonsymmetric linear systems, SIAM J. Matrix Anal. Appl., 13 (1992), pp. 796--825.
\bibitem{RestTools}
J. G. Nagy, K. M. Palmer, and L. Perrone, Iterative methods for image deblurring: a Matlab object 
oriented approach, Numer. Algorithms, 36 (2004), pp. 73--93.
\bibitem{NRS}
A. Neuman, L. Reichel, and H. Sadok, Implementations of range restricted iterative methods
for linear discrete ill-posed problems, Linear Algebra Appl., 436 (2012), pp. 3974--3990.
\bibitem{Ng}
M. K. Ng, Iterative Methods for Toeplitz Systems, Oxford University Press, Oxford, 2004.
\bibitem{NPR1}
S. Noschese, L. Pasquini, and L. Reichel, The structured distance to normality of an 
irreducibel real tridiagonal matrix, Electron. Trans. Numer. Anal., 28 (2007), pp. 65--77.
\bibitem{NPR2}
S. Noschese, L. Pasquini, and L. Reichel, Tridiagonal Toeplitz matrices: properties and 
novel applications, Numer. Linear Algebra Appl., 20 (2013), pp. 302--326.
\bibitem{NR11}
S. Noschese and L. Reichel, The structured distance to normality of Toeplitz matrices with
application to preconditioning, Numer. Linear Algebra Appl., 18 (2011), pp. 429--447.
\bibitem{NR0}
S. Noschese and L. Reichel, A modified TSVD method for discrete ill-posed problems,
Numer. Linear Algebra Appl., 21 (2014), pp. 813--822.
\bibitem{NR014} 
S. Noschese and L. Reichel, A note on superoptimal generalized circulant preconditioners,
Appl. Numer. Math., 75 (2014), pp. 188--195.
\bibitem{N17}
P. Novati, Some properties of the Arnoldi based methods for linear ill-posed problems, SIAM J. Numer. Anal. 55 (2017), pp. 1437--1455.
\bibitem{NR14}
P. Novati and M. R. Russo, A GCV based Arnoldi--Tikhonov regularization method, BIT, 54 
(2014), pp. 501--521.
\bibitem{PS0}
C. C. Paige and M. A. Saunders, Solution of sparse indefinite systems of
linear equations, SIAM J. Numer. Anal., 12 (1975), pp. 617--629.
\bibitem{PS}
C. C. Paige and M. A. Saunders, LSQR: An algorithm for sparse linear equations and sparse
least squares, ACM Trans. Math. Software, 8 (1982), pp. 43--71.
\bibitem{Ph}
D. L. Phillips, A technique for the numerical solution of certain integral equations of 
the first kind, J. ACM, 9 (1962), pp. 84--97.
\bibitem{RR}
L. Reichel and G. Rodriguez, Old and new parameter choice rules for discrete ill-posed 
problems, Numer. Algorithms, 63 (2013), pp. 65--87.
\bibitem{RY}
L. Reichel and Q. Ye, Breakdown-free GMRES for singular systems, SIAM J.  Matrix Anal. 
Appl., 26 (2005), pp. 1001--1021.
\bibitem{RY2}
L. Reichel and Q. Ye, Simple square smoothing regularization operators, Electron. Trans. 
Numer. Anal., 33 (2009), pp. 63--83.
\bibitem{Ri}
J. R. Ringrose, Compact Non-Self-Adjoint Operators, Van Nostrand Reinhold, London, 1971.
\bibitem{Saadbook}
Y. Saad, Iterative Methods for Sparse Linear Systems, 2nd ed., SIAM, Philadelphia, 2003. 
\bibitem{SS}
Y. Saad and M. H. Schultz, GMRES: a generalized minimal residual method for solving 
nonsymmetric linear systems, SIAM J. Sci. Stat. Comput., 7 (1986), pp. 856--869.
\bibitem{Sh}
C. B. Shaw, Jr., Improvements of the resolution of an instrument by numerical 
solution of an integral equation, J. Math. Anal. Appl., 37 (1972), pp.  
83--112.
\bibitem{ST} V. V. Strela and E. E. Tyrtyshnikov, Which circulant preconditioner is 
better?, Math. Comp., 65 (1996), pp. 137--150.
\bibitem{Ty}
E. E. Tyrtyshnikov,  Optimal and superoptimal circulant preconditioners, SIAM J. Matrix Anal.
Appl., 13 (1992),  pp. 459--473.
\end{thebibliography}
\end{document}